\newtheorem{theorem}{Theorem}[section]
\newtheorem{lemma}[theorem]{Lemma}
\newtheorem{remark}[theorem]{Remark}
\newtheorem{proposition}[theorem]{Proposition}
\newtheorem{definition}[theorem]{Definition}
\newtheorem{corollary}[theorem]{Corollary}
\numberwithin{equation}{section}
\title{\bf On  Invariance of Observability for BSDEs and Its Applications to
  Stochastic Control Systems\thanks{This work was partially supported by National Natural Science Foundation of China
  under grant nos. 12131008 and U23B2033. }}
\author{Bao-Zhu Guo\thanks{Academy of Mathematics and Systems Science, Academia Sinica, China (bzguo@iss.ac.cn)}\and Huaiqiang Yu\thanks{School of Mathematics and KL-AAGDM, Tianjin University, Tianjin 300354, China (huaiqiangyu@tju.edu.cn, huaiqiangyu@yeah.net)}\and Meixuan Zhang\thanks{School of Mathematics and KL-AAGDM, Tianjin University, Tianjin 300354, China (zhangmx12023@163.com)}}
	\date{}
\begin{document}
\selectlanguage{english}
\maketitle

\begin{abstract}
     In this paper, we establish the  invariance of observability for the observed backward stochastic differential equations (BSDEs) with constant coefficients, relative to the filtered probability space. This signifies that the observability of these observed BSDEs with constant coefficients remains unaffected by the selection of the filtered probability space. As an illustrative application, we demonstrate that for stochastic control systems with constant coefficients, weak observability, approximate null controllability with cost, and stabilizability are equivalent across some or any filtered probability spaces.
\vskip 5pt
\noindent{\bf Keywords.} Stochastic control system, backward stochastic differential equation, observability, approximate null controllability with cost, stabilizability

\vskip 5pt

\noindent{\bf AMS subject classifications.} 93E03, 93B07, 93B05, 93D15

\end{abstract}

\section{Introduction}

     In this paper, we establish the invariance of observability for Backward Stochastic Differential Equations (BSDEs) under changes to the filtered probability space. This property fundamentally relies on the uniqueness of weak solutions (which is one kind of uniqueness in the sense of probability law) for Stochastic Differential Equations (SDEs) and forward-backward stochastic differential equations (FBSDEs).

The uniqueness of weak solutions is a pivotal and classical topic in the theory of stochastic differential equations. Initially, the uniqueness of weak solutions for SDEs was proven in \cite{Yamada-Watanabe-1971}, where the authors demonstrated that pathwise uniqueness of strong solutions implies uniqueness of weak solutions. For additional references in this field, one may refer, for example, to
\cite{Bass-Perkins-2012, Gomez-Lee-Mueller-Neuman-Salins-2017, Graczyk-Malecki-2013, Gkurtz-2007, Rocker-Schmuland-Zhang}, and the references  therein. With the progression of BSDE and FBSDE theories, the uniqueness of their weak solutions has been extensively studied over the past two decades. Notable references in this domain include \cite{Buckdahn-Engelbert-Rascanu-2005, Ma-Zhang-2011, Ma-Zhang-Zheng-2008, Yong-2023}, and the related citations. To our knowledge, no literature has yet utilized these uniqueness results to investigate the invariance of (weak or initial) observability of BSDEs concerning the filtered probability space. However, this invariance is crucial in characterizing the stabilizability of stochastic control system  through observability.

For nearly three decades, controllability and  stabilizability are pivotal topics  in control theory, particularly within the realm of stochastic control systems (refer to, for instance, \cite{ Lv-Yong-Zhang-2012, Lv-Zhang, Peng-1994, Rami-Zhou-2000, Sun-Yong-2017, Willems-Willems-1976, Zhang-Chen-2004}, and the associated references). Specifically, in \cite[Chapter 6]{Lv-Zhang}, the authors conducted a comparative analysis between deterministic and stochastic systems and compiled numerous existing findings on the controllability of stochastic control systems. These results indicate that the controllability of stochastic control systems remains an area of ongoing research. Indeed, due to the influence of random disturbances, stochastic control systems inherently possess infinite-dimensional characteristics. Consequently, determining whether such systems possess criteria analogous to the Kalman rank condition (pertinent to deterministic control systems) emerges as an intriguing and significant problem.
 Unfortunately,  such criteria are only applicable to certain specialized stochastic control systems (see \cite[Chapter 6, Theorems 6.8 and 6.10]{Lv-Zhang}). It is noteworthy that in \cite{Lv-Zhang}, the authors primarily examined null controllability and exact controllability. Hence, exploring more relaxed notions of controllability may offer enhanced algebraic characterizations. Recently, in \cite{Trelat-Wang-Xu}, the authors introduced the concept of approximate null controllability with cost to delineate the exponential stabilizability of deterministic infinite-dimensional control systems (also addressed in \cite{Liu-Wang-Xu-Yu, Ma-Wang-Yu}). This result motivates us to characterize the approximate null controllability with cost of stochastic control systems through their stabilizability.

     The primary contributions and novelties  of this paper can be summarized as follows: a) We demonstrate that,  for a BSDE  with constant coefficients, its (weak and initial) observability is independent of the chosen filtered probability space; b) We introduce the concept of approximate null controllability with cost and establish its equivalence to the corresponding stabilizability of the stochastic control system. Consequently, the stabilizability of the stochastic control system, which is inherently a property extending to infinite horizons, can be translated into a property confined to finite horizons: approximate null controllability with cost and  the conclusions on the the stabilizability of the stochastic control system con be used to characterize its approximate null controllability with cost; c) We transform the notion of approximate controllability with cost for stochastic control systems across any filtered probability spaces into the weak observability of the dual-observation system within a fixed filtered probability space. Furthermore, we present an algebraic condition to the approximate null controllability with cost associated with this transformed property.

    The structure of this paper is organized as follows: In Section \ref{Se2}, we introduce preliminary results that form the basis for our subsequent analyses. Section \ref{Se3} presents the problem formulation and the main results. Sections \ref{Se4} and \ref{Se5} are dedicated to proving the main results. Lastly, Section \ref{yu-sec-06} contains the Appendix, featuring any additional or supplementary materials relevant to our discussion.

{\bf Notations.}
    Let $\mathbb{R}^+:=(0,+\infty)$ and  $\mathbb{N}^+:=\mathbb{N}\setminus\{0\}$.
      Given
     a fixed measurable space $(\Omega,\mathcal{F})$, when a probability measure $\mathbb{P}$ is specified, we denote the probability space by $(\Omega,\mathcal{F},\mathbb{P})$ and  the expectation by  $E_{\mathbb{P}}$.  Suppose $w=(w^1,w^2,\cdots,w^d)^\top$ is a $d$-dimensional standard Brownian motion on $(\Omega,\mathcal{F},\mathbb{P})$. Let $\{\mathcal{F}^w_t\}_{t\geq 0}$
       be the natural filtration generated by $w(\cdot)$.  For simplicity,  we denote the filtered probability space
      $(\Omega,\mathcal{F},
    \{\mathcal{F}^w_t\}_{t\geq 0},\mathbb{P})$ by  $\mathfrak{T}(w,\mathbb{P})$.  Define
     $$
    L^2_{\mathcal{F}^w_T}(\Omega,\mathbb{P};\mathbb{R}^n):
    =\left\{f:\Omega\to \mathbb{R}^n: f
    \;\mbox{is}\;\mathcal{F}^w_T\mbox{-measurable}, E_{\mathbb{P}}
    |f|^2_{\mathbb{R}^n}<+\infty\right\}.
    $$
    For an interval $I\subset[0,+\infty)$, let
\begin{equation*}\label{yu-b-6-28-1}
    L^2_{\mathfrak{T}(w,\mathbb{P})}(I;\mathbb{R}^n):=\left\{\xi\in L^2(\Omega\times I,\mathbb{P}\times \lambda;\mathbb{R}^n):\xi
    \;\mbox{is}\;\{\mathcal{F}^w_t\}_{t\geq 0}\mbox{-progressively measurable}\right\},
\end{equation*}
\begin{equation*}\label{yu-b-6-28-2}
    C_{\mathfrak{T}(w,\mathbb{P})}(I;\mathbb{R}^n):=\left\{\xi:I\times\Omega
    \to\mathbb{R}^n:\xi
    \;\mbox{is}\;\{\mathcal{F}^w_t\}_{t\geq 0}\mbox{-adapted, contunous from}\;I\;\mbox{to}\; L^2_{\mathcal{F}^w_T}(\Omega,\mathbb{P};\mathbb{R}^n)\right\}.
\end{equation*}
     Here and throughout the remainder of this paper,  $\lambda$ denotes the Lebesgue measure on $\mathbb{R}$. It is well-known that $L^2_{\mathfrak{T}(w,\mathbb{P})}(I;\mathbb{R}^n)$
     equipped with the norm $\|\xi\|_{L^2_{\mathfrak{T}(w,\mathbb{P})}(I;\mathbb{R}^n)}:=\big(E_{\mathbb{P}}
    \int_I|\xi(t)|^2_{\mathbb{R}^n}dt\big)^{\frac{1}{2}}$
     forms a Hilbert space. Furthermore, for every  $T>0$, $C_{\mathfrak{T}(w,\mathbb{P})}([0,T];\mathbb{R}^n)$
    endowed with  norm $\|\xi\|_{C_{\mathfrak{T}(w,\mathbb{P})}([0,T];\mathbb{R}^n)}
    :=(\sup_{t\in[0,T]}E_{\mathbb{P}}|\xi(t)|^2_{\mathbb{R}^n})^{\frac{1}{2}}$,  constitutes a Banach space.  For a given matrix  $M$, $M^\top$
  denotes its transposition and $M_j$   represents its $j$-th column. We define
  $\mathbb{S}^n:=\{M\in\mathbb{R}^{n\times n}: M=M^\top,\;M\geq 0\}$ and   $\mathbb{S}^n_+:=\{M\in\mathbb{R}^{n\times n}: M=M^\top,\;M> 0\}$.
       For  $a,b\in\mathbb{R}$, we define $a\vee b=\max\{a,b\}$ and $a\wedge b=\min\{a,b\}$ . Throughout this paper, $c(\cdots)$  denotes a positive constant that depends on the elements within the brackets, which may vary across different lines.

      Let $\mathfrak{T}(w,\mathbb{P})$  be fixed arbitrarily. Throughout this paper,  we denote by $[A,B,C,D,\mathfrak{T}(w,\mathbb{P})]$ the following stochastic control system on $\mathfrak{T}(w,\mathbb{P})$:
\begin{equation}\label{yu-6-14-1}
    dx(t)=(Ax(t)+Bu(t))dt+\sum_{i=1}^d(C_ix(t)+D_iu(t))dw^i(t),\;\;t\in\mathbb{R}^+,
\end{equation}
    where $u\in L^2_{\mathfrak{T}(w,\mathbb{P})}(\mathbb{R}^+;\mathbb{R}^m)$, $A\in\mathbb{R}^{n\times n}$, $B\in\mathbb{R}^{n\times m}$, $C:=\{C_i\}_{i=1}^d$ and $D:=\{D_i\}_{i=1}^d$ with $C_i\in\mathbb{R}^{n\times n}$ and $D_i\in\mathbb{R}^{n\times m}$, respectively, for each $i\in\{1,2,\ldots,d\}$.    It is well-known that, given the initial value  $x_0$ (i.e. $x(0)=x_0$) and the control $u$, the equation \eqref{yu-6-14-1}    admits a unique solution in  $C_{\mathfrak{T}(w,\mathbb{P})}(\mathbb{R}^+;\mathbb{R}^n)$. We denote this unique solution by $x(\cdot;u,x_0)$ or $x^w(\cdot;u,x_0)$  to emphasize its dependence on the filtered probability space.
   \section{Preliminaries}\label{Se2}

In this section,  we present some fundamental results pertaining to BSDEs, including the explicit expression of a certain type of FBSDEs. Additionally, we discuss the  Linear-Quadratic (LQ, for simplicity) problems over infinite horizons.

\subsection{Backward shift property  for BSDEs}

    Let $T>0$ and fix an arbitrary filtered probability space $\mathfrak{T}(w,\mathbb{P})$.
    For any $k\in\mathbb{N}^+$, define  $W^d(0,kT):=C([0,kT];\mathbb{R}^d)$ (with the norm of $\sup_{t\in[0,kT]}|f(t)|_{\mathbb{R}^d}$ for $f\in W^d(0,kT)$) and $W^d_t(0,kT):=
    \{\xi( t\wedge\cdot):\xi(\cdot)\in W^d(0,kT)\}$ for all  $ t\in [0,kT]$.  For every  $t\in[0,kT]$, denote the set of all Borel subsets of
   $W^d_t(0,kT)$ by $\mathcal{B}(W^d_t(0,kT))$  and let $\mathcal{B}_t(W^d(0,kT)):=\sigma(\mathcal{B}(W^d_t(0,kT))$
    be the $\sigma$-field generated by $\mathcal{B}(W^d_t(0,kT))$.  Define
    $\mathcal{B}_{t+}(W^d(0,kT)):=\cap_{s>t} \mathcal{B}_s(W^d(0,kT))$
     and introduce the following set:
 \begin{equation}\label{yu-6-27-00}
  \mathcal{A}_{kT}^{d,n}:=\left\{\eta:[0,kT]\times W^d(0,kT)\to \mathbb{R}^n: \eta\;\mbox{is}\;
    \left\{\mathcal{B}_{t+}(W^d(0,kT))\right\}_{t\geq 0}\mbox{-progressively measurable}\right\}.
\end{equation}

\begin{lemma}\label{yu-lemma-6-27-1}
    Let $k\geq 2$. Suppose that $y_1\in L^2_{\mathcal{F}^w_{kT}}(\Omega,\mathbb{P};\mathbb{R}^n)$, $y\in C_{\mathfrak{T}(w,\mathbb{P})}([0,kT];\mathbb{R}^n)$, $f\in L^2_{\mathfrak{T}(w,\mathbb{P})}(0,kT;\mathbb{R}^n)$  and $\{g_i\}_{i=1}^d\subset L^2_{\mathfrak{T}(w,\mathbb{P})}(0,kT;\mathbb{R}^n)$ satisfy the equation
\begin{equation}\label{yu-6-25-2}
    y(t)=y_1+\int_t^{kT}f(s)ds+\sum_{i=1}^d\int_t^{kT}g_i(s)dw^i(s)
    \mbox{ for every }t\in[0,kT].
\end{equation}
     Define $\widehat{w}(\cdot):=w((k-1)T+\cdot)-w((k-1)T)$  in $\mathbb{R}^+$. Then, $\widehat{y}(\cdot):=y((k-1)T+\cdot)$,
     $\widehat{f}(\cdot):=f((k-1)T+\cdot)$ and $\widehat{g}_i(\cdot)=g_i((k-1)T+\cdot)$ $(i=1,2,\ldots,d)$  are adapted to $\{\mathcal{G}^{\widehat{w}}_t\}_{t\in[0,T]}$ and satisfy the equation
\begin{equation*}\label{yu-6-25-3}
   \widehat{y}(t)=y_1+\int_t^{T}\widehat{f}(s)ds+\sum_{i=1}^d
   \int_t^{T}\widehat{g}_i(s)d\widehat{w}^i(s)
    \;\;\mbox{for}\;\;t\in[0,T]
\end{equation*}
     on the filtered probability space $\mathfrak{T}(\widehat{w},\mathbb{P}
    (\cdot|\mathcal{F}^w_{(k-1)T})(\omega))$ for  $\mathbb{P}$-almost surely (a.s.)  $\omega\in \Omega$,
    where $\{\mathcal{G}^{\widehat{w}}_t\}_{t\in[0,T]}$ is the natural filtration generated by
    $\widehat{w}(\cdot)$ on $(\Omega,\mathcal{F},\mathbb{P}
    (\cdot|\mathcal{F}^w_{(k-1)T})(\omega))$.  Noted that
    $\widehat{w}(\cdot)$ is the standard Brownian motion in $(\Omega,\mathcal{F}, \mathbb{P}(\cdot|\mathcal{F}^w_{(k-1)T})(\omega))$ for $\mathbb{P}$-a.s. $\omega\in \Omega$.
\end{lemma}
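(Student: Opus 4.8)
The plan is to decompose the stochastic integral equation \eqref{yu-6-25-2} over $[0,kT]$ into its restriction to the terminal block $[(k-1)T, kT]$ and then to reparametrise time by the shift $t\mapsto (k-1)T + t$. Writing the equation \eqref{yu-6-25-2} at the time $t=(k-1)T$ gives an expression for $y((k-1)T)$, and subtracting this from the equation at a general time $s\in[(k-1)T,kT]$ removes all contributions of $f$, $g_i$ on $[0,(k-1)T]$; what remains is an integral equation on the block $[(k-1)T,kT]$ in which the driving noise is exactly $w(\cdot)-w((k-1)T)$. After the substitution $t = (k-1)T + r$, the objects $\widehat y, \widehat f, \widehat g_i$ and the increments $d\widehat w^i$ appear directly, so the stated integral identity is essentially an algebraic rewriting. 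The only subtlety here is that $y_1$ in \eqref{yu-6-25-2} is the terminal value at $kT$ and appears unchanged; this is immediate since $\widehat y(T) = y(kT) = y_1$ and the new equation at $r=T$ reduces to $\widehat y(T) = y_1$.

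The more delicate part is the probabilistic content: first, that $\widehat w(\cdot)$ is a standard $d$-dimensional Brownian motion on the regular conditional probability space $\mathfrak{T}(\widehat w,\mathbb{P}(\cdot|\mathcal{F}^w_{(k-1)T})(\omega))$ for $\mathbb{P}$-a.s.\ $\omega$, and second, that $\widehat y, \widehat f, \widehat g_i$ are adapted to the filtration $\{\mathcal{G}^{\widehat w}_t\}_{t\in[0,T]}$ generated by $\widehat w$ on that space. For the first point I would invoke the independence of increments of Brownian motion together with the existence of a regular conditional probability distribution given $\mathcal{F}^w_{(k-1)T}$ (the underlying space being Polish, or at least standard Borel after the usual completion arguments): under $\mathbb{P}$, the process $\widehat w$ is independent of $\mathcal{F}^w_{(k-1)T}$ and is a Brownian motion, so for $\mathbb{P}$-a.s.\ $\omega$ its law under $\mathbb{P}(\cdot|\mathcal{F}^w_{(k-1)T})(\omega)$ coincides with Wiener measure; a countable generating argument (checking finite-dimensional distributions, e.g.\ via characteristic functions, on a countable determining class) upgrades this to a full a.s.\ statement simultaneously for all relevant test sets.

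For the adaptedness claim, the point is that each of $\widehat y(r), \widehat f(r), \widehat g_i(r)$ is $\mathcal{F}^w_{(k-1)T+r}$-measurable, hence measurable with respect to $\sigma\big(\mathcal{F}^w_{(k-1)T}, \,w(u)-w((k-1)T): (k-1)T\le u\le (k-1)T+r\big) = \sigma\big(\mathcal{F}^w_{(k-1)T}, \mathcal{G}^{\widehat w}_r\big)$. After passing to the conditional probability $\mathbb{P}(\cdot|\mathcal{F}^w_{(k-1)T})(\omega)$, the $\sigma$-field $\mathcal{F}^w_{(k-1)T}$ becomes trivial (it is generated by sets of conditional probability $0$ or $1$), so $\widehat y(r)$ etc.\ are, up to $\mathbb{P}(\cdot|\mathcal{F}^w_{(k-1)T})(\omega)$-null sets, $\mathcal{G}^{\widehat w}_r$-measurable; this is exactly the required adaptedness, and the same reasoning handles progressive measurability by working with the product $\sigma$-field on $[0,T]\times\Omega$. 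I expect the main obstacle to be bookkeeping the null sets carefully: the exceptional $\mathbb{P}$-null set in $\omega$ must be chosen uniformly so that, off it, all three statements — the Brownian property of $\widehat w$, the adaptedness of the shifted processes, and the validity of the shifted integral equation (in particular that the Itô integrals with respect to $\widehat w$ under the conditional measure agree with the restrictions of the original Itô integrals) — hold simultaneously. Establishing the last of these, namely that $\int_{(k-1)T}^{(k-1)T+r} g_i\,dw^i$ computed under $\mathbb{P}$ equals the Itô integral $\int_0^r \widehat g_i\,d\widehat w^i$ computed under the conditional measure for a.e.\ $\omega$, requires approximating $g_i$ by simple processes and using an $L^2$-isometry argument that is stable under conditioning; this is routine but is where the technical care is concentrated.
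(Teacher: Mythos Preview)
Your proposal is correct and follows the same overall architecture as the paper: restrict the integral equation to the terminal block $[(k-1)T,kT]$, shift time, and verify that under the regular conditional probability $\mathbb{P}(\cdot\mid\mathcal{F}^w_{(k-1)T})(\omega)$ the past becomes trivial so that the shifted processes are adapted to the increment filtration $\{\mathcal{G}^{\widehat w}_t\}$. The difference lies in how the adaptedness is obtained. You argue abstractly that $\mathcal{F}^w_{(k-1)T+r}=\sigma\big(\mathcal{F}^w_{(k-1)T},\mathcal{G}^{\widehat w}_r\big)$ and that $\mathcal{F}^w_{(k-1)T}$ is $\mathbb{P}(\cdot\mid\mathcal{F}^w_{(k-1)T})(\omega)$-trivial, whence $\widehat y(r)$ is $\mathcal{G}^{\widehat w}_r$-measurable up to null sets. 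The paper instead invokes the functional representation theorem (\cite[Chapter 1, Theorem 2.10]{Yong-Zhou-1999}): each of $y,f,g_i$ is written as a Borel functional of the Brownian path, $y(t,\omega)=\varphi(t,w(t\wedge\cdot,\omega))$, and under the conditional measure the path on $[0,(k-1)T]$ is frozen to the deterministic function $w(\cdot,\omega_0)$ (via \cite[Chapter 1, Proposition 2.13]{Yong-Zhou-1999}), so the shifted process becomes an explicit Borel functional of $\widehat w$ alone. Your $\sigma$-algebra argument is more economical and avoids the representation theorem; the paper's route is more constructive and makes the null-set bookkeeping (which you rightly identify as the main technical point) fully explicit by tracking a single exceptional set $\mathcal{N}$ and then $\mathcal{M}(\omega_0)$. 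For the identification of the It\^o integrals under the two measures, the paper also relies on the functional representation and asserts it is straightforward from the definition of the It\^o integral, which amounts to your simple-process approximation argument.
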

\begin{proof}[\emph{\textbf{Proof.}}]
    By \cite[Chapter 1, Proposition 2.13]{Yong-Zhou-1999}, there is a $\mathbb{P}$-null set $\mathcal{N}\in \mathcal{F}$ such that, for every  $\omega_0\in \Omega\setminus \mathcal{N}$,
   \begin{equation}\label{yu-6-25-4}
        w(s,\omega)=w(s,\omega_0),\;\;
 \mathbb{P}(\cdot|\mathcal{F}^w_{(k-1)T})(\omega_0)
    \mbox{-a.s.}\;\;\omega\in\Omega\;\;\mbox{for each}\;\;s\in[0,(k-1)T].
\end{equation}
    Fix an arbitrary  $\omega_0\in \Omega\setminus \mathcal{N}$.  Since $y$,
    $f$ and
    $\{g_i\}_{i=1}^d$ are $\{\mathcal{F}^w_t\}_{t\in[0,kT]}$-adapted, according to  \cite[Chapter 1, Theorem 2.10]{Yong-Zhou-1999}, there exit  $\varphi$, $\xi$ and $\{\psi_i\}_{i=1}^d$ in $\mathcal{A}_{kT}^{d,n}$ (defined by \eqref{yu-6-27-00})  such that, for any  $t\in [0,kT]$,
\begin{equation}\label{yu-6-25-10}
\begin{cases}
    y(t,\omega)=\varphi(t,w(t\wedge \cdot,\omega))\\
    f(t,\omega)=\xi(t,w(t\wedge \cdot,\omega))\\
    g_i(t,\omega)=\psi_i(t,w(t\wedge \cdot,\omega))
    \;\;\mbox{for}\;\;i\in\{1,2,\ldots,d\},
\end{cases}
    \;\;\mathbb{P}\mbox{-a.s.}\;\;
    \omega\in\Omega.
\end{equation}
 From \eqref{yu-6-25-2}, we deduce that, for any  $t\in[0,T]$,
\begin{eqnarray}\label{yu-6-25-11}
    &\;&\varphi((k-1)T+t,w(((k-1)T+t)\wedge \cdot,\cdot))\nonumber\\
    &=&
    \varphi(kT, w((kT)\wedge \cdot,\cdot))+
    \int_{(k-1)T+t}^{kT}\xi(s,w(s\wedge \cdot,\cdot))ds+\sum_{i=1}^d
    \int_{(k-1)T+t}^{kT}\psi_i(s,w(s\wedge \cdot,\cdot))dw_i(s).
\end{eqnarray}
   (Here and in what follows, for $a\in\mathbb{R}^+$,  $w(a\wedge\cdot,\cdot)$
   and $\widehat{w}(a\wedge((\cdot-(k-1)T)\vee 0),\cdot)$ stand for two functions in $[0,kT]\times \Omega$.  In them, the first dot represents the time variable in $[0,kT]$ and the second dot represents the sample variable in $\Omega$.) It is straightforward to verify that for a fixed  $t\in\mathbb{R}^+$,
\begin{equation*}
    ((k-1)T+t)\wedge s-(k-1)T
    =
\begin{cases}
        s-(k-1)T&\mbox{if}\;\;s\leq (k-1)T+t,\\
    t&\mbox{if}\;\;s>(k-1)T+t.
\end{cases}
\end{equation*}
     Consequently,
\begin{equation}\label{yu-6-16-000-bb}
    ((k-1)T+t)\wedge s-(k-1)T=t\wedge(s-(k-1)T)\in [0,t]\;\;\mbox{for every}\;\;s\in [(k-1)T,kT].
\end{equation}
   It follows from the definition of $\widehat{w}(\cdot)$  that
\begin{eqnarray}\label{yu-6-25-12}
    &\;&\{\varphi((k-1)T+t,w(((k-1)T+t)\wedge \cdot,\cdot))\}_{t\in[0,T]}\nonumber\\
    &=&\{\varphi((k-1)T+t,\widehat{w}(t\wedge((\cdot-(k-1)T))\vee 0),\cdot)+w(((k-1)T)\wedge \cdot,\cdot))\}_{t\in[0,T]}.
\end{eqnarray}
     (Here, it should be noted that, by the definition of $\widehat{w}$, $\widehat{w}(t\wedge((\cdot-(k-1)T)\vee 0),\omega)$ is continuous in $[0,kT]$ for $\mathbb{P}$-a.s. $\omega\in \Omega$ and $(t\wedge((s-(k-1)T)))\vee 0=t\wedge((s-(k-1)T))\vee 0)$ for each $s\in[0,kT]$.)  Similarly, we can demonstrate that
\begin{eqnarray}\label{yu-6-26-0001}
    &\;&\{\xi((k-1)T+t,w(((k-1)T+t)\wedge \cdot,\cdot))\}_{t\in[0,T]}\nonumber\\
    &=&\{\xi((k-1)T+t, \widehat{w}(t\wedge((\cdot-(k-1)T)\vee 0),\cdot)+w(((k-1)T)\wedge \cdot,\cdot))\}_{t\in[0,T]}
\end{eqnarray}
    and
\begin{eqnarray}\label{yu-6-25-13}
    &\;&\{\psi_i((k-1)T+t,w(((k-1)T+t)\wedge \cdot,\cdot))\}_{t\in[0,T]}\nonumber\\
    &=&\{\psi_i((k-1)T+t,\widehat{w}(t\wedge((\cdot-(k-1)T)\vee 0),\cdot)+w(((k-1)T)\wedge \cdot,\cdot))\}_{t\in[0,T]}
\end{eqnarray}
 for $i\in\{1,2,\ldots,d\}$.
 Moreover, by \eqref{yu-6-25-13} and the definition of It\^{o}'s integral, it is straightforward to verify that, for each
 $i\in\{1,2,\ldots,d\}$ and $t\in[0,T]$,
\begin{eqnarray}\label{yu-6-26-1}
    &\;&\int_{(k-1)T+t}^{kT}\psi_i(s,w(s\wedge \cdot,\cdot))dw_i(s)\nonumber\\
    &=&\int_{t}^{T}\psi_i((k-1)T+s,\widehat{w}(s\wedge((\cdot-(k-1)T)\vee 0),\cdot)+w(((k-1)T)\wedge \cdot,\cdot))d\widehat{w}_i(s).
\end{eqnarray}
     Furthermore, by \eqref{yu-6-26-0001}, for any $t\in[0,T]$, we have
\begin{eqnarray}\label{yu-6-26-5}
    &\;&\int_{(k-1)T+t}^{kT}\xi(s,w(s\wedge \cdot,\cdot))ds\nonumber\\
    &=&\int_{t}^{T}\xi((k-1)T+s,\widehat{w}(s\wedge((\cdot-(k-1)T)\vee 0),\cdot)+w(((k-1)T)\wedge \cdot,\cdot))ds.
\end{eqnarray}
    For any $t\in[0,T]$, we define
\begin{equation}\label{yu-6-26-2}
\begin{cases}
    \widetilde{y}(t,\cdot):=\varphi((k-1)T+t, \widehat{w}(t\wedge((\cdot-(k-1)T)\vee 0),\cdot)+w(((k-1)T)\wedge \cdot,\cdot)),\\
    \widetilde{f}(t,\cdot):=\xi((k-1)T+t, \widehat{w}(t\wedge((\cdot-(k-1)T)\vee 0),\cdot)+w(((k-1)T)\wedge \cdot,\cdot)),\\
    \widetilde{g}_i(t,\cdot):=\psi_i((k-1)T+t, \widehat{w}(t\wedge((\cdot-(k-1)T)\vee 0),\cdot)+w(((k-1)T)\wedge \cdot,\cdot)),&i\in\{1,2,\ldots,d\}.
\end{cases}
\end{equation}
    Fix $t\in[0,T]$ arbitrarily. By \eqref{yu-6-25-4}, there is $\mathcal{M}:=\mathcal{M}(\omega_0)
     \subset\Omega$ with $\mathbb{P}(\mathcal{M}|\mathcal{F}_{(k-1)T}^w)(\omega_0)=0$ such that
     $w(s,\omega)=w(s,\omega_0)$ for any $\omega\in\Omega\setminus\mathcal{M}$ and each $s\in[0,(k-1)T]$. Thus, for each $t\in[0,T]$,
\begin{eqnarray}\label{yu-4-28-5}
    \widetilde{y}(t,\cdot)=\varphi((k-1)T+t, \widehat{w}(t\wedge((\cdot-(k-1)T)\vee 0),\cdot)+w(((k-1)T)\wedge \cdot,w_0))\;\;\mbox{on}\;\;
    \Omega\setminus\mathcal{M}.
\end{eqnarray}
    By the Borel measurability of $\varphi(t,\cdot)$, it is clear that
\begin{eqnarray}\label{yu-4-28-1}
        \varphi((k-1)T+t,\widehat{w}(t\wedge((\cdot-(k-1)T)\vee 0),\cdot)+w(((k-1)T)\wedge \cdot,w_0))\;\mbox{is}\; \mathcal{G}_t^{\widehat{w}}\mbox{-measurable}.
\end{eqnarray}
    (Here, we note that $w(((k-1)T)\wedge \cdot,w_0)$ is a known function (in $[0,kT]$). Moreover, since $\mathbb{P}(\mathcal{M}|\mathcal{F}_{(k-1)T}^w)(\omega_0)=0$, we can claim that
    $\mathcal{M}$ and $\Omega\setminus\mathcal{M}$ are in $\mathcal{G}_t^{\widehat{w}}$.
    Taking Borel set $I$ of $\mathbb{R}^n$ and fix $t\in[0,T]$ arbitrarily. Define
\begin{eqnarray*}
    E^I(\omega_0):=\{\omega\in \Omega:\varphi((k-1)T+t, \widehat{w}(t\wedge((\cdot-(k-1)T)\vee 0),\omega)+w(((k-1)T)\wedge \cdot,\omega_0))\in I\},
\end{eqnarray*}
\begin{eqnarray*}
     E_1^I(\omega_0):=\{\omega\in \Omega\setminus\mathcal{M}:\varphi((k-1)T+t, \widehat{w}(t\wedge((\cdot-(k-1)T)\vee 0),\omega)+w(((k-1)T)\wedge \cdot,\omega_0))\in I\}
\end{eqnarray*}
    and
\begin{eqnarray*}
    E^I_2(\omega_0):=\{\omega\in \mathcal{M}:\varphi((k-1)T+t, \widehat{w}(t\wedge((\cdot-(k-1)T)\vee 0),\omega)+w(((k-1)T)\wedge \cdot,,\omega_0))\in I\}.
\end{eqnarray*}
    It is obvious that $E_1^I(\omega_0)=E^I(\omega_0)\cap (E_2^I(\omega_0))^c$ and $(E^I_2(\omega_0))^c$ is in $\mathcal{G}_t^{\widehat{w}}$ (because of $E^I_2(\omega_0)\subset \mathcal{M}$ and $\mathbb{P}(\mathcal{M}|\mathcal{F}_{(k-1)T}^w)(\omega_0)=0$).
    These, along with \eqref{yu-4-28-1}, give that
\begin{equation}\label{yu-4-28-3}
    E_1^I(\omega_0)\;\;\mbox{is in}\;\;\mathcal{G}_t^{\widehat{w}}.
\end{equation}
    Moreover, by \eqref{yu-4-28-5}, we have
\begin{eqnarray}\label{yu-4-28-2}
\{\omega\in \Omega:\widetilde{y}(t,\omega)\in I\}
=\{\omega\in \Omega\setminus\mathcal{M}:\widetilde{y}(t,\omega)\in I\}\cup \{\omega\in\mathcal{M}:\widetilde{y}(t,\omega)\in I\}\nonumber\\
= E_1^I(\omega_0)\cup \{\omega\in\mathcal{M}:\widetilde{y}(t,\omega)\in I\}.
\end{eqnarray}
    Since $\{\omega\in\mathcal{M}:\widetilde{y}(t,\omega)\in I\}\subset\mathcal{M}$ and $\mathbb{P}(\mathcal{M}|\mathcal{F}_{(k-1)T}^w)(\omega_0)=0$, by \eqref{yu-4-28-3}
    and \eqref{yu-4-28-2}, we can claim that $\{\omega\in \Omega:\widetilde{y}(t,\omega)\in I\}$ is in $\mathcal{G}_t^{\widehat{w}}$. This, along with the arbitrariness of $I$, implies that $\widetilde{y}(t)$ is $\mathcal{G}_t^{\widehat{w}}$-measurable. By the same way, we can show that $\widetilde{f}(t)$ and $\widetilde{g}_i(t)$ ($i=1,2,\ldots,d$) are $\mathcal{G}_t^{\widehat{w}}$-measurable. Thus, by the arbitrariness of $t\in[0,T]$, we can conclude that $\widetilde{y}$, $\widetilde{f}$ and $\widetilde{g}_i$ ($i=1,2,\ldots,d$)  are adapted to $\{\mathcal{G}^{\widehat{w}}_t\}_{t\in[0,T]}$ on $(\Omega,\mathcal{F}, \mathbb{P}(\cdot|\mathcal{F}^w_{(k-1)T})(\omega_0))$.
     Consequently, by \eqref{yu-6-25-11}, \eqref{yu-6-26-1}, \eqref{yu-6-26-5}, and \eqref{yu-6-26-2}, we derive
$\widetilde{y}(t)=\widetilde{y}(T)+
    \int_t^{T}\widetilde{f}(s)ds
    +\sum_{i=1}^d\int_t^{T}\widetilde{g}_i(s)d\widehat{w}_i(s)$
     on $(\Omega,\mathcal{F}, \mathbb{P}(\cdot|\mathcal{F}^w_{(k-1)T})(\omega_0))$.
    However, by \eqref{yu-6-25-10}, it is clear that $\widetilde{y}(\cdot)=\widehat{y}(\cdot)$, $\widetilde{f}(\cdot)=\widehat{f}(\cdot)$ and $\widetilde{g}_i(\cdot)=\widehat{g}_i(\cdot)$ for $i\in\{1,2,\ldots,d\}$. This, combined with \eqref{yu-6-25-12}, \eqref{yu-6-26-0001}, \eqref{yu-6-25-13}, and the fact that $\mathbb{P}(\mathcal{N})=0$, concludes the proof.
\end{proof}

    Let $k\in\mathbb{N}^+$ be fixed arbitrarily. We consider the following  BSDE on $\mathfrak{T}(w,\mathbb{P})$:
\begin{equation}\label{yu-6-22-1}
\begin{cases}
    dy(t)=(-A^{\top}y(t)-\sum_{i=1}^dC_i^\top Y_i(t))dt+\sum_{i=1}^dY_i(t)dw^i(t),
    &t\in(0,kT),\\
    y(kT)=y_1\in L^2_{\mathcal{F}^w_{kT}}(\Omega,\mathbb{P};\mathbb{R}^n).
\end{cases}
\end{equation}
    The unique solution to this equation is denoted by  $(y^k(\cdot;y_1),(Y^k_i(\cdot;y_1))_{i=1}^d)$ which belongs to $C_{\mathfrak{T}(w,\mathbb{P})}([0,kT];\mathbb{R}^n)\times (L^2_{\mathfrak{T}(w,\mathbb{P})}(0,kT;\mathbb{R}^n))^d$.
    (For further details, see, for instance,  \cite[Chapter 4, Theorem 4.2]{Lv-Zhang}.)
\begin{proposition}\label{yu-proposition-6-27-1}
    Let $k\geq 2$ and define  $\widehat{w}(\cdot):=w((k-1)T+\cdot)-w((k-1)T)$ in $[0,T]$. Suppose that $y_1\in L^2_{\mathcal{F}^w_{kT}}(\Omega,\mathbb{P};\mathbb{R}^n)$. Then,
    for $\mathbb{P}$-a.s. $\omega\in \Omega$, on $\mathfrak{T}(\widehat{w},\mathbb{P}
    (\cdot|\mathcal{F}^w_{(k-1)T})(\omega))$, the following statements  are true:
\begin{enumerate}
  \item [$(i)$] $y_1\in L^2_{\mathcal{G}_T^{\widehat{w}}}(\Omega,
  \mathbb{P}
    (\cdot|\mathcal{F}^w_{(k-1)T})(\omega);\mathbb{R}^n)$.
  \item [$(ii)$] $z(\cdot):=y^k((k-1)T+\cdot;y_1)$ and $\{Z_i(\cdot)\} _{i=1}^d:=\{Y_i^k((k-1)T+\cdot;y_1)\}_{i=1}^d$ are
   $\{\mathcal{G}^{\widehat{w}}_t\}_{t\in[0,T]}$-adopted on $\mathfrak{T}(\widehat{w},\mathbb{P}
    (\cdot|\mathcal{F}^w_{(k-1)T})(\omega))$.
   \item[$(iii)$] ($z(\cdot):=y^k((k-1)T+\cdot;y_1), (Z_i(\cdot)) _{i=1}^d:=(Y_i^k((k-1)T+\cdot;y_1))_{i=1}^d)$ is the solution of the following BSDE (on $\mathfrak{T}(\widehat{w},\mathbb{P}
    (\cdot|\mathcal{F}^w_{(k-1)T})(\omega))$):
  \begin{equation}\label{yu-6-22-bb-1}
\begin{cases}
    dz(t)=(-A^{\top}z(t)-\sum_{i=1}^dC_i^\top Z_i(t))dt+\sum_{i=1}^dZ_i(t)d\widehat{w}^i(t),
    &t\in(0,T),\\
    z(T)=y_1.
\end{cases}
\end{equation}
\end{enumerate}
    Here  $\{\mathcal{G}^{\widehat{w}}_t\}_{t\in[0,T]}$ is the natural filtration generated by
    $\widehat{w}(\cdot)$ on $(\Omega,\mathcal{F},\mathbb{P}
    (\cdot|\mathcal{F}^w_{(k-1)T})(\omega))$.
\end{proposition}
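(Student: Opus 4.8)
The plan is to derive all three assertions from Lemma~\ref{yu-lemma-6-27-1}, once the backward equation \eqref{yu-6-22-1} has been put in the integral form \eqref{yu-6-25-2}. Let $(y^k(\cdot;y_1),(Y^k_i(\cdot;y_1))_{i=1}^d)\in C_{\mathfrak{T}(w,\mathbb{P})}([0,kT];\mathbb{R}^n)\times(L^2_{\mathfrak{T}(w,\mathbb{P})}(0,kT;\mathbb{R}^n))^d$ be the solution of \eqref{yu-6-22-1}, and set
\[
 y(\cdot):=y^k(\cdot;y_1),\qquad f(\cdot):=A^\top y^k(\cdot;y_1)+\sum_{i=1}^d C_i^\top Y^k_i(\cdot;y_1),\qquad g_i(\cdot):=-Y^k_i(\cdot;y_1)\ \ (i=1,\ldots,d).
\]
Integrating \eqref{yu-6-22-1} over $[t,kT]$ shows that $y,f,\{g_i\}_{i=1}^d$ satisfy \eqref{yu-6-25-2} with terminal value $y_1$. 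Since $y^k(\cdot;y_1)\in C_{\mathfrak{T}(w,\mathbb{P})}([0,kT];\mathbb{R}^n)$ — hence also in $L^2_{\mathfrak{T}(w,\mathbb{P})}(0,kT;\mathbb{R}^n)$, a continuous $L^2$-valued function on a compact interval being bounded — while $Y^k_i(\cdot;y_1)\in L^2_{\mathfrak{T}(w,\mathbb{P})}(0,kT;\mathbb{R}^n)$ and $A,C_i$ are constant matrices, all hypotheses of Lemma~\ref{yu-lemma-6-27-1} are met, together with the given $y_1\in L^2_{\mathcal{F}^w_{kT}}(\Omega,\mathbb{P};\mathbb{R}^n)$.

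Applying Lemma~\ref{yu-lemma-6-27-1} gives, for $\mathbb{P}$-a.s.\ $\omega\in\Omega$ and on $\mathfrak{T}(\widehat{w},\mathbb{P}(\cdot|\mathcal{F}^w_{(k-1)T})(\omega))$, that $z(\cdot):=y^k((k-1)T+\cdot;y_1)$ and $g_i((k-1)T+\cdot)=-Y^k_i((k-1)T+\cdot;y_1)=-Z_i(\cdot)$, hence also the $Z_i(\cdot)$, are adapted to $\{\mathcal{G}^{\widehat{w}}_t\}_{t\in[0,T]}$ — this is exactly $(ii)$ — and that, with $\widehat{f}(\cdot):=f((k-1)T+\cdot)$,
\[
 z(t)=y_1+\int_t^T\widehat{f}(s)\,ds-\sum_{i=1}^d\int_t^T Z_i(s)\,d\widehat{w}^i(s),\qquad t\in[0,T],
\]
holds on that space. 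Evaluating $(ii)$ at $t=T$ shows $y_1=z(T)$ is $\mathcal{G}^{\widehat{w}}_T$-measurable; and since $E_{\mathbb{P}}|y_1|^2<+\infty$, one has $E_{\mathbb{P}(\cdot|\mathcal{F}^w_{(k-1)T})(\omega)}|y_1|^2=E_{\mathbb{P}}\big[\,|y_1|^2\mid\mathcal{F}^w_{(k-1)T}\,\big](\omega)<+\infty$ for $\mathbb{P}$-a.s.\ $\omega$. Thus $y_1\in L^2_{\mathcal{G}^{\widehat{w}}_T}(\Omega,\mathbb{P}(\cdot|\mathcal{F}^w_{(k-1)T})(\omega);\mathbb{R}^n)$, which is $(i)$.

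It remains to match the displayed equation with \eqref{yu-6-22-bb-1} in the required space. By the conditional Fubini theorem and the change of variable $s\mapsto s-(k-1)T$,
\[
 E_{\mathbb{P}(\cdot|\mathcal{F}^w_{(k-1)T})(\omega)}\!\int_0^T|Z_i(s)|^2\,ds=E_{\mathbb{P}}\Big[\int_{(k-1)T}^{kT}|Y^k_i(s;y_1)|^2\,ds\;\Big|\;\mathcal{F}^w_{(k-1)T}\Big](\omega)<+\infty
\]
for $\mathbb{P}$-a.s.\ $\omega$, so $Z_i\in L^2_{\mathfrak{T}(\widehat{w},\mathbb{P}(\cdot|\mathcal{F}^w_{(k-1)T})(\omega))}(0,T;\mathbb{R}^n)$; likewise $\sup_{t\in[0,T]}E_{\mathbb{P}(\cdot|\mathcal{F}^w_{(k-1)T})(\omega)}|z(t)|^2<+\infty$, and the displayed integral equation combined with the standard $L^2$-estimates for the Lebesgue and It\^o integrals yields $z\in C_{\mathfrak{T}(\widehat{w},\mathbb{P}(\cdot|\mathcal{F}^w_{(k-1)T})(\omega))}([0,T];\mathbb{R}^n)$. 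Substituting $\widehat{f}(s)=A^\top z(s)+\sum_{i=1}^d C_i^\top Z_i(s)$ into the displayed equation and passing to differential form produces \eqref{yu-6-22-bb-1} with $z(T)=y^k(kT;y_1)=y_1$; by well-posedness (hence uniqueness) of \eqref{yu-6-22-bb-1} on $\mathfrak{T}(\widehat{w},\mathbb{P}(\cdot|\mathcal{F}^w_{(k-1)T})(\omega))$, the pair $(z(\cdot),(Z_i(\cdot))_{i=1}^d)$ is its solution, which is $(iii)$. Intersecting the countably many $\mathbb{P}$-null exceptional sets arising above (one from Lemma~\ref{yu-lemma-6-27-1}, one from the moment bound, one per conditional Fubini step) completes the proof. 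Essentially all of the substance is carried by Lemma~\ref{yu-lemma-6-27-1}; the only step requiring genuine care here is transferring measurability and square-integrability from $(\Omega,\mathcal{F},\mathbb{P})$ to the regular conditional probabilities $\mathbb{P}(\cdot|\mathcal{F}^w_{(k-1)T})(\omega)$.
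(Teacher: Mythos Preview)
Your proof is correct and follows essentially the same route as the paper: rewrite the solution of \eqref{yu-6-22-1} in the integral form \eqref{yu-6-25-2}, invoke Lemma~\ref{yu-lemma-6-27-1} to obtain $(ii)$ and the shifted integral equation (hence $(iii)$), and read off $(i)$ from $z(T)=y_1$ together with the finiteness of the conditional second moment. Your choice of signs $f=A^\top y^k+\sum_i C_i^\top Y_i^k$, $g_i=-Y_i^k$ is in fact the one that matches \eqref{yu-6-25-2} (the paper's stated choice has a harmless sign slip), and the extra care you take in verifying square-integrability under the regular conditional probabilities is a welcome elaboration of what the paper leaves implicit.
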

\begin{proof}
    Let $y(\cdot):=y^k(\cdot;y_1)$,  $g_i(\cdot):=Y^k_i(\cdot;y_1)$ for
    $i\in\{1,2,\ldots,d\}$,  and define $f:=-A^\top y-\sum_{i=1}^dC_i^\top g_i$.
     One can readily verify that the tuple $(y,f,\{g_i\}_{i=1}^d)$  satisfies the assumptions outlined  in Lemma \ref{yu-lemma-6-27-1}.  Consequently, by invoking Lemma \ref{yu-lemma-6-27-1},
     we can affirm that the claim $(ii)$ and $(iii)$ hold on $\mathfrak{T}(\widehat{w},\mathbb{P}
    (\cdot|\mathcal{F}^w_{(k-1)T})(\omega))$ for $\mathbb{P}$-a.s. $\omega\in \Omega$. By $(ii)$ and the fact $z(T)=y_1$, the claim $(i)$ is obvious.
     Thus, the statements in Proposition \ref{yu-proposition-6-27-1} are true.
\end{proof}
\begin{remark}\label{yu-remark-4-11-1}
    Under the assumptions in Proposition \ref{yu-proposition-6-27-1}, by $(i)$ in Proposition \ref{yu-proposition-6-27-1} and \cite[Chapter 4, Theorem 4.2]{Lv-Zhang}, for $\mathbb{P}$-a.s. $\omega\in \Omega$, in $C_{\mathfrak{T}(\widehat{w},\mathbb{P}
    (\cdot|\mathcal{F}^w_{(k-1)T})(\omega))}([0,T];\mathbb{R}^n)\times (L^2_{\mathfrak{T}(\widehat{w},\mathbb{P}
    (\cdot|\mathcal{F}^w_{(k-1)T})(\omega))}(0,T;\mathbb{R}^n))^d$, the equation \eqref{yu-6-22-bb-1} has a unique solution.
\end{remark}
\subsection{A characterization on  a special FBSDE}\label{yu-sect-8-22}

  In this subsection and throughout the remainder of this paper, we define
  $C^{k_1,k_2}([0,T]\times \mathbb{R}^{p};\mathbb{R}^q)$ ($k_1,k_2, p,q\in\mathbb{N}$)
    as the set of functions from   $[0,T]\times \mathbb{R}^{p}$ to $\mathbb{R}^q$ that
     are of class $C^{k_1,k_2}$,
 with all partial derivatives of order less than or equal to  $k_1$
   in the time variable and less than or equal to  $k_2$
    in the space variables. Similarly, we define
    $C^k(\mathbb{R}^p;\mathbb{R}^q)$.  Furthermore, for a given
    $g=(g_1,g_2,\cdots,g_q)^\top\in C^1(\mathbb{R}^p;\mathbb{R}^q)$ (where  $(p,q\in\mathbb{N}^+)$,
      we denote by $\nabla g:=(\nabla g_1,\nabla g_2,\cdots,\nabla g_q)^\top$
       the gradient operator of $g$.

    Let $T\in\mathbb{R}^+$, $l\in\mathbb{N}^+$, $N\in\mathbb{N}^+$ and the filtered probability space $\mathfrak{T}(w,\mathbb{P})$ be fixed. For each $k\in\{1,2,\ldots,l\}$, we take $\alpha_k:=(\alpha_{k,1},\alpha_{k,2},\cdots,\alpha_{k,n})^\top\in\mathbb{R}^n$, $h_k:=(h_{k,1},h_{k,2},\cdots,h_{k,n})\in C([0,T];\mathbb{R}^{d\times n}$) and  $H_k:=\frac{1}{2}(|h_{k,1}|^2_{\mathbb{R}^d}, |h_{k,2}|^2_{\mathbb{R}^d},
    \cdots,|h_{k,n}|^2_{\mathbb{R}^d})$.
   We  let the function  $V_k:\mathbb{R}^n\to \mathbb{R}^n$ be defined by
    $V_k(z):=(\alpha_{k,1}\exp(z_{1}), \alpha_{k,2}\exp(z_{2}),\cdots, \alpha_{k,n}\exp(z_{n}))^\top$ for any
    $z=(z_{1},z_{2},\cdots,z_{n})^\top\in\mathbb{R}^n$.
     Let $F_l:\mathbb{R}^{nl}\to \mathbb{R}^n$ be defined by
\begin{equation}\label{yu-7-23-bbbb-1}
    F_l(x):=\sum_{k=1}^lV_k(x_k)\;\;\mbox{with}\;\;x:=(x_1^\top,x_2^\top,
    \cdots,x_l^\top)^\top(\in\mathbb{R}^{nl})
\end{equation}
    and
\begin{equation}\label{yu-7-11-3}
    F_{l,N}(x):=\psi_N(x)F_l(x)\;\;\mbox{for}\;\;x\in\mathbb{R}^{nl},
\end{equation}
    where $\psi_N\in C_c^\infty(\mathbb{R}^{nl};\mathbb{R})$ satisfies
    $\psi_N(x)\in[0,1]$ for any $x\in\mathbb{R}^{nl}$ and
$\psi_N(x)=1$ if $|x|_{\mathbb{R}^{nl}}\leq N$ and
    $\psi_N(x)=0$  if $|x|_{\mathbb{R}^{nl}}\geq 2N$.

    Define
\begin{equation}\label{yu-7-23-b0-001}
    H:=(H_1,H_2,\cdots,H_l)
    \;\;\mbox{and}\;\;h:=(h_1,h_2,\cdots,h_l).
\end{equation}
    It is evident that $H^\top \in C([0,T];\mathbb{R}^{nl})$ and
    $h\in C([0,T];\mathbb{R}^{d\times(nl)})$. For any  $s\in[0,T]$, let  $\mathcal{F}^{w,s}_t$ be the filtration generated by $w(t)-w(s) (t\geq s)$ and define $\mathfrak{T}^s(w,\mathbb{P}):=(\Omega,\mathcal{F},
    \{\mathcal{F}^{w,s}_t\}_{t\geq s},\mathbb{P})$.

    Let
    $(x^{s,\zeta}, y^{s,\zeta},Y^{s,\zeta}=(Y_i^{s,\zeta})_{i=1}^d)$  be the  solution of the following  FBSDE:
\begin{equation}\label{yu-7-9-3}
\begin{cases}
    dx(t)=-H^\top(t)dt+h^\top(t)dw(t),&t\in(s,T),\\
    dy(t)=(-A^{\top}y(t)-\sum_{i=1}^dC_i^\top Y_i(t))dt+\sum_{i=1}^dY_i(t)dw^i(t),
    &t\in(s,T),\\
    x(s)=\zeta\in\mathbb{R}^{nl},\;\;y(T)=F_{l,N}(x(T)).
\end{cases}
\end{equation}
   It is well known that
$(x^{s,\zeta}, y^{s,\zeta},Y^{s,\zeta})\in
     C_{\mathfrak{T}^s(w,\mathbb{P})}([s,T];\mathbb{R}^{nl})
     \times C_{\mathfrak{T}^s(w,\mathbb{P})}([s,T];\mathbb{R}^n)
     \times
     (L^2_{\mathfrak{T}^s(w,\mathbb{P})}(s,T;\mathbb{R}^n))^d$.
     Here, we note that $F_{l,N}\in C_c^\infty(\mathbb{R}^{nl};\mathbb{R}^n)$ and consequently, $F_{l,N}(x^{s,\zeta}(T))\in L^2_{\mathcal{F}^{w,s}_T}(\Omega,\mathbb{P};\mathbb{R}^n)$.
\begin{lemma}\label{yu-proposition-7-11-1}
     The $n$-dimensional parabolic equation:
\begin{equation}\label{yu-7-22-4}
\begin{cases}
    \partial_tU(t,\zeta)+[\mathcal{L}U](t,\zeta)+A^\top U(t,\zeta)+\sum_{i=1}^dC_i^\top
    \nabla U(t,\zeta)h^\top(t)f_i=0,&(t,\zeta)\in[0,T)\times \mathbb{R}^{nl},\\
    U(T,\zeta)=F_{l,N}(\zeta),&\zeta\in\mathbb{R}^{nl},
\end{cases}
\end{equation}
    admits a  unique bounded solution in $C^{1,2}([0,T]\times \mathbb{R}^{nl};\mathbb{R}^n)$,  where $(f_i)_{i=1}^d$ is the classical orthonormal basis of $\mathbb{R}^d$, and $\mathcal{L}\varphi=(L\varphi_1,L\varphi_2,\cdots,L\varphi_n)^\top$
    for $\varphi=(\varphi_1,\varphi_2,\cdots,\varphi_n)^\top\in C^2(\mathbb{R}^{nl};\mathbb{R}^n)$ with
$$L:=\frac{1}{2}
    \sum_{i,j}^{nl}(h^\top(t)h(t))_{i,j}\partial_{\zeta_i\zeta_j}^2+H^\top(t)
    \cdot\nabla.$$
    Furthermore, if $U$ is the solution of \eqref{yu-7-22-4} in $C^{1,2}([0,T]\times \mathbb{R}^{nl};\mathbb{R}^n)$, then
$y^{s,\zeta}(\cdot)=U(\cdot,x^{s,\zeta}(\cdot))$ and
    $Y^{s,\zeta}(\cdot)=\nabla U(\cdot,x^{s,\zeta}(\cdot))h^\top(\cdot)$ in $[s,T]$ for any $(s,\zeta)\in[0,T]\times \mathbb{R}^{nl}$.
\end{lemma}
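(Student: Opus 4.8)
The plan is to exploit that \eqref{yu-7-9-3} is a \emph{decoupled} linear forward--backward system: the forward equation for $x$ does not involve $(y,Y)$ and has deterministic, time-continuous, spatially constant coefficients (drift $-H^\top(t)$, diffusion $h^\top(t)$), so $x^{s,\zeta}(T)$ is a Gaussian vector; the backward equation is linear in $(y,Y)$; and \eqref{yu-7-22-4} is exactly the backward Kolmogorov equation attached to \eqref{yu-7-9-3}. Concretely, first I would introduce the $\mathbb{R}^{n\times n}$-valued linear SDE $d\Psi(r)=\Psi(r)A^\top\,dr+\sum_{i=1}^d\Psi(r)C_i^\top\,dw^i(r)$, write $\Psi^t(\cdot)$ for its solution on $[t,T]$ with $\Psi^t(t)=I_n$, and set
\[
U(t,\zeta):=E_{\mathbb{P}}\big[\Psi^t(T)\,F_{l,N}\big(x^{t,\zeta}(T)\big)\big],\qquad (t,\zeta)\in[0,T]\times\mathbb{R}^{nl}.
\]
Because the $\Psi$-equation is linear with coefficients bounded on $[0,T]$, one has $\sup_{t\in[0,T]}E_{\mathbb{P}}\big[\sup_{r\in[t,T]}|\Psi^t(r)|^p\big]<\infty$ for every $p\ge1$; because $F_{l,N}\in C_c^\infty$ (see \eqref{yu-7-11-3}) is bounded together with all its derivatives and enters $U$ only through the translation $\zeta\mapsto\zeta+\big({-\int_t^TH^\top\,dr+\int_t^Th^\top\,dw}\big)$, dominated convergence shows that $U$ is bounded, that $U(t,\cdot)\in C^\infty(\mathbb{R}^{nl};\mathbb{R}^n)$ with all spatial derivatives bounded uniformly in $t$, and that $\nabla U(t,\zeta)=E_{\mathbb{P}}[\Psi^t(T)\,\nabla F_{l,N}(x^{t,\zeta}(T))]$ is bounded.

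Next I would upgrade this to $U\in C^{1,2}([0,T]\times\mathbb{R}^{nl};\mathbb{R}^n)$ solving \eqref{yu-7-22-4}. One route is the flow identity $U(t,\zeta)=E_{\mathbb{P}}[\Psi^t(t')\,U(t',x^{t,\zeta}(t'))]$ for $t\le t'\le T$ (from the cocycle $\Psi^t(T)=\Psi^t(t')\Psi^{t'}(T)$ and independence of Brownian increments), which together with the spatial smoothness and a difference-quotient computation as $t'\downarrow t$ gives $C^1$-in-$t$ regularity and the equation pointwise, with $U(T,\cdot)=F_{l,N}$. An equivalent and cleaner route, since the second- and first-order coefficients of \eqref{yu-7-22-4} are spatially constant, is the spatial Fourier transform: $\widehat U(t,\cdot)=\Phi(t,T;\cdot)\,\widehat{F_{l,N}}$, where $\Phi(\cdot,T;\xi)$ is the fundamental matrix of the $\xi$-parametrised linear ODE with generator $-\tfrac12|h(t)\xi|^2 I_n+A^\top+\mathrm{i}\sum_i(h(t)\xi)_iC_i^\top$ (up to a purely imaginary scalar); a Gr\"onwall estimate on $\tfrac{d}{dt}|\widehat U(t,\xi)|^2$ using $-|h(t)\xi|^2+2c|h(t)\xi|\le c^2$ yields $|\widehat U(t,\xi)|\le e^{c(A,C)(T-t)}|\widehat{F_{l,N}}(\xi)|$ \emph{uniformly in $\xi$}, so $\widehat U(t,\cdot)$ and $\partial_t\widehat U(t,\cdot)$ are rapidly decreasing and $U\in C^{1,2}$ is bounded. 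For uniqueness among bounded $C^{1,2}$ solutions I would take a second one $V$, put $W:=U-V$ (a bounded $C^{1,2}$ solution of the homogeneous equation with $W(T,\cdot)=0$), and apply It\^{o}'s formula to $r\mapsto\Psi^s(r)W(r,x^{s,\zeta}(r))$ on $[s,T\wedge\tau_k]$ with $\tau_k:=\inf\{r\ge s:|x^{s,\zeta}(r)|>k\}$; the homogeneous equation cancels the $dr$-part, so this is a genuine martingale (its $dw$-integrand involves $\nabla W$ only on $\{|\cdot|\le k\}$), and passing to expectations and letting $k\to\infty$ (using $\|W\|_\infty$, $\sup_{[s,T]}|\Psi^s|\in L^1$ and $\tau_k\ge T$ eventually) gives $W(s,\zeta)=E_{\mathbb{P}}[\Psi^s(T)W(T,x^{s,\zeta}(T))]=0$, i.e. $V\equiv U$.

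Finally, for the representation, fix $(s,\zeta)$ and apply It\^{o}'s formula componentwise to $U(r,x^{s,\zeta}(r))$ on $[s,T]$; substituting $\partial_tU+\mathcal{L}U$ from \eqref{yu-7-22-4}, the bounded-variation part collapses to $\big(-A^\top U(r,x^{s,\zeta}(r))-\sum_{i=1}^dC_i^\top\nabla U(r,x^{s,\zeta}(r))h^\top(r)f_i\big)dr$ and the martingale part is $\sum_{i=1}^d\nabla U(r,x^{s,\zeta}(r))h^\top(r)f_i\,dw^i(r)$, so the pair $\big(U(\cdot,x^{s,\zeta}(\cdot)),\,(\nabla U(\cdot,x^{s,\zeta}(\cdot))h^\top(\cdot)f_i)_{i=1}^d\big)$ solves precisely the backward equation in \eqref{yu-7-9-3} with terminal datum $U(T,x^{s,\zeta}(T))=F_{l,N}(x^{s,\zeta}(T))$. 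Since $U$ and $\nabla U$ are bounded and $h$ is bounded on $[0,T]$, this pair lies in $C_{\mathfrak{T}^s(w,\mathbb{P})}([s,T];\mathbb{R}^n)\times(L^2_{\mathfrak{T}^s(w,\mathbb{P})}(s,T;\mathbb{R}^n))^d$, so by the well-posedness of linear BSDEs (\cite[Chapter 4, Theorem 4.2]{Lv-Zhang}) it must coincide with $(y^{s,\zeta},Y^{s,\zeta})$, which is the asserted identity.

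I expect the main obstacle to be the degeneracy of $h^\top(t)h(t)$ (generically $nl>d$, so it is never uniformly elliptic): it blocks a direct appeal to parabolic Schauder theory for the $C^{1,2}$ existence and regularity, and it means an arbitrary bounded $C^{1,2}$ solution carries no a priori gradient bound for the uniqueness step. Both difficulties are circumvented by using the spatial homogeneity of the leading coefficients together with the cancellation that $h(t)\xi=0$ kills the first-order coupling $\sum_i(h(t)\xi)_iC_i^\top$ exactly in the degenerate directions (so the Fourier/Feynman--Kac representation stays well-behaved), and by the localization $\tau_k$ in the uniqueness argument; the remaining steps are routine It\^{o}-calculus bookkeeping plus the cited BSDE well-posedness.
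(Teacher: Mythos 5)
Your proposal is correct, but it takes a genuinely different route from the paper's. The paper runs the Pardoux--Peng programme \cite{Pardoux-Peng-1990,Pardoux-Peng-1992}: $L^p$-estimates for the BSDE (Lemma \ref{yu-lemma-7-23-1}), Malliavin differentiability together with the key identity $\mathbb{D}^j_\theta y^{s,\zeta}(\theta)=Y_j^{s,\zeta}(\theta)$ (Lemma \ref{yu-lemma-7-23-2}), then differentiation of the BSDE in the parameter $\zeta$ by difference quotients; it defines $U(s,\zeta):=y^{s,\zeta}(s)$, obtains $U\in C^{1,2}$ and the PDE \eqref{yu-7-22-4} probabilistically (citing \cite{Pardoux-Peng-1992}), identifies $Y^{s,\zeta}=\nabla y^{s,\zeta}h^\top$ through the Malliavin chain rule, and gets uniqueness from the It\^{o} representation plus uniqueness of the BSDE. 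You instead exploit the linearity and the spatially constant coefficients: the explicit dual-process formula $U(t,\zeta)=E_{\mathbb{P}}[\Psi^t(T)F_{l,N}(x^{t,\zeta}(T))]$, with smoothness coming from the translation structure, and, for the time-regularity and the equation, the spatial Fourier transform, where the first-order symbols are purely imaginary and the good sign of $-|h(t)\xi|^2$ absorbs the $|h(t)\xi|$-growth of the $C_i^\top$-coupling, so the degeneracy of $h^\top h$ is indeed harmless; your localized $\Psi$-weighted martingale argument then gives uniqueness among bounded $C^{1,2}$ solutions without any a priori gradient bound, and your final verification (It\^{o} plus \cite[Chapter 4, Theorem 4.2]{Lv-Zhang}) coincides with the paper's Substep 2.2 (note that, like the paper, you read the drift term of $L$ as matching the drift $-H^\top$ of the forward equation in \eqref{yu-7-9-3}, which is what makes the bounded-variation part collapse as claimed). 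What each approach buys: yours is elementary and self-contained -- no Malliavin calculus, no Picard-type approximation of the FBSDE -- but it hinges on the linear driver and deterministic, spatially homogeneous forward coefficients, so it would not extend to nonlinear data; the paper's route is the general machinery and delivers, as a by-product, the identification of $Y^{s,\zeta}$ with the Malliavin derivative of $y^{s,\zeta}$, which your argument never sees.
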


      The proof of Lemma \ref{yu-proposition-7-11-1} follows a similar approach to the proofs of the main results presented in \cite{Pardoux-Peng-1990, Pardoux-Peng-1992}. For the sake of completeness in this paper, we provide a detailed proof in Appendix Section \ref{yu-sec-06}.

\subsection{LQ problem on infinite horizon}\label{yu-sec-2-2}
    In this subsection, we revisit some established results pertaining to the Linear-Quadratic (LQ) problem of stochastic control systems over an infinite horizon. We consider a fixed filtered probability space $\mathfrak{T}(w,\mathbb{P})$. For any control input
     $u\in  L^2_{\mathfrak{T}(w,\mathbb{P})}(\mathbb{R}^+;\mathbb{R}^m)$ and initial state $x_0\in\mathbb{R}^n$, we define the cost functional
     $\mathcal{J}(u, x_0)$  and introduce the set of admissible controls
     $\mathcal{U}_{ad}(x_0)$  as follows:
\begin{equation*}\label{yu-6-14-2}
    \mathcal{J}(u, x_0):=\frac{1}{2}E_{\mathbb{P}}
    \int_{\mathbb{R}^+}\left(|x(t;u,x_0)|_{\mathbb{R}^n}^2+
    |u(t)|_{\mathbb{R}^m}^2\right)dt
\end{equation*}
    and
$\mathcal{U}_{ad}(x_0):=\{v\in L^2_{\mathfrak{T}(w,\mathbb{P})}(\mathbb{R}^+;\mathbb{R}^m): x(\cdot;v,x_0)
    \in L^2_{\mathfrak{T}(w,\mathbb{P})}(\mathbb{R}^+;\mathbb{R}^n)\}$.
    Here, we recall that
    $x(\cdot;u,x_0)$ is the solution of the control system $[A,B,C,D,\mathfrak{T}(w,\mathbb{P})]$ with initial value $x_0$ and control $u$. It is possible that  $\mathcal{U}_{ad}(x_0)\neq L^2_{\mathfrak{T}(w,\mathbb{P})}(\mathbb{R}^+;\mathbb{R}^m)$ (even $\mathcal{U}_{ad}(x_0)=\emptyset$) for some $x_0$. If  $\mathcal{U}_{ad}(x_0)\neq \emptyset$ (or, equivalently, $\inf_{u\in L^2_{\mathfrak{T}(w,\mathbb{P})}(\mathbb{R}^+;\mathbb{R}^m)}\mathcal{J}(u, x_0)<+\infty$), we consider the following optimal control problem:


    \textbf{(LQ)$_{\mathfrak{T}(w,\mathbb{P})}$:} \emph{Find $u^*_{x_0}\in \mathcal{U}_{ad}(x_0)$ such that
\begin{equation}\label{yu-6-14-4}
    \mathcal{J}(u^*_{x_0}, x_0)=\mathcal{V}(x_0):=\inf_{u\in
    L^2_{\mathfrak{T}(w,\mathbb{P})}(\mathbb{R}^+;\mathbb{R}^m)}\mathcal{J}(u, x_0).
\end{equation}}
\begin{remark}
   The function $\mathcal{V}(\cdot)$ defined by \eqref{yu-6-14-4} is called to be the value function of  \textbf{(LQ)$_{\mathfrak{T}(w,\mathbb{P})}$}. For a given  $x_0\in\mathbb{R}^n$, if $\mathcal{U}_{ad}(x_0)\neq \emptyset$, we can demonstrate through standard arguments that there exists a unique $u^*_{x_0}\in \mathcal{U}_{ad}(x_0)$ that satisfies \eqref{yu-6-14-4} and then $\mathcal{V}(x_0)<+\infty$. If  $\mathcal{U}_{ad}(x_0)\neq \emptyset$ for any $x_0\in\mathbb{R}^n$,
     we refer to the problem as \textbf{(LQ)$_{\mathfrak{T}(w,\mathbb{P})}$} being solvable. In such cases, for a fixed
     $x_0\in\mathbb{R}^n$,  we denote the optimal trajectory as  $x^*(\cdot,x_0):=x(\cdot;x_0,u^*_{x_0})$.
\end{remark}

\begin{lemma}\label{yu-lemma-6-14-1}
    (\cite[Theorem 5.1]{Huang-Li-Yong-2015} and \cite[Theorem 3.2]{Sun-Yong-2017}) The following statements are equivalent:
\begin{enumerate}
  \item [$(i)$] The problem \textbf{(LQ)$_{\mathfrak{T}(w,\mathbb{P})}$} is solvable;

  \item [$(ii)$]  The Riccati equation:
   \begin{equation}\label{yu-6-14-5}
    PA+A^\top P+\sum_{i=1}^dC_i^\top PC_i+\Big(PB+\sum_{i=1}^dC_i^\top PD_i\Big)\Big(I+\sum_{i=1}^dD_i^\top PD_i\Big)^{-1}\Big(B^\top P+\sum_{i=1}^dD_i^\top PC_i\Big)=0
\end{equation}
    has a unique solution in $\mathbb{S}^n_+$.
\end{enumerate}

    Furthermore, if either of the above statements holds, then
     the value function of  \textbf{(LQ)$_{\mathfrak{T}(w,\mathbb{P})}$} is given by $\mathcal{V}(x_0)=\langle Px_0,x_0\rangle_{\mathbb{R}^n}$ for every $x_0\in\mathbb{R}^n$, where $P$ is the solution to the Riccati equation \eqref{yu-6-14-5}. Additionally, the optimal control
     $u^*_{x_0}$   and the optimal trajectory  $x^*(\cdot;x_0)$ are related as
    $u^*_{x_0}(t)=- (I+\sum_{i=1}^dD_i^\top PD_i)^{-1}(B^\top P+\sum_{i=1}^dD_i^\top PC_i)x^*(t,x_0)$ for a.e. $t\in\mathbb{R}^+$.
\end{lemma}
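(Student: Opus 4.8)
The statement to be established is the equivalence in Lemma~\ref{yu-lemma-6-14-1}, together with the representation formulas for $\mathcal{V}$, $u^*_{x_0}$ and $x^*(\cdot;x_0)$. Since the result is attributed to \cite[Theorem 5.1]{Huang-Li-Yong-2015} and \cite[Theorem 3.2]{Sun-Yong-2017}, the plan is not to re-derive it from scratch but to reconstruct the chain of reasoning so that the reader sees why each piece holds; a fully self-contained argument would essentially reproduce the infinite-horizon LQ theory. I would proceed in the two natural directions, using the standard completion-of-squares identity as the workhorse.

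First I would treat $(ii)\Rightarrow(i)$ together with the representation formulas. Assume the algebraic Riccati equation \eqref{yu-6-14-5} has a solution $P\in\mathbb{S}^n_+$, set $\Theta:=-(I+\sum_{i=1}^d D_i^\top P D_i)^{-1}(B^\top P+\sum_{i=1}^d D_i^\top P C_i)$, and apply It\^o's formula to $t\mapsto\langle P x(t),x(t)\rangle_{\mathbb{R}^n}$ along the trajectory $x(\cdot;u,x_0)$ on a finite window $[0,S]$. Using \eqref{yu-6-14-5} to cancel the drift terms, one obtains the completion-of-squares identity
\begin{equation*}
    E_{\mathbb{P}}\int_0^S\!\big(|x(t)|^2+|u(t)|^2\big)dt
    =\langle P x_0,x_0\rangle_{\mathbb{R}^n}-E_{\mathbb{P}}\langle P x(S),x(S)\rangle_{\mathbb{R}^n}
    +E_{\mathbb{P}}\int_0^S\!\big|(I+\textstyle\sum_i D_i^\top P D_i)^{1/2}(u(t)-\Theta x(t))\big|^2 dt.
\end{equation*}
For $u\in\mathcal{U}_{ad}(x_0)$ the left side is bounded uniformly in $S$, hence $E_{\mathbb{P}}\langle P x(S),x(S)\rangle\to 0$ along a subsequence; letting $S\to\infty$ gives $\mathcal{J}(u,x_0)\ge\frac12\langle P x_0,x_0\rangle$ with equality characterized by $u=\Theta x$. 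It then remains to check that the closed-loop equation $dx=(A+B\Theta)x\,dt+\sum_i(C_i+D_i\Theta)x\,dw^i$ has an $L^2_{\mathfrak{T}(w,\mathbb{P})}(\mathbb{R}^+;\mathbb{R}^n)$ solution, i.e.\ that $\Theta$ is stabilizing; this follows because $P>0$ makes $V(x)=\langle Px,x\rangle$ a strict Lyapunov function for the closed-loop generator (the same identity with $u=\Theta x$ shows $\frac{d}{dt}E_{\mathbb{P}}\langle Px(t),x(t)\rangle=-E_{\mathbb{P}}(|x|^2+|\Theta x|^2)$, forcing integrability of $E_{\mathbb{P}}|x(t)|^2$). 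Thus $u^*_{x_0}=\Theta x^*(\cdot;x_0)\in\mathcal{U}_{ad}(x_0)$, $\mathcal{U}_{ad}(x_0)\ne\emptyset$ for every $x_0$, (i) holds, and the representation formulas are read off.

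For $(i)\Rightarrow(ii)$ I would argue via finite-horizon approximation and monotonicity. Solvability of \textbf{(LQ)$_{\mathfrak{T}(w,\mathbb{P})}$} makes $\mathcal{V}$ finite on all of $\mathbb{R}^n$; by a standard dynamic-programming/time-homogeneity argument $\mathcal{V}$ is quadratic, $\mathcal{V}(x_0)=\langle P x_0,x_0\rangle$ for some $P\in\mathbb{S}^n$, and $P\ge 0$ since $\mathcal{J}\ge 0$. Introducing the finite-horizon value functions with solutions $P_S(\cdot)$ of the differential Riccati equation on $[0,S]$, one shows $P_S(0)$ is nondecreasing in $S$ and bounded above by $P$; the limit $P_\infty:=\lim_{S\to\infty}P_S(0)$ solves \eqref{yu-6-14-5}, and a comparison argument identifies $P_\infty=P$. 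Positive definiteness $P\in\mathbb{S}^n_+$ rather than merely $\mathbb{S}^n$ comes from solvability: if $\langle P x_0,x_0\rangle=0$ for some $x_0\ne 0$ then the optimal control is $0$ and the optimal trajectory is the uncontrolled one, which must lie in $L^2$, but $\mathcal{V}(x_0)=0$ together with optimality forces $x^*\equiv x_0\ne 0$ (via the identity above), contradicting $L^2$-integrability; hence $P>0$. Uniqueness of the $\mathbb{S}^n_+$-solution follows from the first direction: any such solution yields the same value function $\langle Px_0,x_0\rangle=\mathcal{V}(x_0)$, which pins down $P$.

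The main obstacle is the well-posedness and limiting behavior of the finite-horizon Riccati flow needed in $(i)\Rightarrow(ii)$ — establishing the monotone limit, its convergence to a solution of the \emph{algebraic} Riccati equation, and the identification of that limit with the matrix $P$ representing $\mathcal{V}$ — since this is where the genuine infinite-horizon analysis lives and where one must invoke the detachment argument ensuring the limiting feedback is stabilizing. The completion-of-squares computations and the Lyapunov integrability estimates in the other direction are routine. Because both directions are carried out in full in \cite{Huang-Li-Yong-2015, Sun-Yong-2017}, it is legitimate here to cite those references for the delicate steps and merely indicate the structure of the argument.
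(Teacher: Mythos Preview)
The paper does not prove this lemma at all: it is stated as a known result and attributed entirely to \cite[Theorem 5.1]{Huang-Li-Yong-2015} and \cite[Theorem 3.2]{Sun-Yong-2017}, with no argument given. Your reconstruction follows exactly the standard route taken in those references---completion of squares plus a Lyapunov/stabilizability check for $(ii)\Rightarrow(i)$, and a monotone finite-horizon Riccati limit for $(i)\Rightarrow(ii)$---so there is nothing to compare; your outline is essentially a compressed version of what the cited papers do.

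One inaccuracy worth fixing: in your argument for $P>0$, you write that $\mathcal{V}(x_0)=0$ ``forces $x^*\equiv x_0\ne 0$''. That is not what happens. If $\mathcal{V}(x_0)=0$ then $\mathcal{J}(u^*_{x_0},x_0)=0$, which forces both $u^*_{x_0}=0$ and $x^*(\cdot;x_0)=0$ in $L^2_{\mathfrak{T}(w,\mathbb{P})}(\mathbb{R}^+;\mathbb{R}^n)$; since $x^*(\cdot;x_0)$ has continuous sample paths with $x^*(0;x_0)=x_0\ne 0$, this is already a contradiction. The trajectory is not constant equal to $x_0$; rather, it would have to be identically zero while starting at a nonzero point. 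The conclusion is the same, but the mechanism you stated is wrong.
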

\begin{remark}\label{yu-remark-7-26-1}
    It is evident that the solvability of the Riccati equation \eqref{yu-6-14-5} is independent of the choice of the filtered probability space. Therefore, by combining Lemma \ref{yu-lemma-6-14-1} with \cite[Proposition 3.6]{Huang-Li-Yong-2015} (or \cite[Lemma 2.3]{Sun-Yong-Zhang-2021}), we can conclude that the stabilizability (as defined in Definition \ref{yu-defi-10-18-1} below) of the system \eqref{yu-6-14-1} is also independent of the choice of the filtered probability space.
\end{remark}

\section{Control problem and main results}\label{Se3}


\subsection{Observability}

    As is well-known, many control problems (such as controllability) for  the system
     $[A,B,C,D,\mathfrak{T}(w,\mathbb{P})]$ (i.e., \eqref{yu-6-14-1})  are closely related to the observability of the following dual-observation system (or observed BSDE) on the interval  $[0,T]$ (with $T>0$):
\begin{equation}\label{yu-7-26-10}
\begin{cases}
    (\mbox{Dual system})\;\;
\begin{cases}
     dy(t)=(-A^{\top}y(t)-\sum_{i=1}^dC_i^\top Y_i(t))dt+\sum_{i=1}^dY_i(t)dw^i(t),&t\in[0,T],\\
     y(T)=y_1\in L^2_{\mathcal{F}_T^{w}}(\Omega,\mathbb{P};\mathbb{R}^n);
\end{cases} \\
    (\mbox{Obseveration system})\;\;
z(t)=B^\top y(t)+\sum_{i=1}^d D_i^\top Y_i(t),\;\;\;\;t\in[0,T].
\end{cases}
\end{equation}
     In some cases, for simplicity, we denote the system in \eqref{yu-7-26-10} by
    $[A^\top,B^\top,C^\top,D^\top, \mathfrak{T}(w,\mathbb{P})]_T$ if $T$ is
     fixed, otherwise $[A^\top,B^\top,C^\top,D^\top, \mathfrak{T}(w,\mathbb{P})]$.
       Since the dual system in \eqref{yu-7-26-10} possesses a solution in
       $C_{\mathfrak{T}(w,\mathbb{P})}([0,T];\mathbb{R}^n)
    \times (L^2_{\mathfrak{T}(w,\mathbb{P})}(0,T;\mathbb{R}^n))^d$
     (see, for instance, \cite[Chapter 4, Theorem 4.2]{Lv-Zhang}), the function
      $z:[0,T]\times\Omega\to\mathbb{R}^m$ (also referred to as the output function) belongs to
      $L^2_{\mathfrak{T}(w,\mathbb{P})}(0,T;\mathbb{R}^m)$. Given
    $y_1\in L^2_{\mathcal{F}_T^{w}}(\Omega,\mathbb{P};\mathbb{R}^n)$,  we denote the solution of the dual-observation system \eqref{yu-7-26-10} by $(y(\cdot;y_1),Y(\cdot;y_1):=(Y_i(\cdot;y_1))_{i=1}^d,z(\cdot;y_1))$ or $(y^w(\cdot;y_1),Y^w(\cdot;y_1):=(Y^w_i(\cdot;y_1))_{i=1}^d,z^w(\cdot;y_1))$ to emphasize its dependence on the filtered probability space.

     We now introduce the concept of observability for the dual-observation system $[A^\top,B^\top,C^\top,D^\top, \mathfrak{T}(w,\mathbb{P})]_T$.

\begin{definition}\label{yu-definition-7-29-1}
    Given a fixed  $\mathfrak{T}(w,\mathbb{P})$.
    \begin{enumerate}
      \item [$(i)$]
           For a fixed $T>0$ and $\delta\in [0,1)$, the system
           $[A^\top,B^\top,C^\top,D^\top, \mathfrak{T}(w,\mathbb{P})]_T$
             is said to be  $\delta$-observable if there exists a constant  $c(\delta,T)>0$  such that
          \begin{eqnarray}\label{yu-6-22-4-001}
    E_{\mathbb{P}}|y(0;y_1)|^2_{\mathbb{R}^n}
    \leq c(\delta,T)E_{\mathbb{P}}\int_0^T|z(t)|^2_{\mathbb{R}^m}dt
    +\delta E_{\mathbb{P}}|y_1|_{\mathbb{R}^n}^2\;\;
    \mbox{for any}\;\;y_1\in L^2_{\mathcal{F}^w_T}(\Omega,\mathbb{P};\mathbb{R}^n).
\end{eqnarray}
         Specifically, if $\delta=0$,  the system $[A^\top,B^\top,C^\top,D^\top, \mathfrak{T}(w,\mathbb{P})]_T$
      is called  initially observable.

      \item [$(ii)$]
           If there exists a $T>0$ and $\delta\in(0,1)$ such that the system
           $[A^\top,B^\top,C^\top,D^\top, \mathfrak{T}(w,\mathbb{P})]_T$ is $\delta$-observable, then  $[A^\top,B^\top,C^\top,D^\top, \mathfrak{T}(w,\mathbb{P})]$
            is said to be  weakly observable. If there exists a $T>0$ such that
             $[A^\top,B^\top,C^\top,D^\top, \mathfrak{T}(w,\mathbb{P})]_T$
               is initially observable, then $[A^\top,B^\top,C^\top,D^\top, \mathfrak{T}(w,\mathbb{P})]$
                is called  initially observable.
    \end{enumerate}
\end{definition}
\begin{remark}
 Three points should be noted regarding Definition \ref{yu-definition-7-29-1}: (a)  The inequality \eqref{yu-6-22-4-001} is called the weak observability inequality of the system  $[A^\top,B^\top,C^\top,D^\top, \mathfrak{T}(w,\mathbb{P})]_T$,  $\delta$ and $c(\delta,T)$ in it are referred to as the observation error and the observability constant, respectively; (b)  By the Blumenthal 0-1 law, $y(0;y_1)$ in \eqref{yu-6-22-4-001} is deterministic $\mathbb{P}$-a.s.; (c) It is evident  that if $[A^\top,B^\top,C^\top,D^\top, \mathfrak{T}(w,\mathbb{P})]$
                is initially observable then it is also
                 weakly observable.
\end{remark}

    From Definition \ref{yu-definition-7-29-1}, the observability of the system
    $[A^\top,B^\top,C^\top,D^\top, \mathfrak{T}(w,\mathbb{P})]_T$     is significantly influenced by four key factors: (a) the filtered probability space $\mathfrak{T}(w,\mathbb{P})$; (b) the observation interval $[0,T]$; (c) the observation error $\delta$; and (d) the algebraic structure of
    $[A^\top,B^\top,C^\top,D^\top]$.  It is well-established that factor (d) can impact the observability of $[A^\top,B^\top,C^\top,D^\top, \mathfrak{T}(w,\mathbb{P})]_T$    (see, for example, \cite[Chapter 6, Theorem 6.8]{Lv-Zhang} and \cite{Peng-1994}). Furthermore, in \cite{ Liu-Wang-Xu-Yu, Ma-Wang-Yu, Trelat-Wang-Xu}, the influence of factors (b) and (c) on deterministic observation systems is discussed. By applying dual arguments, we can deduce that factors (b) and (c) have a similar impact on the observability of $[A^\top,B^\top,C^\top,D^\top, \mathfrak{T}(w,\mathbb{P})]_T$.
      Therefore, the primary objective of this paper is to determine \emph{whether factor (a) influences the observability of $[A^\top,B^\top,C^\top,D^\top, \mathfrak{T}(w,\mathbb{P})]_T$.}

    The first main result of this paper is stated as follows.
\begin{theorem}\label{yu-theorem-7-24-1}
    Let $T>0$ and $\delta\in[0,1)$. The following statement are equivalent:
\begin{enumerate}
  \item [$(i)$]  There exists a filtered probability space $\mathfrak{T}(w,\mathbb{P})$ such that the system $[A^\top,B^\top,C^\top,D^\top,\mathfrak{T}(w,\mathbb{P})]_T$ is  $\delta$-observable;
  \item [$(ii)$] For any filtered probability space $\mathfrak{T}(w,\mathbb{P})$, the system $[A^\top,B^\top,C^\top,D^\top,\mathfrak{T}(w,\mathbb{P})]_T$ is  $\delta$-observable.
\end{enumerate}
         Furthermore, if there exists a filtered probability space  $\mathfrak{T}(w,\mathbb{P})$ such that the system $[A^\top,B^\top,C^\top,D^\top,\mathfrak{T}(w,\mathbb{P})]_T$ is  $\delta$-observable, then the quantity
  \begin{eqnarray}\label{yu-7-24-10}
    \mathcal{O}_{op}(\delta,T,\mathfrak{T}(w,\mathbb{P}))
    :=\inf\Big\{c\geq 0:E_{\mathbb{P}}|y^w(0,y_1)|_{\mathbb{R}^n}^2
    \leq cE_{\mathbb{P}}\int_0^T|z^w(t;y_1)|^2_{\mathbb{R}^m}dt+\delta E_{\mathbb{P}}|y_1|^2_{\mathbb{R}^n},\nonumber\\
    \;\forall y_1\in L^2_{\mathcal{F}_T^w}(\Omega,\mathbb{P};\mathbb{R}^n)\Big\}
\end{eqnarray}
    is independent of $\mathfrak{T}(w,\mathbb{P})$.
\end{theorem}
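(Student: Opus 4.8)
The plan is to show that each of the three quantities occurring in the weak observability inequality \eqref{yu-6-22-4-001} --- namely $E_{\mathbb{P}}|y^w(0;y_1)|^2_{\mathbb{R}^n}$, $E_{\mathbb{P}}\int_0^T|z^w(t;y_1)|^2_{\mathbb{R}^m}dt$ and $E_{\mathbb{P}}|y_1|^2_{\mathbb{R}^n}$ --- depends only on the coefficients $(A^\top,B^\top,C^\top,D^\top)$, on $T$ and $d$, and on the terminal datum \emph{viewed as a functional of the driving path}, but not on the particular filtered probability space $\mathfrak{T}(w,\mathbb{P})$. Granting this, both the property of being $\delta$-observable and the infimum defining $\mathcal{O}_{op}$ reduce to conditions on one and the same family of numerical inequalities, indexed by objects common to all spaces; hence $(i)\Leftrightarrow(ii)$ and the invariance of $\mathcal{O}_{op}$ follow at once.

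First I would encode the data on path space. Since $y_1\in L^2_{\mathcal{F}^w_T}(\Omega,\mathbb{P};\mathbb{R}^n)$ is $\mathcal{F}^w_T$-measurable, the Doob--Dynkin lemma in the path-functional form of \cite[Chapter 1, Theorem 2.10]{Yong-Zhou-1999} produces a Borel map $\Phi$ on $W^d(0,T)$ with $y_1=\Phi(w(\cdot\wedge T))$ $\mathbb{P}$-a.s., and because $w$ pushes $\mathbb{P}$ forward to the $d$-dimensional Wiener measure $\mu^d_T$ on $W^d(0,T)$, the assignment $y_1\mapsto\Phi$ is a bijection modulo null sets between $L^2_{\mathcal{F}^w_T}(\Omega,\mathbb{P};\mathbb{R}^n)$ and $L^2(W^d(0,T),\mu^d_T;\mathbb{R}^n)$, the latter space being the \emph{same} for every $\mathfrak{T}(w,\mathbb{P})$. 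Likewise, using progressive measurability and the path-space measurability framework of Section \ref{Se2} (the case $k=1$ of \eqref{yu-6-27-00}), the solution and output of \eqref{yu-7-26-10} admit representations $y^w(t;y_1)=\Psi(t,w(\cdot\wedge t))$, $Y^w_i(t;y_1)=\Xi_i(t,w(\cdot\wedge t))$, $z^w(t;y_1)=\Theta(t,w(\cdot\wedge t))$ with progressively measurable path-functionals $\Psi,\Xi_i,\Theta$.

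The crux is that $\Psi,\Xi_i,\Theta$ may be chosen independently of $\mathfrak{T}(w,\mathbb{P})$, depending only on $\Phi$ (and on the coefficients, $T$, $d$). To this end I would solve \eqref{yu-7-26-10} once on the canonical Wiener space $\big(W^d(0,T),\mathcal{B}_T(W^d(0,T)),\mu^d_T\big)$ driven by its coordinate process, obtaining there a solution which is, by construction, of the above functional form; take $\Psi,\Xi_i,\Theta$ to be those functionals. Given now an arbitrary $\mathfrak{T}(w,\mathbb{P})$, I would verify that $t\mapsto\big(\Psi(t,w(\cdot\wedge t)),(\Xi_i(t,w(\cdot\wedge t)))_{i=1}^d\big)$ solves \eqref{yu-7-26-10} on $\mathfrak{T}(w,\mathbb{P})$: the terminal condition and the $dt$-terms transfer pathwise, while each stochastic integral transfers because the It\^o integral of a progressively measurable path-functional against the driving Brownian motion is itself an a.s.\ well-defined path-functional, one and the same on every space --- the ``invariance of the It\^o integral'', obtained by the very approximation/localization scheme used in the proof of Lemma \ref{yu-lemma-6-27-1} (and in \cite[Chapter 1]{Yong-Zhou-1999}); equivalently, this is the uniqueness in law of the associated forward-backward system recalled in the Introduction. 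By the uniqueness of the solution of \eqref{yu-7-26-10} on $\mathfrak{T}(w,\mathbb{P})$ (\cite[Chapter 4, Theorem 4.2]{Lv-Zhang}), this functional object must coincide with $(y^w(\cdot;y_1),Y^w(\cdot;y_1))$, and hence $z^w(t;y_1)=\Theta(t,w(\cdot\wedge t))$ for $t\in[0,T]$ as well.

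Finally I would read off the three quantities. Since $w(\cdot\wedge t)$ has law $\mu^d_t$ under $\mathbb{P}$ for every space, $E_{\mathbb{P}}|y_1|^2_{\mathbb{R}^n}=\int_{W^d(0,T)}|\Phi|^2_{\mathbb{R}^n}d\mu^d_T$ and $E_{\mathbb{P}}\int_0^T|z^w(t;y_1)|^2_{\mathbb{R}^m}dt=\int_0^T\!\int_{W^d(0,t)}|\Theta_\Phi(t,\cdot)|^2_{\mathbb{R}^m}d\mu^d_t\,dt$ depend only on $\Phi$, while $E_{\mathbb{P}}|y^w(0;y_1)|^2_{\mathbb{R}^n}=\int_{W^d(0,T)}|\Psi_\Phi(0,\cdot)|^2_{\mathbb{R}^n}d\mu^d_T$ depends only on $\Phi$ (and by the Blumenthal 0-1 law equals $|\Psi_\Phi(0,0)|^2_{\mathbb{R}^n}$). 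Consequently \eqref{yu-6-22-4-001} holds on $\mathfrak{T}(w,\mathbb{P})$ with constant $c$ if and only if the space-free family of inequalities $\int|\Psi_\Phi(0,\cdot)|^2 d\mu^d_T\le c\int_0^T\!\int|\Theta_\Phi(t,\cdot)|^2 d\mu^d_t\,dt+\delta\int|\Phi|^2 d\mu^d_T$ holds for all $\Phi\in L^2(W^d(0,T),\mu^d_T;\mathbb{R}^n)$; this proves $(i)\Leftrightarrow(ii)$ and exhibits $\mathcal{O}_{op}(\delta,T,\mathfrak{T}(w,\mathbb{P}))$ as the infimum of the admissible $c$ in this family, hence independent of $\mathfrak{T}(w,\mathbb{P})$. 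The main obstacle is the middle step: making precise that ``solving the BSDE commutes with plugging in the driving path'', i.e.\ the space-independence of $\Psi,\Xi_i,\Theta$; the rest is bookkeeping with Doob--Dynkin and push-forwards of the Wiener measure.
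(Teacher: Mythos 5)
Your route is genuinely different from the paper's. The paper never attempts a direct ``transfer the whole solution as a path functional'' argument; instead it works with a dense class of terminal data of the Markovian form $F_{l,N}(x^{w}(T))$ built from truncated Wick exponentials, represents the BSDE solution for such data through the deterministic PDE \eqref{yu-7-22-4} via Lemma \ref{yu-proposition-7-11-1} (so that $y=U(\cdot,x(\cdot))$, $Y=\nabla U(\cdot,x(\cdot))h^\top(\cdot)$ with a space-independent $U$), transfers the inequality using only equality in law of the forward diffusion $x^{w}$ and $x^{\widehat w}$ (Yamada--Watanabe, trivial here since the forward coefficients are deterministic), and then removes the restrictions by three successive approximations (continuous $h\to L^2$ $h$, $N\to\infty$, density of the span of Wick exponentials in $L^2_{\mathcal{F}^{\widehat w}_T}$, using the stability estimate \eqref{yu-7-25-1}). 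What your approach buys, if completed, is a cleaner and more conceptual statement: every $\mathcal{F}^w_T$-datum corresponds to a $\Phi\in L^2(W^d(0,T),\mu^d_T;\mathbb{R}^n)$ and all three quantities in \eqref{yu-6-22-4-001} are functionals of $\Phi$ alone, so no approximation layers are needed and the invariance of $\mathcal{O}_{op}$ is immediate. What the paper's route buys is that it only ever needs equality in law of a \emph{forward} SDE plus a deterministic Feynman--Kac representation, i.e.\ it never has to prove any transfer statement about backward equations or about stochastic integrals of abstract path functionals.

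That transfer statement is exactly where your proposal has a gap, and you flag it yourself. The assertion that ``the It\^o integral of a progressively measurable path-functional against the driving Brownian motion is one and the same path functional on every space'' is true, but it is not a consequence of the shifting/conditioning argument in Lemma \ref{yu-lemma-6-27-1}, nor of anything the paper actually proves or cites; the Introduction's references to weak uniqueness of BSDEs/FBSDEs are background, not a usable lemma of this precise form. To close it you would need either (a) a functional-representation/law-invariance theorem for stochastic integrals (the $(w,\xi)\mapsto\int\xi\,dw$ analogue of the Yamada--Watanabe transfer in \cite{Yamada-Watanabe-1971}, available e.g.\ in Kallenberg-type form but not quoted in this paper), applied to show that plugging $w$ into the canonical-space functionals $(\Psi,\Xi_i)$ produces a solution of \eqref{yu-7-26-10} on $\mathfrak{T}(w,\mathbb{P})$, after which uniqueness from \cite[Chapter 4, Theorem 4.2]{Lv-Zhang} identifies it with $(y^w,Y^w)$; or (b) a direct construction showing the functionals are universal, e.g.\ by running the Picard iteration for the BSDE: each iterate is a conditional expectation of a path functional with respect to $\mathcal{F}^w_t$ plus a martingale-representation term, both of which are determined by the Wiener measure alone, and the $L^2$-contraction rates depend only on the law, so the limit functionals coincide across spaces $\mu^d_T$-a.e. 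Either fix is routine but must be written out; also note the functionals $\Theta(t,\cdot)$ are only defined up to $\mu^d_t$-null sets, so the ``pathwise'' language should be replaced by a.e.\ statements throughout. With that lemma supplied, your bookkeeping with Doob--Dynkin, the push-forward to $\mu^d_T$, and the Blumenthal 0--1 law is correct and does yield both the equivalence $(i)\Leftrightarrow(ii)$ and the invariance of $\mathcal{O}_{op}$.
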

\begin{remark}\label{yu-remark-7-26-1}
\begin{enumerate}
  \item [$(i)$]      Theorem \ref{yu-theorem-7-24-1} informs us that the $\delta$-observability of \eqref{yu-7-26-10} solely depends on the algebraic structure of
      $[A^\top,B^\top,C^\top,D^\top]$ and the pair $(T,\delta)$.
            Consequently, when considering the (weak/initial) observability of the system \eqref{yu-7-26-10}, we may abbreviate $[A^\top,B^\top,C^\top,D^\top,\mathfrak{T}(w,\mathbb{P})]_T$ as $[A^\top,B^\top,C^\top,D^\top]_T$.
              This characteristic is referred to as the invariance of observability of \eqref{yu-7-26-10} with respect to the filtered probability space.

  \item [$(ii)$]
  The constant $\mathcal{O}_{op}(\delta,T,\mathfrak{T}(w,\mathbb{P}))$
    defined by \eqref{yu-7-24-10} is termed the optimal  $\delta$-observability constant. It can be readily demonstrated that this constant exists if the system
    $[A^\top,B^\top,C^\top,D^\top,\mathfrak{T}(w,\mathbb{P})]_T$ is  $\delta$-observable.

  \item [$(iii)$]
      In this paper, we assume that the measurable space $(\Omega,\mathcal{F})$ is fixed solely for the convenience of statement. However, upon examining the proof of Theorem \ref{yu-theorem-7-24-1}, it becomes evident that this assumption can be dispensed with.
\end{enumerate}
\end{remark}
    \subsection{Approximate null controllability with cost and stabilizability}
    We now introduce the definitions of approximate null controllability with cost and feedback stabilizability for the system \eqref{yu-6-14-1}.

\begin{definition}\label{yu-def-7-26-1}
    Given  $\mathfrak{T}(w,\mathbb{P})$.
\begin{enumerate}
  \item[$(i)$]
      For a fixed $\delta\in [0,1)$ and $T>0$, the system
       $[A,B,C,D, \mathfrak{T}(w,\mathbb{P})]$ is deemed  $\delta$-null  controllable in
        $[0,T]$ with cost if there exists a constant $c(\delta,T)>0$  such that, for every
        $x_0\in \mathbb{R}^n$,  there exists a control
        $u\in L^2_{\mathfrak{T}(w,\mathbb{P})}(0,T;\mathbb{R}^m)$
         satisfying
\begin{equation}\label{yu-6-15-5-b}
    \|u\|_{L^2_{\mathfrak{T}(w,\mathbb{P})}(0,T;\mathbb{R}^m)}\leq c(\delta,T)|x_0|_{\mathbb{R}^n}\;\;\mbox{and}\;\;
    E_{\mathbb{P}}|x(T;u,x_0)|^2_{\mathbb{R}^n}\leq \delta|x_0|^2_{\mathbb{R}^n}.
\end{equation}
        In particular, if $\delta=0$, the system  $[A,B,C,D, \mathfrak{T}(w,\mathbb{P})]$  is referred to as null controllable in $[0,T]$. The constant $c(\delta,T)$ in \eqref{yu-6-15-5-b} is termed the cost for the  $\delta$-null controllability of $[A,B,C,D, \mathfrak{T}(w,\mathbb{P})]$ in $[0,T]$.

  \item[$(ii)$] The system $[A,B,C,D,\mathfrak{T}(w,\mathbb{P})]$ is said to be  approximately null controllable with cost if for any $\delta\in(0,1)$, there is $T:=T(\delta)>0$ such that  the system $[A,B,C,D,\mathfrak{T}(w,\mathbb{P})]$ is $\delta$-null controllable in $[0,T]$ with cost.
\end{enumerate}
\end{definition}
\begin{definition}\label{yu-defi-10-18-1}
    Given $\mathfrak{T}(w,\mathbb{P})$, the system $[A,B,C,D, \mathfrak{T}(w,\mathbb{P})]$ is
     said to be feedback stabilizable (or simply stabilizable) if
    there is an $F\in \mathbb{R}^{n\times m}$, $\delta>0$ and $C:=C(F,\delta,\mathfrak{T}(w,\mathbb{P}))>0$ such that, for every  $x_0\in\mathbb{R}^n$, the solution $x_F(\cdot;x_0)$ of the following closed-loop system:
\begin{equation}\label{yu-7-29-1}
\begin{cases}
    dx(t)=(A+BF)x(t)dt+\sum_{i=1}^d(C_i+D_iF)x(t)dw^i(t),\;\;t\in\mathbb{R}^+,\\
    x(0)=x_0
\end{cases}
\end{equation}
    satisfies  the condition:
\begin{equation}\label{yu-7-29-2}
    E_{\mathbb{P}}|x_F(t;x_0)|_{\mathbb{R}^n}^2\leq Ce^{-\delta t}|x_0|_{\mathbb{R}^n}\;\;\mbox{as}\;\;t\to+\infty.
\end{equation}
\end{definition}
\begin{remark}
    It is well-established in the theory of stability that there exist various definitions, such as exponential stability, $L^2$-stability. However, according to \cite[Proposition 3.6]{Huang-Li-Yong-2015} (or \cite[Lemma 2.3]{Sun-Yong-Zhang-2021}), these definitions are equivalent in the context of finite-dimensional stochastic systems..
\end{remark}

    According to \cite[Chapter 7, Theorem 7.17]{Lv-Zhang}, for a fixed
    $T>0$, \emph{$[A,B,C,D, \mathfrak{T}(w,\mathbb{P})]$     is null controllable on
    $[0,T]$ if and only if the system $[A^\top,B^\top,C^\top,D^\top, \mathfrak{T}(w,\mathbb{P})]_T$
      is initially observable on $[0,T]$}. Combining this with Theorem \ref{yu-theorem-7-24-1}, we can derive the following Corollary \ref{Co.gjia1}.

\begin{corollary}\label{Co.gjia1}
Given  $T>0$, the following two statements are equivalent:
    \begin{enumerate}
      \item [$(i)$] For any filtered probability space $\mathfrak{T}(w,\mathbb{P})$, the  system $[A,B,C,D, \mathfrak{T}(w,\mathbb{P})]$  is null controllable on $[0,T]$.

      \item [$(ii)$] There exists  a  filtered probability space $\mathfrak{T}(w,\mathbb{P})$ such that the  system $[A,B,C,D, \mathfrak{T}(w,\mathbb{P})]$  is null controllable on  $[0,T]$.
    \end{enumerate}
\end{corollary}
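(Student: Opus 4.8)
The plan is to chain together the duality result \cite[Chapter 7, Theorem 7.17]{Lv-Zhang} with the invariance property established in Theorem \ref{yu-theorem-7-24-1}, applied in the special case $\delta=0$. Indeed, by Definition \ref{yu-definition-7-29-1}, initial observability of $[A^\top,B^\top,C^\top,D^\top,\mathfrak{T}(w,\mathbb{P})]_T$ is precisely its $0$-observability, so Theorem \ref{yu-theorem-7-24-1} already tells us that initial observability of the dual-observation system on $[0,T]$ holds for \emph{some} filtered probability space if and only if it holds for \emph{every} one. The corollary then follows by transporting this equivalence through the null-controllability/initial-observability duality on each fixed space.

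First I would dispose of the implication $(i)\Rightarrow(ii)$, which is immediate once one notes that filtered probability spaces of the required form exist (e.g.\ the canonical Wiener space), so a property holding for all such spaces holds for at least one.

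For the converse $(ii)\Rightarrow(i)$: assume there is a filtered probability space $\mathfrak{T}(w,\mathbb{P})$ on which $[A,B,C,D,\mathfrak{T}(w,\mathbb{P})]$ is null controllable on $[0,T]$. By \cite[Chapter 7, Theorem 7.17]{Lv-Zhang}, this is equivalent to the initial observability of $[A^\top,B^\top,C^\top,D^\top,\mathfrak{T}(w,\mathbb{P})]_T$, i.e.\ to its $0$-observability in the sense of Definition \ref{yu-definition-7-29-1}. Applying Theorem \ref{yu-theorem-7-24-1} with $\delta=0$, we conclude that $[A^\top,B^\top,C^\top,D^\top,\mathfrak{T}(w,\mathbb{P}')]_T$ is $0$-observable, hence initially observable, for every filtered probability space $\mathfrak{T}(w,\mathbb{P}')$. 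Invoking \cite[Chapter 7, Theorem 7.17]{Lv-Zhang} once more, now in the reverse direction and on each $\mathfrak{T}(w,\mathbb{P}')$ separately, yields that $[A,B,C,D,\mathfrak{T}(w,\mathbb{P}')]$ is null controllable on $[0,T]$ for every $\mathfrak{T}(w,\mathbb{P}')$, which is exactly statement $(i)$.

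The argument is a formal consequence of the two cited results, so there is no genuine analytic obstacle; the only point requiring a moment of care is to confirm that the duality in \cite[Chapter 7, Theorem 7.17]{Lv-Zhang} is applied \emph{on a single fixed filtered probability space} (it relates null controllability and initial observability on the same $\mathfrak{T}(w,\mathbb{P})$), so that all of the probability-space invariance is supplied by Theorem \ref{yu-theorem-7-24-1} and the chain of equivalences closes without circularity.
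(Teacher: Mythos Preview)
Your proposal is correct and follows exactly the approach the paper indicates: apply the duality \cite[Chapter~7, Theorem~7.17]{Lv-Zhang} on a fixed filtered probability space to pass between null controllability and initial observability, then invoke Theorem~\ref{yu-theorem-7-24-1} with $\delta=0$ to transfer the latter across all filtered probability spaces. The paper gives precisely this argument in the paragraph preceding the corollary, and your write-up fleshes it out with the same logical structure.
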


    Recently, in \cite{Trelat-Wang-Xu}, the authors introduced the concept of weak observability to investigate the exponential stabilizability of deterministic infinite-dimensional control systems. In that paper, they established the equivalence between weak observability, approximate null controllability with cost, and exponential stabilizability. \emph{It is noteworthy that in a stochastic framework, using weak observability (over a finite horizon) to characterize stabilizability (over an infinite horizon) is significantly more complex than the analogous problem for deterministic systems. Indeed, when studying the exponential stabilizability of stochastic control systems, the sample information varies across different equal-length intervals. This challenge motivates us to explore whether weak observability remains consistent across different probability spaces.}

Therefore, as another objective of this paper, \emph{we aim to extend the main result in \cite{Trelat-Wang-Xu} to stochastic control systems by leveraging Theorem \ref{yu-theorem-7-24-1}}.

    The second main result is Theorem \ref{yu-theorem-7-27-2} following.

\begin{theorem}\label{yu-theorem-7-27-2}
    The following statements are equivalent:
\begin{enumerate}
  \item [$(i)$]  For any filtered probability space $\mathfrak{T}(w,\mathbb{P})$, the system $[A,B,C,D, \mathfrak{T}(w,\mathbb{P})]$ is feedback stabilizable.

  \item [$(ii)$] For any filtered probability space $\mathfrak{T}(w,\mathbb{P})$, the system $[A,B,C,D, \mathfrak{T}(w,\mathbb{P})]$ is approximately  null controllable with  cost.

  \item [$(iii)$] There exists a filtered probability space $\mathfrak{T}(w,\mathbb{P})$ such that the system $[A,B,C,D, \mathfrak{T}(w,\mathbb{P})]$ is approximately  null controllable with  cost.

  \item [$(iv)$] There exists  a  filtered probability space $\mathfrak{T}(w,\mathbb{P})$ such that the system $[A^\top,B^\top,C^\top,D^\top,\mathfrak{T}(w,\mathbb{P})]$ is weakly observable.
\end{enumerate}
\end{theorem}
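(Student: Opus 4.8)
The plan is to prove the cycle of implications $(i)\Rightarrow(ii)\Rightarrow(iii)\Rightarrow(iv)\Rightarrow(i)$. The two tools that do the real work are Theorem \ref{yu-theorem-7-24-1}, which transports $\delta$-observability (with the \emph{same} optimal constant $\mathcal{O}_{op}(\delta,T)$) onto an arbitrary filtered probability space and, via Proposition \ref{yu-proposition-6-27-1}, onto the conditional spaces $\mathfrak{T}(\widehat w,\mathbb{P}(\cdot|\mathcal{F}^w_{kT})(\omega))$ attached to successive intervals; and Lemma \ref{yu-lemma-6-14-1} together with \cite[Proposition 3.6]{Huang-Li-Yong-2015}, which reduce stabilizability (on any, equivalently on some, probability space) to solvability of the infinite-horizon problem \textbf{(LQ)$_{\mathfrak{T}(w,\mathbb{P})}$}, i.e.\ to $\mathcal{U}_{ad}(x_0)\neq\emptyset$ for every $x_0$. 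I also use the $\delta$-version of the controllability/observability duality behind \cite[Chapter 7, Theorem 7.17]{Lv-Zhang}: for fixed $\delta\in[0,1)$ and $T>0$, the system $[A,B,C,D,\mathfrak{T}(w,\mathbb{P})]$ is $\delta$-null controllable in $[0,T]$ with cost if and only if $[A^\top,B^\top,C^\top,D^\top,\mathfrak{T}(w,\mathbb{P})]_T$ is $\delta$-observable, with matching constants. This is the stochastic analogue of the deterministic duality used in \cite{Trelat-Wang-Xu}, and I would record it as a lemma.

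For $(i)\Rightarrow(ii)$, fix $\mathfrak{T}(w,\mathbb{P})$ and a stabilizing feedback $F$ as in Definition \ref{yu-defi-10-18-1}, so that $E_{\mathbb{P}}|x_F(t;x_0)|^2_{\mathbb{R}^n}\le Ce^{-\delta t}|x_0|^2_{\mathbb{R}^n}$. Given $\delta'\in(0,1)$, pick $T=T(\delta')$ with $Ce^{-\delta T}\le\delta'$ and take $u(t):=Fx_F(t;x_0)$ on $[0,T]$; by uniqueness for \eqref{yu-6-14-1} one has $x(\cdot;u,x_0)=x_F(\cdot;x_0)$, hence $E_{\mathbb{P}}|x(T;u,x_0)|^2_{\mathbb{R}^n}\le\delta'|x_0|^2_{\mathbb{R}^n}$ and $\|u\|^2_{L^2_{\mathfrak{T}(w,\mathbb{P})}(0,T;\mathbb{R}^m)}\le|F|^2\int_{\mathbb{R}^+}E_{\mathbb{P}}|x_F(t;x_0)|^2_{\mathbb{R}^n}dt\le(|F|^2C/\delta)|x_0|^2_{\mathbb{R}^n}$, so $[A,B,C,D,\mathfrak{T}(w,\mathbb{P})]$ is $\delta'$-null controllable in $[0,T(\delta')]$ with cost; as $\delta'$ is arbitrary this gives $(ii)$. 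The implication $(ii)\Rightarrow(iii)$ is trivial, and $(iii)\Rightarrow(iv)$ is immediate from the duality lemma: if $[A,B,C,D,\mathfrak{T}(w,\mathbb{P})]$ is approximately null controllable with cost, fix $\delta_0=\tfrac12$ and the corresponding $T_0$; then $[A^\top,B^\top,C^\top,D^\top,\mathfrak{T}(w,\mathbb{P})]_{T_0}$ is $\delta_0$-observable, i.e.\ weakly observable, which is $(iv)$.

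The substantive step is $(iv)\Rightarrow(i)$. Suppose $[A^\top,B^\top,C^\top,D^\top,\mathfrak{T}(w,\mathbb{P})]$ is $\delta_0$-observable on $[0,T_0]$ for some filtered probability space. By Theorem \ref{yu-theorem-7-24-1} this $\delta_0$-observability on $[0,T_0]$ holds, with one and the same optimal constant $\mathcal{O}_{op}(\delta_0,T_0)$, on every filtered probability space; combining this with Proposition \ref{yu-proposition-6-27-1} it holds on each conditional space $\mathfrak{T}(\widehat w,\mathbb{P}(\cdot|\mathcal{F}^w_{kT_0})(\omega))$ for $\mathbb{P}$-a.s.\ $\omega$ and every $k$. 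Fix now an arbitrary $\mathfrak{T}(w,\mathbb{P})$ and $x_0\in\mathbb{R}^n$. Applying the duality lemma on each conditional space furnishes, for each $k$, a control $u^{(k)}$ on $[kT_0,(k+1)T_0]$ steering the current state $x(kT_0)$ to a state whose conditional (given $\mathcal{F}^w_{kT_0}$) second moment is at most $\delta_0|x(kT_0)|^2_{\mathbb{R}^n}$, with $L^2$-cost satisfying $E_{\mathbb{P}}\int_{kT_0}^{(k+1)T_0}|u^{(k)}(t)|^2_{\mathbb{R}^m}dt\le c^2E_{\mathbb{P}}|x(kT_0)|^2_{\mathbb{R}^n}$, where $c=c(\delta_0,T_0)$ does not depend on $k$. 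Gluing the $u^{(k)}$ into a single $u\in L^2_{\mathfrak{T}(w,\mathbb{P})}(\mathbb{R}^+;\mathbb{R}^m)$ and iterating yields $E_{\mathbb{P}}|x(kT_0;u,x_0)|^2_{\mathbb{R}^n}\le\delta_0^k|x_0|^2_{\mathbb{R}^n}$, whence, by the standard a priori estimate for \eqref{yu-6-14-1} on each interval, $E_{\mathbb{P}}\int_{\mathbb{R}^+}\big(|x(t;u,x_0)|^2_{\mathbb{R}^n}+|u(t)|^2_{\mathbb{R}^m}\big)dt<+\infty$. Thus $\mathcal{U}_{ad}(x_0)\neq\emptyset$ for every $x_0$, so \textbf{(LQ)$_{\mathfrak{T}(w,\mathbb{P})}$} is solvable; by Lemma \ref{yu-lemma-6-14-1} the Riccati equation \eqref{yu-6-14-5} has a solution in $\mathbb{S}^n_+$, whose induced feedback stabilizes $[A,B,C,D,\mathfrak{T}(w,\mathbb{P})]$, and by Lemma \ref{yu-lemma-6-14-1} and \cite[Proposition 3.6]{Huang-Li-Yong-2015} stabilizability is independent of the probability space, which is $(i)$.

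The main obstacle is the gluing step in $(iv)\Rightarrow(i)$: one must assemble a genuinely $\{\mathcal{F}^w_t\}$-progressively measurable $u$ on $\mathbb{R}^+$ out of controls defined interval-by-interval on the random conditional spaces $\mathfrak{T}(\widehat w,\mathbb{P}(\cdot|\mathcal{F}^w_{kT_0})(\omega))$, which requires a measurable-selection argument. I would handle it by exploiting linearity of \eqref{yu-6-14-1}: the minimal-$L^2$-norm steering control depends linearly---hence measurably, and in a way that is ``universal'' across the conditional spaces---on the state at time $kT_0$, which lets one define $u$ on $[kT_0,(k+1)T_0]$ by substituting $x(kT_0)(\omega)$ into this linear map, using the regular-conditional-probability framework already set up for Lemma \ref{yu-lemma-6-27-1}. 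A lesser point is to confirm that the controllability/observability duality lemma holds with matching constants \emph{uniformly} across probability spaces, so that $c(\delta_0,T_0)$ in the iteration does not drift with $k$; this follows from the usual adjoint/Hahn--Banach argument once the invariance of Theorem \ref{yu-theorem-7-24-1} is in hand.
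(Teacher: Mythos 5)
Your cycle $(i)\Rightarrow(ii)\Rightarrow(iii)\Rightarrow(iv)\Rightarrow(i)$ and the two main tools (the invariance Theorem \ref{yu-theorem-7-24-1} combined with Proposition \ref{yu-proposition-6-27-1}, and the reduction of stabilizability to solvability of \textbf{(LQ)$_{\mathfrak{T}(w,\mathbb{P})}$} via Lemma \ref{yu-lemma-6-14-1}) are exactly the paper's; steps $(i)\Rightarrow(ii)\Rightarrow(iii)\Rightarrow(iv)$ are fine (up to the harmless point that the dual of $\delta$-null controllability gives observation error $\tfrac{1+\delta}{2}$ rather than $\delta$). The genuine gap is in how you run the iteration in $(iv)\Rightarrow(i)$. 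You apply the controllability/observability duality \emph{on each conditional space} $\mathfrak{T}(\widehat w_k,\mathbb{P}(\cdot|\mathcal{F}^w_{kT_0})(\omega))$, obtaining for each $\omega$ a control adapted to the conditional filtration, and you then must assemble these $\omega$-dependent objects into one $\{\mathcal{F}^w_t\}$-progressively measurable control on $\mathbb{R}^+$. You flag this yourself as the main obstacle and propose to resolve it by linearity of the minimal-norm steering map; but that resolution is only sketched, and making it rigorous requires a functional representation of the steering control as a measurable functional of the shifted Brownian path plus an independence/conditioning argument to recover the cost and terminal bounds on the original space --- none of which is supplied. As written, the proof of the crucial implication is incomplete.

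The paper avoids this difficulty entirely by reversing the order of operations: instead of transporting \emph{controls} from the conditional spaces, it transports the \emph{observability inequality}. Using Proposition \ref{yu-proposition-6-27-1}, the restriction of the BSDE solution on $[0,(k+1)T]$ to $[kT,(k+1)T]$ solves the BSDE on the conditional space, so the conditional observability inequality (valid with the same constants by Theorem \ref{yu-theorem-7-24-1}) becomes, after taking $E_{\mathbb{P}}$ of both sides, an observability inequality on $[kT,(k+1)T]$ for the BSDE on the \emph{original} space with arbitrary terminal data in $L^2_{\mathcal{F}^w_{(k+1)T}}(\Omega,\mathbb{P};\mathbb{R}^n)$. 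Its duality result, Theorem \ref{yu-theorem-6-22-1}, is then stated and proved precisely in the form needed for the iteration: on a general subinterval $[s,T]$, with \emph{random} initial state $x_s\in L^2_{\mathcal{F}^w_s}(\Omega,\mathbb{P};\mathbb{R}^n)$, producing a control in $L^2_{\mathfrak{T}(w,\mathbb{P})}(s,T;\mathbb{R}^m)$, i.e. already progressively measurable with respect to the original filtration; concatenation is then immediate and no measurable selection is needed. Note also that this duality statement with conditioned initial data is itself a substantive ingredient which the paper proves via a Gramian-type operator and Lax--Milgram (Theorem \ref{yu-theorem-6-22-1}), whereas your plan records only the deterministic-initial-state version on $[0,T]$ and defers the proof to ``the usual adjoint argument''; for your iteration you in fact need the $L^2_{\mathcal{F}^w_{kT_0}}$-data version (or the full conditional machinery you sketch). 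If you adopt the paper's ordering --- transfer the inequality first, then apply the $[s,T]$ duality on the fixed space --- your argument closes without the selection step.
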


\begin{remark}\label{yu-remarl-8-20-1}
		 \begin{enumerate}
    \item[$(i)$]
        Theorem \ref{yu-theorem-7-27-2}, along with Remark \ref{yu-remark-7-26-1} in Section \ref{yu-sec-2-2}, reveals that the stabilizability and approximate null controllability with cost of the system \eqref{yu-6-14-1} are independent of the choice of the filtered probability space.

    \item[$(ii)$]
    Theorem \ref{yu-theorem-7-27-2} demonstrates that the approximate null controllability with cost of the system \eqref{yu-6-14-1} is strictly weaker than its null controllability. Specifically, when
    $C=0$ and $D=0$, the system \eqref{yu-6-14-1} is approximately null controllable with cost if and only if $\mbox{Rank}\,(\lambda I-A,B)=n$ for any $\lambda\in\{\eta\in\mathbb{C}:\mbox{Re}\,\eta\geq 0\}$.  However, this rank condition alone is insufficient to guarantee the null controllability of the system \eqref{yu-6-14-1}.

    \item[$(iii)$]
       According to \cite[Proposition 3.6]{Huang-Li-Yong-2015} (or \cite[Lemma 2.3]{Sun-Yong-Zhang-2021}) and Lemma \ref{yu-lemma-6-14-1}, one of the assertions in Theorem \ref{yu-theorem-7-27-2} holds if and only if the Riccati equation \eqref{yu-6-14-5} has a unique solution in
       $\mathbb{S}^n_+$. This provides an algebraic condition for the stabilizability and approximate null controllability with cost of the system \eqref{yu-6-14-1}.
      \end{enumerate}
\end{remark}

\section{The proof of Theorem \ref{yu-theorem-7-24-1}}\label{Se4}

\begin{proof}[\emph{{\bf Proof of Theorem \ref{yu-theorem-7-24-1}}}]
     Since $(ii)\Rightarrow (i)$ is evident, we only need to show $(i)\Rightarrow (ii)$. Let the filtered probability space $\mathfrak{T}(w,\mathbb{P})$ be fixed. Assume  that the system $[A^\top,B^\top,C^\top,D^\top,\mathfrak{T}(w,\mathbb{P})]_T$ is  $\delta$-observable on $[0,T]$.  Then, according to the definition of
     $\mathcal{O}_{op}:=\mathcal{O}_{op}(\delta,T,\mathfrak{T}(w,\mathbb{P}))$,   we obtain that,
     for any $y_1\in L^2_{\mathcal{F}_T^w(\Omega,\mathbb{P};\mathbb{R}^n)}$,
\begin{eqnarray}\label{yu-7-24-11}
    E_{\mathbb{P}}|y^w(0,y_1)|_{\mathbb{R}^n}^2
    \leq \mathcal{O}_{op}E_{\mathbb{P}}\int_0^T\Big|B^\top y^w(\sigma;y_1)+\sum_{i=1}^dD_i^\top Y^w_i(\sigma;y_1)\Big|^2_{\mathbb{R}^m}d\sigma+\delta E_{\mathbb{P}}|y_1|^2_{\mathbb{R}^n}.
\end{eqnarray}
      Next, consider an arbitrary filtered probability space $\mathfrak{T}(\widehat{w},\widehat{\mathbb{P}})$. We aim to prove that
\begin{equation}\label{yu-7-24-12}
    \mathcal{O}_{op}(\delta,T,\mathfrak{T}(\widehat{w},\widehat{\mathbb{P}}))\leq \mathcal{O}_{op}.
\end{equation}
       Upon establishing this inequality, the proof of $(i)\Rightarrow (ii)$   will be complete. Indeed, if  \eqref{yu-7-24-12} holds,  it is straightforward that  $[A^\top,B^\top,C^\top,D^\top,\mathfrak{T}(\widehat{w},\widehat{\mathbb{P}})]_T$ is  $\delta$-observable on $[0,T]$.
     Furthermore, by interchanging the roles of
    $\mathfrak{T}(\widehat{w},\widehat{\mathbb{P}})$ and $\mathfrak{T}(w,\mathbb{P})$ in the above arguments, we can conclude that
    $\mathcal{O}_{op}\leq \mathcal{O}_{op}(\delta,T,\mathfrak{T}(\widehat{w},\widehat{\mathbb{P}}))$.
     It follows that $\mathcal{O}_{op}= \mathcal{O}_{op}(\delta,T,\mathfrak{T}(\widehat{w},\widehat{\mathbb{P}}))$ and
    $\mathcal{O}_{op}$ is independent of the  the specific filtered probability space $\mathfrak{T}(w,\mathbb{P})$.

    We now proceed to prove \eqref{yu-7-24-12}. The proof is structured into several steps.

    {\bf Step 1: Fix $l\in \mathbb{N}^+$, $\alpha_k:=(\alpha_{k,1},\alpha_{k,2},\cdots,\alpha_{k,n})^\top\in\mathbb{R}^n$ and $h_k:=(h_{k,1},h_{k,2},\cdots,h_{k,n})\in (C([0,T];\mathbb{R}^d))^n$ for $k\in
    \{1,2,\ldots,l\}$  arbitrarily. Define
    $F_{l,N}$ according to   \eqref{yu-7-11-3} and set
    $\widehat{\xi}_{l,N}:=F_{l,N}(x^{\widehat{w}}(T))$, where $x^{\widehat{w}}$ is the solution of the equation on $\mathfrak{T}(\widehat{w},\widehat{\mathbb{P}})$:  \begin{equation}\label{yu-11-7-1}
dx^{\widehat{w}}(t)=-H^\top(t)dt+h^\top(t)d\widehat{w}(t)\;\;\mbox{for}
\;\;t\in(0,T)\;\;\mbox{with}\;\;x^{\widehat{w}}(0)=0.
\end{equation}
   Here,  $H$ and $h$ are  given by \eqref{yu-7-23-b0-001}. We aim to prove that
\begin{equation}\label{yu-7-24-14}
    E_{\widehat{\mathbb{P}}}|y^{\widehat{w}}(0;\widehat{\xi}_{l,N})|_{\mathbb{R}^n}^2
    \leq \mathcal{O}_{op}E_{\widehat{\mathbb{P}}}\int_0^T\Big|B^\top y^{\widehat{w}}(\sigma;\widehat{\xi}_{l,N})+\sum_{i=1}^dD_i^\top Y^{\widehat{w}}_i(\sigma;\widehat{\xi}_{l,N})\Big|^2_{\mathbb{R}^m}
    d\sigma+\delta E_{\widehat{\mathbb{P}}}|\widehat{\xi}_{l,N}|^2_{\mathbb{R}^n}.
\end{equation}}
    Let $x^{w}$ be the solution  of the following equation on $\mathfrak{T}(w,\mathbb{P})$:
\begin{equation*}\label{yu-7-24-13}
\begin{cases}
    dx^w(t)=-H^\top(t)dt+h^\top(t)dw(t),&t\in(0,T),\\
    x^w(0)=0,
\end{cases}
\end{equation*}
    and set $\xi_{l,N}:=F_{l,N}(x^w(T))$.
    According to Lemma \ref{yu-proposition-7-11-1}, we have
\begin{equation}\label{yu-7-24-15}
    y^{\widehat{w}}(\cdot;\widehat{\xi}_{l,N})
    =U(\cdot,x^{\widehat{w}}(\cdot))\;\;\mbox{and}
    \;\;Y^{\widehat{w}}(\cdot;\widehat{\xi}_{l,N})=\nabla U(\cdot,x^{\widehat{w}}(\cdot))h^\top(\cdot)
\end{equation}
and
\begin{equation}\label{yu-7-24-17}
     y^{w}(\cdot;\xi_{l,N})=U(\cdot,x^w(\cdot))\;\;\mbox{and}
    \;\;Y^{w}(\cdot;\xi_{l,N})=\nabla U(\cdot,x^w(\cdot))h^\top(\cdot)
\end{equation}
    in $[0,T]$, where $U\in C^{1,2}([0,T]\times \mathbb{R}^{nl};\mathbb{R}^n)$ is the unique solution to  the equation \eqref{yu-7-22-4}. By the Yamada-Watanabe theorem (see \cite{Yamada-Watanabe-1971}), we can conclude that
    $x^w$ and $x^{\widehat{w}}$ have the same probability law in $\mathbb{R}^+$.  This, along with \eqref{yu-7-24-11}, \eqref{yu-7-24-15}, and \eqref{yu-7-24-17}, implies that
\begin{eqnarray*}\label{yu-7-24-18}
    &\;&E_{\mathbb{P}}|y^{\widehat{w}}(0;\widehat{\xi}_{l,N})|^2_{\mathbb{R}^n}
    =|U(0,0)|_{\mathbb{R}^n}^2= E_{\mathbb{P}}|y^{w}(0;\xi_{l,N})|^2_{\mathbb{R}^n}\nonumber\\
    &\leq&\mathcal{O}_{op}E_{\mathbb{P}}\int_0^T\Big|B^\top y^w(\sigma;\xi_{l,N})+\sum_{i=1}^dD_i^\top Y^w_i(\sigma;\xi_{l,N})\Big|^2_{\mathbb{R}^m}d\sigma+\delta E_{\mathbb{P}}|\xi_{l,N}|^2_{\mathbb{R}^n}\nonumber\\
    &=&\mathcal{O}_{op}E_{\mathbb{P}}\int_0^T\Big|B^\top U(\sigma,x^w(\sigma))+\sum_{i=1}^dD_i^\top (\nabla U(\sigma,x^w(\sigma))h^\top(\sigma))_i\Big|^2_{\mathbb{R}^m}d\sigma
    +\delta E_{\mathbb{P}}|F_{l,N}(x^w(T))|^2_{\mathbb{R}^n}\nonumber\\
    &=&\mathcal{O}_{op}E_{\widehat{\mathbb{P}}}\int_0^T\Big|B^\top U(\sigma,x^{\widehat{w}}(\sigma))+\sum_{i=1}^dD_i^\top (\nabla U(\sigma,x^{\widehat{w}}(\sigma))h^\top(\sigma))_i\Big|^2_{\mathbb{R}^m}d\sigma
    +\delta E_{\widehat{\mathbb{P}}}|F_{l,N}(x^{\widehat{w}}(T))|^2_{\mathbb{R}^n}\nonumber\\
    &=&\mathcal{O}_{op}E_{\widehat{\mathbb{P}}}\int_0^T\Big|B^\top y^{\widehat{w}}(\sigma;\widehat{\xi}_{l,N})+\sum_{i=1}^dD_i^\top Y^{\widehat{w}}_i(\sigma;\widehat{\xi}_{l,N})\Big|^2_{\mathbb{R}^m}d\sigma
    +\delta E_{\widehat{\mathbb{P}}}|\widehat{\xi}_{l,N}|^2_{\mathbb{R}^n}.
\end{eqnarray*}
      Thus, \eqref{yu-7-24-14} holds.

{ \bf Step 2:  We prove that the claim in Step 1 also holds when
 $h_k:=(h_{k,1},h_{k,2},\cdots,h_{k,n})\in (L^2(0,T;\mathbb{R}^d))^n$ for $k\in
    \{1,2,\ldots,l\}$.}
     We first recall that there exists a constant  $c(T)>0$ such that  (see, for instance, \cite[Chapter 4, Theorem 4.2]{Lv-Zhang})
\begin{eqnarray}\label{yu-7-25-1}
    &\;&E_{\widehat{\mathbb{P}}}\Big(\sup_{0\leq t\leq T}|y^{\widehat{w}}(t;\xi_1)-y^{\widehat{w}}(t;\xi_2)|_{\mathbb{R}^n}^2\Big)
    +\sum_{i=1}^d E_{\widehat{\mathbb{P}}}\Big(\int_0^T|Y^{\widehat{w}}_i(t;\xi_1)
    -Y^{\widehat{w}}_i(t;\xi_2)|^2_{\mathbb{R}^n}dt
    \Big)\nonumber\\
    &=&E_{\widehat{\mathbb{P}}}\Big(\sup_{0\leq t\leq T}|y^{\widehat{w}}(t;\xi_1-\xi_2)|_{\mathbb{R}^n}^2\Big)
    +\sum_{i=1}^d E_{\widehat{\mathbb{P}}}\Big(\int_0^T|Y^{\widehat{w}}_i
    (t;\xi_1-\xi_2)|^2_{\mathbb{R}^n}dt
    \Big)\nonumber\\
    &\leq& c(T)E_{\widehat{\mathbb{P}}} |\xi_1-\xi_2|_{\mathbb{R}^n}^2
    \;\;\mbox{for any}\;\;\xi_1,\xi_2\in L^2_{\mathcal{F}_T^{\widehat{w}}}(\Omega,\widehat{\mathbb{P}};\mathbb{R}^n).
\end{eqnarray}
      We now choose an arbitrary $\varepsilon>0$. It is well known that  there exists  $h^\varepsilon_k:=(h^\varepsilon_{k,1},h^\varepsilon_{k,2},\cdots,
    h^\varepsilon_{k,n})\in (C([0,T];\mathbb{R}^d))^n$ for $k\in
    \{1,2,\ldots,l\}$ such that
\begin{equation}\label{yu-7-25-2}
    \int_0^T\big(|H^\varepsilon(t)-H(t)|^2_{\mathbb{R}^{nl}}
    +|h^\varepsilon(t)-h(t)|^2_{\mathbb{R}^{d\times nl}}\big)dt<\varepsilon.
\end{equation}
    where
\begin{equation}\label{yu-7-25-2-bbb}
h^\varepsilon:=(h^\varepsilon_1,h^\varepsilon_2,\cdots,h^\varepsilon_l)\;\;\mbox{and}\;\; H^\varepsilon:=(H^\varepsilon_1,H^\varepsilon_2,
    \cdots,H^\varepsilon_l)
\end{equation}
    with $H_k^\varepsilon:=\frac{1}{2}(|h^\varepsilon_{k,1}|^2_{\mathbb{R}^d}, |h^\varepsilon_{k,2}|^2_{\mathbb{R}^d},
    \cdots,|h^\varepsilon_{k,n}|^2_{\mathbb{R}^d})$.
    Let $x^{\widehat{w}}_\varepsilon$ be the solution of the equation
\begin{equation}\label{yu-7-25-3}
    \begin{cases}
    dx^{\widehat{w}}_\varepsilon(t)=-(H^\varepsilon(t))^\top dt+(h^\varepsilon(t))^\top d\widehat{w}(t),&t\in(0,T),\\
    x^{\widehat{w}}_\varepsilon(0)=0.
\end{cases}
\end{equation}
    Using  the H\"{o}lder inequality and \eqref{yu-7-25-2},  it is straightforward to verify that
$E_{\widehat{\mathbb{P}}}|x_\varepsilon^{\widehat{w}}(T)
-x^{\widehat{w}}(T)|_{\mathbb{R}^{nl}}^2
\leq 2(1+T)\varepsilon$.
     This, combined with the definition of  $F_{l,N}$, yields
\begin{equation}\label{yu-7-25-5}
    E_{\widehat{\mathbb{P}}}
    |F_{l,N}(x_\varepsilon^{\widehat{w}}(T))
    -F_{l,N}(x^{\widehat{w}}(T))|^2_{\mathbb{R}^n}
    \leq c(N,l,F)E_{\widehat{\mathbb{P}}}|x_\varepsilon^{\widehat{w}}(T)
-x^{\widehat{w}}(T)|_{\mathbb{R}^{nl}}^2
\leq c(N,l,F,T)\varepsilon.
\end{equation}
     (Note here that  $F_{l,N}(\xi_1)-F_{l,N}(\xi_2)
=\int_0^1\nabla F_{l,N}(\xi_2
+\rho(\xi_1-\xi_2))d\rho
(\xi_1-\xi_2)$ for any $\xi_1,\xi_2\in\mathbb{R}^{nl}$.) Let $\widehat{\xi}_{l,N}^\varepsilon:=F_{l,N}(x^{\widehat{w}}_\varepsilon(T))$ and $\widehat{\xi}_{l,N}:=F_{l,N}(x^{\widehat{w}}(T))$. Then, by \eqref{yu-7-25-1} and \eqref{yu-7-25-5}, we can readily deduce that
\begin{eqnarray}\label{yu-7-25-6}
     &\;&E_{\widehat{\mathbb{P}}}|y^{\widehat{w}}(0;\widehat{\xi}_{l,N}^\varepsilon
     -\widehat{\xi}_{l,N})|^2_{\mathbb{R}^n}\nonumber\\
     &\;&+E_{\widehat{\mathbb{P}}}\int_0^T\Big|B^\top y^{\widehat{w}}(\sigma;\widehat{\xi}_{l,N}^\varepsilon-
     \widehat{\xi}_{l,N})+\sum_{i=1}^dD_i^\top Y_i^{\widehat{w}}(\sigma;\widehat{\xi}_{l,N}^\varepsilon-\widehat{\xi}_{l,N})
     \Big|^2_{\mathbb{R}^m}d\sigma\leq c(N,l,F,T)\varepsilon.
\end{eqnarray}
     Utilizing \eqref{yu-7-25-1} and the definitions of $F_{l,N}$, $\widehat{\xi}_{l,N}$  and $\widehat{\xi}_{l,N}^\varepsilon$, we also can deduce  that
\begin{equation}\label{yu-7-25-7}
    E_{\widehat{\mathbb{P}}}|y^{\widehat{w}}(0;\widehat{\xi}_{l,N}^\varepsilon)|^2_{\mathbb{R}^n}
    +E_{\widehat{\mathbb{P}}}\int_0^T\Big|B^\top y^{\widehat{w}}(\sigma;\widehat{\xi}_{l,N})+\sum_{i=1}^dD_i^\top Y_i^{\widehat{w}}(\sigma;\widehat{\xi}_{l,N})
     \Big|^2_{\mathbb{R}^m}d\sigma\leq c(N,F,T).
\end{equation}
     Therefore, by \eqref{yu-7-25-6}, \eqref{yu-7-25-7}, and the H\"{o}lder inequality, we obtain
\begin{equation}\label{yu-7-25-8}
    E_{\widehat{\mathbb{P}}}|y^{\widehat{w}}(0;\widehat{\xi}_{l,N})|^2_{\mathbb{R}^n}\leq
   E_{\widehat{\mathbb{P}}}|y^{\widehat{w}}(0;\widehat{\xi}_{l,N}^\varepsilon)|^2_{\mathbb{R}^n}
    +c(N,l,F,T)(\sqrt{\varepsilon}+\varepsilon)
\end{equation}
    and
\begin{eqnarray}\label{yu-7-25-9}
    &\;&E_{\widehat{\mathbb{P}}}\int_0^T\Big|B^\top y^{\widehat{w}}(\sigma;\widehat{\xi}^\varepsilon_{l,N})+\sum_{i=1}^dD_i^\top Y_i^{\widehat{w}}(\sigma;\widehat{\xi}^\varepsilon_{l,N})
     \Big|^2_{\mathbb{R}^m}d\sigma\nonumber\\
     &\leq& E_{\widehat{\mathbb{P}}}\int_0^T\Big|B^\top y^{\widehat{w}}(\sigma;\widehat{\xi}_{l,N})+\sum_{i=1}^dD_i^\top Y_i^{\widehat{w}}(\sigma;\widehat{\xi}_{l,N})
     \Big|^2_{\mathbb{R}^m}d\sigma+c(N,l,F,T)(\sqrt{\varepsilon}+\varepsilon).
\end{eqnarray}
     Combining these results with the claim in {\bf Step 1} and \eqref{yu-7-25-5}, we can conclude that
\begin{eqnarray*}\label{yu-7-25-10}
    E_{\widehat{\mathbb{P}}}
    |y^{\widehat{w}}(0;\widehat{\xi}_{l,N})|^2_{\mathbb{R}^n}\leq
    \mathcal{O}_{op}E_{\widehat{\mathbb{P}}}\int_0^T\Big|B^\top y^{\widehat{w}}(\sigma;\widehat{\xi}_{l,N})+\sum_{i=1}^dD_i^\top Y_i^{\widehat{w}}(\sigma;\widehat{\xi}_{l,N})
     \Big|^2_{\mathbb{R}^m}d\sigma\nonumber\\
     + \delta E_{\widehat{\mathbb{P}}}|\widehat{\xi}_{l,N}|^2_{\mathbb{R}^n}
     +c(N,F,T,\mathcal{O}_{op},\delta)(\sqrt{\varepsilon}+\varepsilon).
\end{eqnarray*}
      Due to the arbitrariness of  $\varepsilon$ we can conclude that \eqref{yu-7-24-14} holds when $h_k:=(h_{k,j})_{j=1}^n\in (L^2(0,T;\mathbb{R}^d))^n$ for $k\in
    \{1,2,\ldots,l\}$.

    {\bf Step 3:  Let $l\in \mathbb{N}^+$, $\alpha_k:=(\alpha_{k,1},\alpha_{k,2},\cdots,\alpha_{k,n})^\top\in\mathbb{R}^n$ and $h_k:=(h_{k,1},h_{k,2},\cdots,h_{k,n})\in (L^2(0,T;\mathbb{R}^d))^n$ for $k\in
    \{1,2,\ldots,l\}$ be fixed arbitrarily. Define
    $F_l$   by \eqref{yu-7-23-bbbb-1} and let
    $\widehat{\xi}_{l}:=F_l(x^{\widehat{w}}(T))$, where $x^{\widehat{w}}$ is the solution of the equation \eqref{yu-11-7-1} on $\mathfrak{T}(\widehat{w},\widehat{\mathbb{P}})$. We will demonstrate that \eqref{yu-7-24-14} holds with
       $\widehat{\xi}_{l,N}$ replaced  by $\widehat{\xi}_{l}$.}
         By the definition of $F_l$,  it is straightforward to show that
$F_l(x^{\widehat{w}}(T))\in L^2_{\mathcal{F}^{\widehat{w}}_T}(\Omega,\widehat{\mathbb{P}};\mathbb{R}^n)$.
 Furthermore, by applying the dominated convergence theorem, we obtain
\begin{equation*}\label{yu-7-11-4}
    \lim_{N\to+\infty}E_{\widehat{\mathbb{P}}}|\widehat{\xi}_{l,N}
    -\widehat{\xi}_{l}|^2_{\mathbb{R}^n}=
   \lim_{N\to+\infty}
    E_{\widehat{\mathbb{P}}}|F_{l,N}(x^{\widehat{w}}(T))
    -F_l(x^{\widehat{w}}(T))|^2_{\mathbb{R}^n}
    =0.
\end{equation*}
     Hence, using \eqref{yu-7-25-1}, we can deduce
\begin{equation*}\label{yu-7-25-12}
    \lim_{N\to+\infty}\Big[
    E_{\widehat{\mathbb{P}}}\Big(\sup_{0\leq t\leq T}|y^{\widehat{w}}(t;\widehat{\xi}_{l,N})
    -y^{\widehat{w}}(t;\widehat{\xi}_{l})|_{\mathbb{R}^n}^2\Big)
    +\sum_{i=1}^d E_{\widehat{\mathbb{P}}}\Big(\int_0^T|Y^{\widehat{w}}_i(t;\widehat{\xi}_{l,N})
    -Y^{\widehat{w}}_i(t;\widehat{\xi}_{l})|^2_{\mathbb{R}^n}dt
    \Big)\Big]=0.
\end{equation*}
    This, along with the claim in \emph{Step 2}, yields that \eqref{yu-7-24-14} holds with replacing $\widehat{\xi}_{l,N}$ by $\widehat{\xi}_{l}$.

 {\bf Step 4: Completion of  the proof.}
    To complete the proof, we remind ourselves that the set
 \begin{equation*}\label{yu-7-25-13}
    \mathcal{X}:=\mbox{span}\Big\{\exp\Big\{\int_0^Th^\top (t)d\widehat{w}(t)
    -\frac{1}{2}\int_0^T|h(t)|^2_{\mathbb{R}^d}dt\Big\}:h\in L^2(0,T;\mathbb{R}^d)\Big\}
\end{equation*}
    is dense in $L^2_{\mathcal{F}^{\widehat{w}}_T}(\Omega,\widehat{\mathbb{P}};\mathbb{R})$ (refer, for example,  \cite[Chapter 2, Lemma 2.146]{Lv-Zhang}). Let $\varepsilon>0$ and $\xi\in  L^2_{\mathcal{F}^{\widehat{w}}_T}(\Omega,\widehat{\mathbb{P}};\mathbb{R}^n)$ with $E_{\widehat{\mathbb{P}}}|\xi|^2_{\mathbb{R}^n}=1$ be fixed arbitrarily.   Given the density of  $\mathcal{X}$ in $L^2_{\mathcal{F}^{\widehat{w}}_T}(\Omega,\widehat{\mathbb{P}};\mathbb{R})$,
     we can deduce the existence of   $l:=l(\varepsilon)\in  \mathbb{N}^+$,   $\{\alpha^{\varepsilon}_k\}_{k=1}^l$ and $\{h^{\varepsilon}_k\}_{k=1}^l$ with  $\alpha^{\varepsilon}_k:=(\alpha^{\varepsilon}_{k,1},\alpha^{\varepsilon}_{k,2},
    \cdots,\alpha^{\varepsilon}_{k,n})^\top\in\mathbb{R}^n$ and $h^{\varepsilon}_k:=(h^{\varepsilon}_{k,1},h^{\varepsilon}_{k,2},
    \cdots,h^{\varepsilon}_{k,n})\in (L^2(0,T;\mathbb{R}^d))^n$   such that
$E_{\widehat{\mathbb{P}}}\big|\sum_{k=1}^lG^\varepsilon
    _k(\widehat{w})-\xi\big|^2_{\mathbb{R}^n}\leq \varepsilon$,
where
$G^\varepsilon_k(\widehat{w}):=(G^\varepsilon_{k,1}(\widehat{w}),$
   $G^\varepsilon_{k,2}(\widehat{w}),
   \cdots,G^\varepsilon_{k,n}(\widehat{w}))^\top$
    with
$G^\varepsilon_{k,j}(\widehat{w}):=\alpha_{k,j}\exp\big(\int_0^T
    (h^\varepsilon_{k,j}(s))^\top
    d\widehat{w}(s)-\frac{1}{2}\int_0^T
     |h^\varepsilon_{k,j}(s)|^2_{\mathbb{R}^d}ds\big)$
     for each $j=1,2,\ldots,n$.
 For this $\{\alpha_k\}_{k=1}^l$, we define $F_l$  according to \eqref{yu-7-23-bbbb-1}. It is evident that $
\sum_{k=1}^lG^\varepsilon_k(\widehat{w})=F_l(x_\varepsilon^{\widehat{w}}(T)),
$
    where $x_\varepsilon^{\widehat{w}}(\cdot)$ is the solution of the equation
    \eqref{yu-7-25-3} with  $H^\varepsilon(\cdot)$ and $h^\varepsilon(\cdot)$  defined by \eqref{yu-7-25-2-bbb}. We let $\xi_\varepsilon:=F_l(x_\varepsilon^{\widehat{w}}(T))$.
    Utilizing \eqref{yu-7-25-1} and employing a similar approach to the one used in proving \eqref{yu-7-25-8} and \eqref{yu-7-25-9}, we can readily demonstrate that
$|y^{\widehat{w}}(0;\xi)|^2_{\mathbb{R}^n}\leq
    |y^{\widehat{w}}(0;\xi_\varepsilon)|^2_{\mathbb{R}^n}
    +c(T)(\sqrt{\varepsilon}+\varepsilon)$
    and
\begin{eqnarray*}\label{yu-7-26-3}
    E_{\widehat{\mathbb{P}}}\int_0^T\Big|B^\top y^{\widehat{w}}(\sigma;\xi_\varepsilon)+\sum_{i=1}^dD_i^\top Y_i^{\widehat{w}}(\sigma;\xi_\varepsilon)
     \Big|^2_{\mathbb{R}^m}d\sigma
     \leq E_{\widehat{\mathbb{P}}}\int_0^T\Big|B^\top y^{\widehat{w}}(\sigma;\xi)+\sum_{i=1}^dD_i^\top Y_i^{\widehat{w}}(\sigma;\xi)
     \Big|^2_{\mathbb{R}^m}d\sigma\nonumber\\
     +c(T)(\sqrt{\varepsilon}+\varepsilon).
\end{eqnarray*}
     Combining these observations with the assertion made in {\bf Step 3}, we infer that
\begin{eqnarray*}\label{yu-7-26-4}
    E_{\widehat{\mathbb{P}}}|y^{\widehat{w}}(0;\xi)|^2_{\mathbb{R}^n}\leq\mathcal{O}_{op}
    E_{\widehat{\mathbb{P}}}\int_0^T\Big|B^\top y^{\widehat{w}}(\sigma;\xi)+\sum_{i=1}^dD_i^\top Y_i^{\widehat{w}}(\sigma;\xi)
     \Big|^2_{\mathbb{R}^m}d\sigma+\delta E_{\widehat{\mathbb{P}}}|\xi|^2_{\mathbb{R}^n}
     +c(T,\mathcal{O}_{op},\delta)(\sqrt{\varepsilon}+\varepsilon).
\end{eqnarray*}
    This, coupled with the second-order homogeneity (in terms of $\xi$) of weak observability and the arbitrariness of $\varepsilon$ and $\xi$, leads to the conclusion that
\begin{equation*}
    E_{\widehat{\mathbb{P}}}|y^{\widehat{w}}(0;\xi)|^2_{\mathbb{R}^n}\leq\mathcal{O}_{op}
    E_{\widehat{\mathbb{P}}}\int_0^T\Big|B^\top y^{\widehat{w}}(\sigma;\xi)+\sum_{i=1}^dD_i^\top Y_i^{\widehat{w}}(\sigma;\xi)
     \Big|^2_{\mathbb{R}^m}d\sigma+\delta E_{\widehat{\mathbb{P}}}|\xi|^2_{\mathbb{R}^n}\;\;\mbox{for any}\;\;\xi\in L^2_{\mathcal{F}^{\widehat{w}}_T}(\Omega,\widehat{\mathbb{P}};\mathbb{R}^n).
\end{equation*}
     Consequently, \eqref{yu-7-24-12} holds. Thus, the proof is complete.
\end{proof}
\section{The proof of Theorem \ref{yu-theorem-7-27-2}}\label{Se5}

    Before presenting the proof of Theorem \ref{yu-theorem-7-27-2}, we first provide some characterizations of approximate null controllability with cost in a general time interval.
\begin{definition}\label{yu-def-6-15-1}
    Given $\mathfrak{T}(w,\mathbb{P})$, $\delta\in (0,1)$ and $0<s<T$. The system $[A,B,C,D, \mathfrak{T}(w,\mathbb{P})]$ is said to be  $\delta$-null controllable in  $[s,T]$ with cost if  there exists a constant $c(\delta,s,T)>0$ such that, for any  $x_s\in L^2_{\mathcal{F}^w_s}(\Omega,\mathbb{P};\mathbb{R}^n)$,   there exists a control  $u\in L^2_{\mathfrak{T}(w,\mathbb{P})}(s,T;\mathbb{R}^m)$  satisfying
\begin{equation}\label{yu-6-15-5}
    \|u\|_{L^2_{\mathfrak{T}(w,\mathbb{P})}(s,T;\mathbb{R}^m)}\leq c(\delta,s,T)\|x_s\|_{L^2_{\mathcal{F}^w_s}(\Omega,\mathbb{P};\mathbb{R}^n)}
    \;\;\mbox{and}\;\;
    E_{\mathbb{P}}|x(T;u,x_s,s)|^2_{\mathbb{R}^n}\leq \delta\|x_s\|^2_{L^2_{\mathcal{F}^w_s}(\Omega,\mathbb{P};\mathbb{R}^n)},
\end{equation}
 where  and in what follows  $x(\cdot;u,x_s,s)$ is the solution to  the following equation
\begin{equation}\label{yu-7-3-1}
\begin{cases}
      dx(t)=(Ax(t)+Bu(t))dt+\sum_{i=1}^d(C_ix(t)+D_iu(t))dw^i(t),\;\;t\in(s,T),\\
      x(s)=x_s.
\end{cases}
\end{equation}
    The constant  $c(\delta,s,T)$  is referred to as the cost for the   $\delta$-null controllability of $[A,B,C,D, \mathfrak{T}(w,\mathbb{P})]$  in $[s,T]$.
\end{definition}

     The following theorem holds independent significance within the realm of control theory.

\begin{theorem}\label{yu-theorem-6-22-1}
    Let  $0<s<T$ be fixed. Then the following statements are equivalent:
\begin{enumerate}
  \item [$(i)$] The system $[A,B,C,D,\mathfrak{T}(w,\mathbb{P})]$ is  $\delta$-null controllable in $[s,T]$ with cost for some $\delta\in(0,1)$.

  \item [$(ii)$] There exists  $\delta\in(0,1)$ and $c>0$  such that, for any $y_1\in L^2_{\mathcal{F}^w_T}(\Omega,\mathbb{P};\mathbb{R}^n)$,
\begin{equation}\label{yu-7-30-1}
     E_{\mathbb{P}}|y(s;y_1)|^2_{\mathbb{R}^n}
    \leq c E_{\mathbb{P}}\int_s^T\Big|B^\top y(t;y_1)+\sum_{i=1}^dD_i^\top Y_i(t;y_1)\Big|^2_{\mathbb{R}^m}dt
    +\delta E_{\mathbb{P}}|y_1|_{\mathbb{R}^n}^2,
\end{equation}
where $(y(\cdot;y_1), Y(\cdot;y_1):=(Y_i(\cdot;y_1))_{i=1}^d)$  is the solution to the BSDE in \eqref{yu-7-26-10} with terminal value  $y_1\in L^2_{\mathcal{F}^w_T}(\Omega,\mathbb{P};\mathbb{R}^n)$.
\end{enumerate}
    In particular, if $(i)$ holds with cost $c(\delta,s,T)$, then \eqref{yu-7-30-1}   is satisfied by  substituting $(c,\delta)$ with  $(c(\delta,s,T)(1+2(1-\delta)^{-1}),\frac{1+\delta}{2})$. Conversely,
    if
    \eqref{yu-7-30-1} holds for any $y_1\in L^2_{\mathcal{F}^w_T}(\Omega,\mathbb{P};\mathbb{R}^n)$, then $[A,B,C,D,\mathfrak{T}(w,\mathbb{P})]$ is $\delta$-null controllable on $[s,T]$ with cost $\sqrt{c\delta^{-1}c_0(T-s)}$. Here,   $c_0(T-s)$ is the constant  such that
\begin{equation}\label{yu-b-8-8-1}
    E_{\mathbb{P}}|x(T;0,\xi,s)|_{\mathbb{R}^n}^2\leq c_0(T-s)E_{\mathbb{P}}|\xi|_{\mathbb{R}^n}^2\;\; \mbox{for any}\;\;\xi \in L^2_{\mathcal{F}^w_s}
    (\Omega,\mathbb{P};\mathbb{R}^n).
\end{equation}

\end{theorem}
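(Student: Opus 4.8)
The plan is to prove $(i)\Leftrightarrow(ii)$ by the classical duality argument between the forward control system and its adjoint BSDE, with It\^{o}'s formula as the bridge. As a preliminary step, valid for both directions, I would establish the duality identity: applying It\^{o}'s formula to $\langle x(t;u,x_s,s),\,y(t;y_1)\rangle_{\mathbb{R}^n}$ on $[s,T]$ and using the forward equation \eqref{yu-7-3-1} together with the adjoint BSDE in \eqref{yu-7-26-10}, the drift contributions coming from $A,C_i$ cancel exactly against those coming from $A^\top,C_i^\top$ (including the cross-variation term), leaving
\begin{equation*}
    E_{\mathbb{P}}\langle x(T;u,x_s,s),\,y_1\rangle_{\mathbb{R}^n}
    =E_{\mathbb{P}}\langle x_s,\,y(s;y_1)\rangle_{\mathbb{R}^n}
    +E_{\mathbb{P}}\int_s^T\Big\langle u(t),\,B^\top y(t;y_1)+\sum_{i=1}^dD_i^\top Y_i(t;y_1)\Big\rangle_{\mathbb{R}^m}dt.
\end{equation*}
Writing $\mathcal{R}\colon u\mapsto x(T;u,0,s)$ for the (bounded) control-to-terminal-state map, this identity says $\mathcal{R}^*y_1=B^\top y(\cdot;y_1)+\sum_{i=1}^dD_i^\top Y_i(\cdot;y_1)$ (the observation output), while the free-flow map $x_s\mapsto x(T;0,x_s,s)$ has adjoint $y_1\mapsto y(s;y_1)$.

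For $(i)\Rightarrow(ii)$: given $y_1\in L^2_{\mathcal{F}^w_T}(\Omega,\mathbb{P};\mathbb{R}^n)$, I would apply the $\delta$-null controllability hypothesis to the \emph{random} initial state $x_s:=y(s;y_1)$, which is $\mathcal{F}^w_s$-measurable and hence admissible in Definition \ref{yu-def-6-15-1}, obtaining a control $u$ with $\|u\|\le c(\delta,s,T)\|y(s;y_1)\|$ and $E_{\mathbb{P}}|x(T;u,y(s;y_1),s)|^2\le\delta\|y(s;y_1)\|^2$. Inserting this $u$ and $x_s=y(s;y_1)$ into the duality identity makes the left-hand side and the first right-hand term collapse to $E_{\mathbb{P}}|y(s;y_1)|^2$; estimating the remaining two terms by Cauchy--Schwarz and dividing by $\|y(s;y_1)\|$ yields $\|y(s;y_1)\|\le\sqrt{\delta}\,\|y_1\|+c(\delta,s,T)\,\|\mathcal{R}^*y_1\|$, and squaring followed by Young's inequality, with the weight tuned so that the $E_{\mathbb{P}}|y_1|^2$-coefficient becomes $\tfrac{1+\delta}{2}<1$, produces \eqref{yu-7-30-1} with the enlarged observability constant recorded in the theorem statement.

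For $(ii)\Rightarrow(i)$: fix $x_s\in L^2_{\mathcal{F}^w_s}(\Omega,\mathbb{P};\mathbb{R}^n)$ and set $\varepsilon:=\delta/c$. I would minimize over $u\in L^2_{\mathfrak{T}(w,\mathbb{P})}(s,T;\mathbb{R}^m)$ the strictly convex, coercive functional $\mathcal{J}_\varepsilon(u):=\tfrac12\|u\|^2+\tfrac1{2\varepsilon}E_{\mathbb{P}}|x(T;u,x_s,s)|^2$; its unique minimizer $u_\varepsilon$ obeys the Euler--Lagrange relation $u_\varepsilon=-\tfrac1\varepsilon\mathcal{R}^*\phi_\varepsilon=-\tfrac1\varepsilon\big(B^\top\pi+\sum_{i=1}^dD_i^\top\Pi_i\big)$, where $\phi_\varepsilon:=x(T;u_\varepsilon,x_s,s)$ and $(\pi,\Pi)$ solves the adjoint BSDE with terminal value $\phi_\varepsilon$. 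Feeding $u=u_\varepsilon$ and $y_1=\phi_\varepsilon$ into the duality identity gives $E_{\mathbb{P}}|\phi_\varepsilon|^2+\varepsilon\|u_\varepsilon\|^2=E_{\mathbb{P}}\langle x_s,\pi(s)\rangle$; combining this with the observability inequality \eqref{yu-7-30-1} applied to $y_1=\phi_\varepsilon$ (for which $\mathcal{R}^*\phi_\varepsilon=-\varepsilon u_\varepsilon$) and with Cauchy--Schwarz, the choice $\varepsilon=\delta/c$ makes the cross-term vanish and forces $E_{\mathbb{P}}|\phi_\varepsilon|^2\le\delta\|x_s\|^2$. Finally $\tfrac12\|u_\varepsilon\|^2\le\mathcal{J}_\varepsilon(u_\varepsilon)\le\mathcal{J}_\varepsilon(0)\le\tfrac{c_0(T-s)}{2\varepsilon}\|x_s\|^2$ by \eqref{yu-b-8-8-1}, so $\|u_\varepsilon\|^2\le c\delta^{-1}c_0(T-s)\|x_s\|^2$; hence $u_\varepsilon$ witnesses $\delta$-null controllability on $[s,T]$ with cost $\sqrt{c\delta^{-1}c_0(T-s)}$.

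I expect the main obstacle to be technical rather than conceptual: rigorously justifying the It\^{o}-duality identity for the coupled FBSDE (the cross-variation cancellation, and the integrability making all expectations finite, since $u$, $x$, $y$ and $Y$ lie in different $L^2$-type spaces), and then carrying out the two Young-type estimates so as to match the stated constants. Secondary points requiring care are the admissibility of $y(s;y_1)$ as an $\mathcal{F}^w_s$-measurable datum in Definition \ref{yu-def-6-15-1}, and the existence, uniqueness and Euler--Lagrange characterization of the minimizer $u_\varepsilon$ in the infinite-dimensional space $L^2_{\mathfrak{T}(w,\mathbb{P})}(s,T;\mathbb{R}^m)$.
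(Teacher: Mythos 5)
Your proposal is correct, and its skeleton is the same as the paper's: both directions hinge on the It\^{o}-duality identity \eqref{yu-6-22-11}. For $(i)\Rightarrow(ii)$ your choice $x_s:=y(s;y_1)$ (admissible, since Definition \ref{yu-def-6-15-1} allows arbitrary $\mathcal{F}^w_s$-measurable data) is just the specialization that the paper reaches by keeping $x_s$ arbitrary and dualizing, so that direction is essentially identical. The visible difference is in $(ii)\Rightarrow(i)$: the paper introduces the Gramian-type operator $\mathcal{G}^\delta_{[s,T]}$ of \eqref{yu-6-23-2}, inverts it by Lax--Milgram, and takes $u_f:=-c\big(B^\top y(\cdot;f)+\sum_{i=1}^dD_i^\top Y_i(\cdot;f)\big)$ with $f=(\mathcal{G}^\delta_{[s,T]})^{-1}x(T;0,x_s,s)$, whereas you minimize the penalized functional $\mathcal{J}_\varepsilon$ with $\varepsilon=\delta/c$ and use its Euler--Lagrange relation. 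These are two packagings of the same HUM-type construction: writing $\mathcal{R}$ for your control-to-terminal-state map, one has $\mathcal{G}^\delta_{[s,T]}=c\,\mathcal{R}\mathcal{R}^*+\delta I$ and your minimizer satisfies $u_\varepsilon=-\mathcal{R}^*\big(\varepsilon I+\mathcal{R}\mathcal{R}^*\big)^{-1}x(T;0,x_s,s)$, which for $\varepsilon=\delta/c$ is literally the paper's $u_f$; moreover your two estimates (the comparison $\mathcal{J}_\varepsilon(u_\varepsilon)\le\mathcal{J}_\varepsilon(0)$ with \eqref{yu-b-8-8-1} for the cost, and the identity $E_{\mathbb{P}}|\phi_\varepsilon|^2+\varepsilon\|u_\varepsilon\|^2=E_{\mathbb{P}}\langle x_s,\pi(s)\rangle_{\mathbb{R}^n}$ combined with \eqref{yu-7-30-1} at $y_1=\phi_\varepsilon$ for the terminal bound) reproduce exactly the stated constants $\sqrt{c\delta^{-1}c_0(T-s)}$ and $\delta$, arguably with a slightly cleaner derivation of the cost bound than the paper's chain through $(\mathcal{G}^\delta_{[s,T]})^{-1}$ and the auxiliary estimate on $E_{\mathbb{P}}|f|^2_{\mathbb{R}^n}$. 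The only caveat is bookkeeping: taking Definition \ref{yu-def-6-15-1} literally ($\|u\|\le c(\delta,s,T)\|x_s\|$), your Young-inequality step gives the output coefficient $c(\delta,s,T)^2\big(1+2\delta(1-\delta)^{-1}\big)$ rather than $c(\delta,s,T)\big(1+2(1-\delta)^{-1}\big)$; the paper's own proof has the same $c$-versus-$\sqrt{c}$ ambiguity, so this is inherited from the statement rather than a gap in your argument.
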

 \begin{remark}\label{yu-remark-4-2}
 \begin{enumerate}
   \item [$(i)$] By utilizing the classical variation of constants formula for SDEs and the Gronwall inequality, it can be readily verified that there exists a function $c_0:\mathbb{R}^+\to\mathbb{R}^+$ such  that, for any $t_1,t_2\in [0,+\infty)$ with $ t_2>t_1$,
     it holds that
        $E_{\mathbb{P}}|x(t_2;0,\xi,t_1)|_{\mathbb{R}^n}^2\leq c_0(t_2-t_1)E_{\mathbb{P}}|\xi|_{\mathbb{R}^n}^2$ for any
        $\xi \in L^2_{\mathcal{F}^w_{t_1}}
    (\Omega,\mathbb{P};\mathbb{R}^n)$.
   \item [$(ii)$] In Theorem \ref{yu-theorem-6-22-1}, we have presented an explicitly relationship between the cost $c(\delta,s,T)$ of $\delta$-null controllability of $[A,B,C,D,\mathfrak{T}(w,\mathbb{P})]$ and observability constant $c$ in  \eqref{yu-7-30-1}. Whether they are equal or not is still unknown to us.
 \end{enumerate}

\end{remark}
\begin{proof}[\emph{{\bf Proof of  Theorem \ref{yu-theorem-6-22-1}}}]
     We first prove the implication $(i)\Rightarrow(ii)$. Suppose that $\delta\in (0,1)$  such  that the system $[A,B,C,D,\mathfrak{T}(w,\mathbb{P})]$ is  $\delta$-null controllable in $[s,T]$ with cost.
      Consequently, there exists a constant  $c(\delta,s,T)>0$ such that, for any $x_s$, there
       exists  a control $u\in  L^2_{\mathfrak{T}(w,\mathbb{P})}(s,T;\mathbb{R}^m)$   satisfying
       the inequalities in \eqref{yu-6-15-5}.

     Fix $x_s$ arbitrarily  and choose  $u\in  L^2_{\mathfrak{T}(w,\mathbb{P})}(s,T;\mathbb{R}^m)$ be such that  \eqref{yu-6-15-5} holds. For any  $y_1$, we apply    It\^{o}'s formula to
      the inner product $\langle x(\cdot;u,x_s,s),y(\cdot;y_1)\rangle_{\mathbb{R}^n}$
      (over the interval  $[s,T]$),  yielding
\begin{eqnarray*}\label{yu-6-22-10}
    &\;&d\langle x(t;u,x_s,s),y(t;y_1)\rangle_{\mathbb{R}^n}
    =\Big\langle u(t),B^\top y(t;y_1)+\sum_{i=1}^dD_i^\top Y_i(t;y_1)\Big\rangle_{\mathbb{R}^m}dt\nonumber\\
    &\;&+\sum_{i=1}^d[\langle C_ix(t;u,x_s,s)+ D_iu(t),y(t;y_1)\rangle_{\mathbb{R}^n}
    +\langle x(t;u,x_s,s),Y_i(t;y_1)\rangle_{\mathbb{R}^n}]dw^i(t).
\end{eqnarray*}
     As a result, we obtain the following equation
\begin{equation}\label{yu-6-22-11}
    E_{\mathbb{P}}\langle x(T;u,x_s,s),y_1\rangle_{\mathbb{R}^n}-E_{\mathbb{P}}\langle x_s,y(s;y_1)\rangle_{\mathbb{R}^n}
    =E_{\mathbb{P}}\int_s^T\Big\langle u(t),B^\top y(t;y_1)+\sum_{i=1}^dD_i^\top Y_i(t;y_1)\Big\rangle_{\mathbb{R}^m}dt.
\end{equation}
     Combining this equation with \eqref{yu-6-15-5} and applying the H\"{o}lder inequality, we derive
\begin{eqnarray*}\label{yu-6-22-12}
    &\;&|E_{\mathbb{P}}\langle x_s,y(s;y_1)\rangle_{\mathbb{R}^n}|\nonumber\\
    &\leq&E_{\mathbb{P}}\int_s^T\Big(|u(t)|_{\mathbb{R}^m}\Big|B^\top y(t;y_1)+\sum_{i=1}^dD_i^\top Y_i(t;y_1)\Big|_{\mathbb{R}^m}\Big)dt +E_{\mathbb{P}}(|y_1|_{\mathbb{R}^n}|x(T;u,x_s,s)|_{\mathbb{R}^n})
    \nonumber\\
    &\leq&\sqrt{c(\delta, s, T)}\big(E_{\mathbb{P}}|x_s|^2_{\mathbb{R}^n}\big)^{\frac{1}{2}}
    \biggl(E_{\mathbb{P}}\int_s^T\Big|B^\top y(t;y_1)+\sum_{i=1}^dD_i^\top Y_i(t;y_1)\biggl|^2_{\mathbb{R}^m}dt\Big)^{\frac{1}{2}}
    +\sqrt{\delta}(E_{\mathbb{P}}|y_1|_{\mathbb{R}^n}^2)^{\frac{1}{2}}
   \Big(E_{\mathbb{P}}|x_s|^2_{\mathbb{R}^n}\Big)^{\frac{1}{2}}.
\end{eqnarray*}
     Therefore, due to the arbitrariness of  $x_s$, we can conclude
\begin{equation*}\label{yu-6-23-1}
     E_{\mathbb{P}}|y(s;y_1)|^2_{\mathbb{R}^n}
    \leq c(\delta,s,T)(1+2(1-\delta)^{-1})E_{\mathbb{P}}\int_s^T\Big|B^\top y(t;y_1)+\sum_{i=1}^dD_i^\top Y_i(t;y_1)\Big|^2_{\mathbb{R}^m}dt
    +\frac{1+\delta}{2} E_{\mathbb{P}}|y_1|_{\mathbb{R}^n}^2.
\end{equation*}
     Given the arbitrariness of $y_1$ and the condition $\delta\in(0,1)$,   we can deduce that \eqref{yu-7-30-1} holds with the substitution of  $(c,\delta)$ by $(c(\delta,s,T)(1+2(1-\delta)^{-1}),\frac{1+\delta}{2})$.  Consequently, $(ii)$ is satisfied.

      We will now prove the implication  $(ii)\Rightarrow (i)$. Let $\delta\in (0,1)$  be given, and suppose that there exists a constant  $c>0$ such that \eqref{yu-7-30-1} holds for any $y_1\in L^2_{\mathcal{F}^w_T}(\Omega,\mathbb{P};\mathbb{R}^n)$.
        We define the mapping $\mathcal{G}_{[s,T]}^\delta: L^2_{\mathcal{F}^w_T}(\Omega,\mathbb{P};\mathbb{R}^n)
        \to  L^2_{\mathcal{F}^w_T}(\Omega,\mathbb{P};\mathbb{R}^n)$ as follows:
\begin{eqnarray}\label{yu-6-23-2}
     \langle \mathcal{G}_{[s,T]}^\delta \xi,\eta\rangle_{L^2_{\mathcal{F}^w_T}(\Omega,\mathbb{P};\mathbb{R}^n)}
     :=cE_{\mathbb{P}}\int_s^T\Big\langle B^\top y(t;\xi)+\sum_{i=1}^dD_i^\top Y_i(t;\xi), B^\top y(t;\eta)+\sum_{i=1}^dD_i^\top Y_i(t;\eta)\Big\rangle_{\mathbb{R}^m}dt\nonumber\\
    +\delta E_{\mathbb{P}}\langle\xi, \eta\rangle_{\mathbb{R}^n}\;\;\mbox{for}\;\;\xi,\eta\in L^2_{\mathcal{F}^w_T}(\Omega,\mathbb{P};\mathbb{R}^n).
\end{eqnarray}
    It is evident  that $\mathcal{G}_{[s,T]}^\delta$ is self-adjoint and bounded on $L^2_{\mathcal{F}^w_T}(\Omega,\mathbb{P};\mathbb{R}^n)$, satisfying the following  inequality
    for any $\xi\in L^2_{\mathcal{F}_T}(\Omega, \mathbb{P};\mathbb{R}^n)$:
\begin{equation}\label{yu-6-23-3}
    \langle \mathcal{G}_\delta\xi,\xi\rangle_{L^2_{\mathcal{F}^w_T}
    (\Omega,\mathbb{P};\mathbb{R}^n)}
    \geq \delta E_{\mathbb{P}}|\xi|_{\mathbb{R}^n}^2\;\;\mbox{and}\;\;
    \langle \mathcal{G}_\delta\xi,\xi\rangle_{L^2_{\mathcal{F}_T}(\Omega,
    \mathbb{P};\mathbb{R}^n)}
    \geq E_{\mathbb{P}}|y(s;\xi)|^2_{\mathbb{R}^n}.
     \end{equation}
     Consequently, by the Lax-Milgram theorem, we get that $\mathcal{G}_{[s,T]}^\delta$ is invertible,  and
\begin{equation}\label{yu-6-23-3-b}
    \Big\|\big(\mathcal{G}_{[s,T]}^\delta\big)^{-1}
    \Big\|_{\mathcal{L}(L^2_{\mathcal{F}^w_T}
    (\Omega,\mathbb{P};\mathbb{R}^n))}
    \leq \delta^{-1}.
\end{equation}
    Given  an arbitrarily   $x_s\in \mathbb{R}^n$,  we define
\begin{equation}\label{yu-6-23-4}
    f:=\big(\mathcal{G}_{[s,T]}^\delta\big)^{-1}x(T;0,x_s,s) (\in L^2_{\mathcal{F}^w_T}(\Omega,\mathbb{P};\mathbb{R}^n))
\end{equation}
    and
\begin{equation}\label{yu-6-23-5}
    u_f(t):=-c\Big(B^\top y(t;f)+\sum_{i=1}^dD_i^\top Y_i(t;f)\Big),\;\;t\in[s,T].
\end{equation}
    It is evident  that  $u_f\in L^2_{\mathfrak{T}(w,\mathbb{P})}(s,T;\mathbb{R}^m)$,  and
\begin{eqnarray}\label{yu-6-23-6}
    E_{\mathbb{P}}\int_s^T|u_f(t)|^2_{\mathbb{R}^m}dt
    &=&c^2 E_{\mathbb{P}}\int_s^T\Big|B^\top y(t;f)+\sum_{i=1}^dD_i^\top Y_i(t;f)\Big|_{\mathbb{R}^m}^2dt=c\big(\langle \mathcal{G}_{[s,T]}^\delta f,f\rangle_{L^2_{\mathcal{F}^w_T}(\Omega,\mathbb{P};\mathbb{R}^n)} -\delta E_{\mathbb{P}}|f|^2_{\mathbb{R}^n}\big)\nonumber\\
    &\leq& c\big\langle x(T;0,x_s,s), (\mathcal{G}^\delta_{[s,T]})^{-1}x(T;0,x_s,s)\big\rangle_{L^2_{\mathcal{F}^w_T}
    (\Omega,\mathbb{P};\mathbb{R}^n)}.
\end{eqnarray}
     Then, combining \eqref{yu-6-23-3-b}, \eqref{yu-6-23-6} and the definition of $c_0$, we derive
\begin{equation}\label{yu-6-23-7}
    \|u_f\|_{L^2_{\mathfrak{T}(w,\mathbb{P})}(s,T;\mathbb{R}^m)}\leq \sqrt{c\delta^{-1}c_0(T-s)}\|x_s\|_{L^2_{\mathcal{F}^w_s}
    (\Omega,\mathbb{P};\mathbb{R}^n)}.
\end{equation}

    Next, we aim to prove that
\begin{equation}\label{yu-6-23-8}
E_{\mathbb{P}}|x(T;u_f,x_s,s)|^2_{\mathbb{R}^n}\leq \delta\|x_s\|^2_{L^2_{\mathcal{F}^w_s}
    (\Omega,\mathbb{P};\mathbb{R}^n)}.
\end{equation}
    Before doing so, we first establish
\begin{equation}\label{yu-6-23-9}
    E_{\mathbb{P}}|f|_{\mathbb{R}^n}^2\leq \delta^{-1}\|x_s\|^2_{L^2_{\mathcal{F}^w_s}(\Omega,\mathbb{P};\mathbb{R}^n)}.
\end{equation}
    Indeed, by applying It\^{o}'s formula to $\big\langle x(\cdot;0,x_s,s),y(\cdot;
    (\mathcal{G}_{[s,T]}^\delta)^{-1}f)\big\rangle_{\mathbb{R}^n}$  and utilizing the second inequality in \eqref{yu-6-23-3}, we obtain
\begin{eqnarray*}\label{yu-6-23-10}
    E_{\mathbb{P}}|f|_{\mathbb{R}^n}^2&=&E_{\mathbb{P}}\big\langle x(T;0,x_s,s),(\mathcal{G}_{[s,T]}^\delta)^{-1}f\big\rangle_{\mathbb{R}^n}
    =E_{\mathbb{P}}\big\langle x_s,y(s;(\mathcal{G}_{[s,T]}^\delta)^{-1}f)\big\rangle_{\mathbb{R}^n}\nonumber\\
    &\leq& E_{\mathbb{P}}\big(|x_s|_{\mathbb{R}^n}|y(s,(\mathcal{G}_{[s,T]}^\delta)^{-1} f)|_{\mathbb{R}^n}\big)
    \leq  \big(E_{\mathbb{P}}|x_s|^2_{\mathbb{R}^n}\big)^{\frac{1}{2}} \Big(\big\langle f,(\mathcal{G}_{[s,T]}^\delta)^{-1} f\big\rangle_{L^2_{\mathcal{F}_T}(\Omega;\mathbb{R}^n)}\Big)^{\frac{1}{2}}
    \nonumber\\
    &\leq& \big(E_{\mathbb{P}}|x_s|^2_{\mathbb{R}^n}\big)^{\frac{1}{2}}
    \|(\mathcal{G}_{[s,T]}^\delta)^{-1}\|^{\frac{1}{2}}
    _{\mathcal{L}(L^2_{\mathcal{F}^w_T}(\Omega;\mathbb{R}^n))}
(E_{\mathbb{P}}|f|_{\mathbb{R}^n}^2)^{\frac{1}{2}}.
\end{eqnarray*}
       This, combined with \eqref{yu-6-23-3-b}, confirms the validity of \eqref{yu-6-23-9}.
Now, let $\xi\in L^2_{\mathcal{F}^w_T}(\Omega;\mathbb{R}^n)$  be arbitrary. Similar to the proof of \eqref{yu-6-22-11}, we derive
\begin{equation*}\label{yu-6-23-11}
    E_{\mathbb{P}}\langle x(T;u_f,x_s,s),\xi\rangle_{\mathbb{R}^n}-E_{\mathbb{P}}\langle x_s,y(s;\xi)\rangle_{\mathbb{R}^n}
    =E_{\mathbb{P}}\int_s^T\Big\langle u_f(t),B^\top y(t;\xi)+\sum_{i=1}^dD_i^\top Y_i(t;\xi)\Big\rangle_{\mathbb{R}^m}dt.
\end{equation*}
     This, along with \eqref{yu-6-23-2}, \eqref{yu-6-23-4}, and \eqref{yu-6-23-5}, leads to
\begin{eqnarray}\label{yu-6-23-12}
    E_{\mathbb{P}}\langle x(T;u_f,x_s,s),\xi\rangle_{\mathbb{R}^n}&=&E_{\mathbb{P}}\langle x_s,y(s;\xi)\rangle_{\mathbb{R}^n}
    -\langle \mathcal{G}_{[s,T]}^\delta f,\xi\rangle_{L^2_{\mathcal{F}_T}(\Omega;\mathbb{R}^n)}
    +\delta E_{\mathbb{P}}\langle f,\xi\rangle_{\mathbb{R}^n}\nonumber\\
    &=&E_{\mathbb{P}}\langle x_s,y(s;\xi)\rangle_{\mathbb{R}^n}
    -E_{\mathbb{P}}\langle x(T;0,x_s,s),\xi\rangle_{\mathbb{R}^n}
    +\delta E_{\mathbb{P}}\langle f,\xi\rangle_{\mathbb{R}^n}.
\end{eqnarray}
    Note that, using  It\^{o}'s formula, it can be shown that  $E_{\mathbb{P}}\langle x_s,y(s;\xi)\rangle_{\mathbb{R}^n}
    -E\langle x(T;0,x_s,s),\xi\rangle_{\mathbb{R}^n}=0$.  Thus, combining \eqref{yu-6-23-12} and \eqref{yu-6-23-9}, we obtain
\begin{equation*}\label{yu-6-23-13}
    E_{\mathbb{P}}\langle x(T;u_f,x_s,s),\xi\rangle_{\mathbb{R}^n}=\delta E_{\mathbb{P}}\langle f,\xi\rangle_{\mathbb{R}^n}\leq \delta^{\frac{1}{2}}(E_{\mathbb{P}}|\xi|_{\mathbb{R}^n}^2)^{\frac{1}{2}}
    (E_{\mathbb{P}}|x_s|^2_{\mathbb{R}^n})^{\frac{1}{2}}.
\end{equation*}
     Given the arbitrariness of $\xi$,  this implies that
    \eqref{yu-6-23-8} holds.  By \eqref{yu-6-23-7} and \eqref{yu-6-23-8}, we can conclude that the system $[A,B,C,D,\mathfrak{T}(w,\mathbb{P})]$ is  $\delta$-null controllable on $[0,T]$ with cost $\sqrt{c\delta^{-1}c_0(T-s)}$.  In other words, the claim $(i)$ holds. The proof of Theorem \ref{yu-theorem-6-22-1} is now complete.
\end{proof}

\begin{remark} The operator $\mathcal{G}_{[s,T]}^\delta$ defined by \eqref{yu-6-23-2}   can be interpreted as the Gramian operator of a stochastic observation system (cf. \cite{Ma-Wang-Yu} for the deterministic observation system case).
\end{remark}

    We are now ready to present the proof of Theorem \ref{yu-theorem-7-27-2}.
\vskip 5pt
\begin{proof}[\emph{{\bf Proof of Theorem \ref{yu-theorem-7-27-2}}}]
     The proof will be divided into several steps.

    {\bf Step 1:  $(i)\Rightarrow (ii)$.} Suppose that, for any filtered probability space $\mathfrak{T}(w,\mathbb{P})$, the system $[A,B,C,D, \mathfrak{T}(w,\mathbb{P})]$ is feedback stabilizable.  Fix an arbitrary filtered probability space $\mathfrak{T}(w,\mathbb{P})$. By our assumption, there exists  $F\in \mathbb{R}^{m\times n}$ such that, for any  $x_0\in\mathbb{R}^n$, the solution $x_F(\cdot;x_0)$ of the closed-loop system \eqref{yu-7-29-1}
    	satisfies   \eqref{yu-7-29-2}, i.e., there is  $\alpha>0$ and
    $c(\alpha)>0$ such that for each $t\in\mathbb{R}^+$ and any $x_0\in\mathbb{R}^n$ such the following inequality holds:
\begin{equation}\label{yu-8-8-1}
     E_{\mathbb{P}}|x_F(t;x_0)|_{\mathbb{R}^n}^2\leq
   c(\alpha)e^{-\alpha t}|x_0|^2_{\mathbb{R}^n}.
\end{equation}
        Consequently, for every  $\delta\in(0,1)$, there exists $T(\delta)>0$ such that, for any $x_0\in\mathbb{R}^n$ and $t\geq T(\delta)$, the following inequality is satisfied:
\begin{equation}\label{yu-8-8-2}
     E_{\mathbb{P}}|x_F(t;x_0)|_{\mathbb{R}^n}^2\leq\delta
     |x_0|^2_{\mathbb{R}^n}.
\end{equation}
    Let $x_0\in\mathbb{R}^n$ and $\delta\in(0,1)$ be fixed arbitrarily. Denote $T:=T(\delta)$ and $u_F(\cdot):=Fx_F(\cdot;x_0)$. It is evident  that $x_F(\cdot;x_0)=x(\cdot;u_F, x_0)$.
     Furthermore, by \eqref{yu-8-8-1} and \eqref{yu-8-8-2}, we can deduce that
\begin{equation}\label{yu-8-8-4}
	E_{\mathbb{P}}|x(T;u_F,x_0)|_{\mathbb{R}^n}^2\leq\delta
	|x_0|^2_{\mathbb{R}^n}
\end{equation}
    and
\begin{eqnarray}\label{yu-8-8-3}
 \|u_F\|_{L^2_{\mathfrak{T}(w,\mathbb{P})}(0,T;\mathbb{R}^m)}=
 \Big(E_{\mathbb{P}}\int_0^T  |Fx(t;u_F,x_0)|^2_{\mathbb{R}^m}dt\Big)^{\frac{1}{2}}\leq c(\delta,T)|x_0|_{\mathbb{R}^n},
\end{eqnarray}
 where $c(\delta,T):=\alpha^{-1}c(\alpha)(1-e^{-\alpha T})|F|_{\mathbb{R}^{m\times n}}$.

Given the arbitrariness of $x_0$,  \eqref{yu-8-8-4} and \eqref{yu-8-8-3} imply that $[A,B,C,D,\mathfrak{T}(w,\mathbb{P})]$ is $\delta$-null controllable with cost in $[0,T]$.  This, combined with the arbitrariness of $\mathfrak{T}(w,\mathbb{P})$ and $\delta$, leads to the conclusion that, for any $\mathfrak{T}(w,\mathbb{P})$,   $[A,B,C,D,\mathfrak{T}(w,\mathbb{P})]$ is approximately null controllable with cost.  Hence, $(ii)$ holds.

    {\bf Step 2:  $(ii)\Rightarrow (iii)$.}  This implication is straightforward.

    {\bf Step 3:   $(iii)\Rightarrow (iv)$.}  This is evident by Theorem \ref{yu-theorem-6-22-1}.

    {\bf Step 4:  $(iv)\Rightarrow (i)$.} Suppose that the system $[A^\top,B^\top,C^\top,D^\top,\mathfrak{T}(w,\mathbb{P})]$ is weakly observable for a filtered probability space $\mathfrak{T}(w,\mathbb{P})$. This means that there exist  $T\in \mathbb{R}^+$, $\delta\in[0,1)$ and $c(\delta,T)\in \mathbb{R}^+$ such that the weak observability inequality \eqref{yu-6-22-4-001} holds on $\mathfrak{T}(w,\mathbb{P})$.  Therefore, according to Theorem \ref{yu-theorem-7-24-1}, the weak observability inequality \eqref{yu-6-22-4-001} holds on any filtered probability space $\mathfrak{T}(w,\mathbb{P})$ with the same $T$, $\delta$ and $c(\delta,T)$.

    Let $\mathfrak{T}(w,\mathbb{P})$ be fixed arbitrarily.
    For every  $k\in\mathbb{N}$, we define
    $\widehat{w}_{k}(\cdot)=w(kT+\cdot)-w(kT)$ in $[0,+\infty)$ and $\mathfrak{T}_k:=\mathfrak{T}(\widehat{w}_{k}, \mathbb{P}_k)$ with $\mathbb{P}_k:=\mathbb{P}(\cdot|\mathcal{F}^w_{kT})$ and $\{\mathcal{F}^{\widehat{w}_k}_t\}_{t\in[0,T]}$ to be the natural filtration on $\mathfrak{T}_k$. (Here, we note that $\widehat{w}_k$ is a standard Brownain motion on $(\Omega,\mathcal{F},\mathbb{P}_k)$.)  It is clear  that $\widehat{w}_0=w$ and $\mathfrak{T}_0=\mathfrak{T}(w,\mathbb{P})$.  By Theorem \ref{yu-theorem-7-24-1} and our assumption, we can assert that, for every  $k\in\mathbb{N}$,
\begin{equation*}\label{yu-8-8-10}
	E_{\mathbb{P}_k}|y^{\widehat{w}_{k}}(0,y_1)|^2_{\mathbb{R}^n}
	\leq c(\delta,T)E_{\mathbb{P}_k}\int_0^T\Big|B^\top y^{\widehat{w}_{k}}(t,y_1)+\sum_{i=1}^dD_i^\top Y^{\widehat{w}_{k}}_i(t;y_1)\Big|^2_{\mathbb{R}^m}dt
	+\delta E_{\mathbb{P}_k}|y_1|_{\mathbb{R}^n}^2
\end{equation*}
    for any $y_1\in L^2_{\mathcal{F}_T^{\widehat{w}_k}}(\Omega,\mathbb{P}_k;\mathbb{R}^n)$, where
    $(y^{\widehat{w}_{k}}(\cdot,y_1),Y^{\widehat{w}_k}(\cdot;y_1):=(Y_i^{\widehat{w}_k}
    (\cdot;y_1))_{i=1}^d)$ is the solution of the dual system in \eqref{yu-7-26-10} on $\mathfrak{T}_k$.
     Combining this with Proposition \ref{yu-proposition-6-27-1}, we obtain that, for any $k\in\mathbb{N}$,
\begin{eqnarray}\label{yu-bb-8-8-1-2}
    E_{\mathbb{P}_k}|y^{k+1}(kT;y_1)|_{\mathbb{R}^n}^2
    &\leq& c(\delta,T)E_{\mathbb{P}_k}\int_0^T\Big|B^\top y^{k+1}(kT+t,y_1)+\sum_{i=1}^dD_i^\top Y^{k+1}_i(kT+t;y_1)\Big|^2_{\mathbb{R}^m}dt+\delta E_{\mathbb{P}_k}|y_1|_{\mathbb{R}^n}^2\nonumber\\
    &=& c(\delta,T)E_{\mathbb{P}_k}\int_{kT}^{(k+1)T}\Big|B^\top y^{k+1}(t,y_1)+\sum_{i=1}^dD_i^\top Y^{k+1}_i(t;y_1)\Big|^2_{\mathbb{R}^m}dt\nonumber\\
    &\;&+\delta E_{\mathbb{P}_k}|y_1|_{\mathbb{R}^n}^2
   \;\;\mathbb{P}-a.s.,\;\;\mbox{on}\;\;(\Omega,\mathcal{F},\mathbb{P})
    \;\;\mbox{for any}\;\;y_1\in L^2_{\mathcal{F}^w_{(k+1)T}}(\Omega,\mathbb{P};\mathbb{R}^n),
\end{eqnarray}
    where $(y^k(\cdot;y_1),Y^k(\cdot;y_1):=(Y_i^k
    (\cdot;y_1))_{i=1}^d))$ with $y_1\in L^2_{\mathcal{F}^w_{kT}}
    (\Omega,\mathbb{P};\mathbb{R}^n)$ is the solution of the equation \eqref{yu-6-22-1}. Taking
    the expectation $E_{\mathbb{P}}$ on  both sides of
    \eqref{yu-bb-8-8-1-2}, we obtain
\begin{eqnarray*}\label{yu-bb-8-8-1}
     E_{\mathbb{P}}|y^{k+1}(kT;y_1)|_{\mathbb{R}^n}^2
     &\leq&  c(\delta,T)E_{\mathbb{P}}\int_{kT}^{(k+1)T}\Big|B^\top y^{k+1}(t,y_1)+\sum_{i=1}^dD_i^\top Y^{k+1}_i(t;y_1)\Big|^2_{\mathbb{R}^m}dt\nonumber\\
    &\;&+\delta E_{\mathbb{P}}|y_1|_{\mathbb{R}^n}^2
    \;\;\mbox{for any}\;\;y_1\in L^2_{\mathcal{F}^w_{(k+1)T}}
    (\Omega,\mathbb{P};\mathbb{R}^n).
\end{eqnarray*}
     From Theorem \ref{yu-theorem-6-22-1}, we have the following conclusion
     \textbf{(CL)}: \emph{for any  $k\in\mathbb{N}$ and   $x_k\in
    L^2_{\mathcal{F}_{kT}^w}(\Omega,\mathbb{P};\mathbb{R}^n)$, there exists $u_k\in L^2_{\mathfrak{T}(w,\mathbb{P})}(kT,(k+1)T;\mathbb{R}^m)$ such that
    \begin{equation*}
		\|u_k\|^2_{L^2_{\mathfrak{T}(w,\mathbb{P})}(kT,(k+1)T;\mathbb{R}^m)}\leq c_{\delta,T}E_{\mathbb{P}}|x_k|^2_{\mathbb{R}^n}\;\;\mbox{and}\;\;
    			E_{\mathbb{P}}|x((k+1)T;u_k,x_k, kT)|^2_{\mathbb{R}^n}
\leq \delta E_{\mathbb{P}}|x_k|^2_{\mathbb{R}^n},
    	\end{equation*}
     where $c_{\delta,T}:=c(\delta,T)\delta^{-1}c_0(T)$. (Here,  $c_0(\cdot)$  is given such that \eqref{yu-b-8-8-1} holds (see Remark \ref{yu-remark-4-2}) and $x(\cdot;u,x_k,kT)$ is the solution of the equation \eqref{yu-7-3-1}
    with   $s$, $T$ and $x_s$ replaced  by $kT$, $(k+1)T$ and $x_k$ respectively).}

    Let $x_0\in\mathbb{R}^n$ be fixed arbitrarily.
    In the interval $[0,T]$, by \textbf{(CL)}, there exists  $u_0\in L^2_{\mathfrak{T}(w,\mathbb{P})}(0,T;\mathbb{R}^m))$ such that
\begin{equation}\label{yu-8-8-11}
    			\|u_0\|^2_{L^2_{\mathfrak{T}_0}(0,T;\mathbb{R}^m)}\leq c_{\delta,T}|x_0|^2_{\mathbb{R}^n}\;\;\mbox{and}\;\;
    		E_{\mathbb{P}}|x(T;u_0,x_0)|^2_{\mathbb{R}^n}
\leq \delta|x_0|^2_{\mathbb{R}^n}.
    	\end{equation}
    Let $x_1:=x(T;u_0,x_0)$. In the interval $[T,2T]$, by \textbf{(CL)} again, there exists
    $u_1\in L^2_{\mathfrak{T}(w,\mathbb{P})}(T,2T;\mathbb{R}^m)$ such that
\begin{equation*}
    		\begin{cases}
		\|u_1\|^2_{L^2_{\mathfrak{T}(w,\mathbb{P})}(T,2T;\mathbb{R}^m)}\leq c_{\delta,T}E_{\mathbb{P}}|x_1|^2_{\mathbb{R}^n}\leq c_{\delta,T}\delta |x_0|^2_{\mathbb{R}^n},\\
    			E_{\mathbb{P}}|x(2T;u_1,x_1, T)|^2_{\mathbb{R}^n}
\leq \delta E_{\mathbb{P}}|x_1|^2_{\mathbb{R}^n}\leq \delta^2 |x_0|^2_{\mathbb{R}^n}.
    		\end{cases}
    	\end{equation*}
      By applying an induction argument, we can construct two sequences $\{x_k\}_{k\in\mathbb{N}}$ and $\{u_k\}_{k\in\mathbb{N}}$ such that
    \begin{equation}\label{yu-8-8-19}
    		\begin{cases}
    x_{k+1}:=x((k+1)T; u_k, x_k, kT),\\
		\|u_{k}\|^2_{L^2_{\mathfrak{T}(w,\mathbb{P})}(kT,(k+1)T;\mathbb{R}^m)}\leq c_{\delta,T}\delta^k |x_0|^2_{\mathbb{R}^n},\\
    			E_{\mathbb{P}}|x((k+1)T;u_k,x_k, kT)|^2_{\mathbb{R}^n}
\leq \delta^{k+1} |x_0|^2_{\mathbb{R}^n},
    		\end{cases}
        \mbox{ for any  }k\in\mathbb{N},
    	\end{equation}
    where $u_0$ is given so that \eqref{yu-8-8-11} holds.  We define
$u^*(t)=u_k(t)$ if $t\in[kT,(k+1)T)$.
      From \eqref{yu-8-8-19}, it is clear that $u^*\in L^2_{\mathfrak{T}(w,\mathbb{P})}(\mathbb{R}^+;\mathbb{R}^m)$ with
\begin{equation}\label{yu-8-8-31}
    \|u^*\|^2_{L^2_{\mathfrak{T}(w,\mathbb{P})}(\mathbb{R}^+;\mathbb{R}^m)}
    \leq c_{\delta,T}|x_0|_{\mathbb{R}^n}^2\sum_{k=0}^{+\infty}\delta^k=
    c_{\delta,T}(1-\delta)^{-1}|x_0|_{\mathbb{R}^n}^2.
\end{equation}
     Furthermore, it is straightforward to verify that
$x(t;u^*,x_0)=x(t;u_k,x_k,kT)$ for any $t\in[kT,(k+1)T)$.
     Combining this with the variation of constants formula for SDEs, we obtain, for every  $k\in\mathbb{N}$,
    \begin{equation}\label{zhang-8-8-33}
    x(t;u^*,x_0)=e^{A(t-kT)}x_k+\int_{kT}^t e^{A(t-s)}Bu_k(s)ds+\int_{kT}^t e^{A(t-s)}(Cx(t;u^*,x_0)+Du_k(s))dw(s)
    \end{equation}
    for any $t\in[kT,(k+1)T)$. Therefore, by applying H\"{o}lder's inequality, Gronwall's inequality, and \eqref{zhang-8-8-33}, we can deduce that there exists a constant  $c_T:=c(T)>0$ such that
\begin{equation*}\label{yu-8-8-34}
    E_{\mathbb{P}}\int_{kT}^{(k+1)T}|x(t;u^*,x_0)|^2_{\mathbb{R}^n}dt
    \leq c_T(E_{\mathbb{P}}|x_k|^2_{\mathbb{R}^n}
    +\|u_k\|^2_{L^2_{\mathfrak{T}(w,\mathbb{P})}(kT,(k+1)T;\mathbb{R}^m)})
    \;\;\mbox{for each}\;\;k\in\mathbb{N}.
\end{equation*}
    This, combined with \eqref{yu-8-8-19}, implies that
\begin{equation}\label{yu-8-8-35}
    E_{\mathbb{P}}\int_{\mathbb{R}^+}|x(t;u^*,x_0)|^2_{\mathbb{R}^n}dt
    \leq  c_T(1+c_{\delta,T})|x_0|^2_{\mathbb{R}^n}\sum_{k=0}^{+\infty}\delta^k
    =c_T(1-\delta)^{-1}(1+c_{\delta,T})|x_0|^2_{\mathbb{R}^n}.
\end{equation}
     Therefore, due to the arbitrariness of  $x_0$, \eqref{yu-8-8-35} and \eqref{yu-8-8-31} allow us to conclude that the problem
       \textbf{(LQ)$_{\mathfrak{T}(w,\mathbb{P})}$} defined in Section \ref{yu-sec-2-2}  is solvable. By Lemma \ref{yu-lemma-6-14-1}, there exists $F^*\in\mathbb{R}^{m\times n}$ such that the solution of the closed-loop system \eqref{yu-7-29-1} with $F=F^*$ belongs to $L^2_{\mathfrak{T}(w,\mathbb{P})}(\mathbb{R}^+;\mathbb{R}^n)$.  Together with \cite[Proposition 3.6]{Huang-Li-Yong-2015} (or \cite[Lemma 2.3]{Sun-Yong-Zhang-2021}), this implies that the closed-loop system \eqref{yu-7-29-1} with  $F=F^*$ is asymptotically stable. Hence, the sytem $[A,B,C,D,\mathfrak{T}(w,\mathbb{P})]$ is feedback stabilizable. Since  $\mathfrak{T}(w,\mathbb{P})$  is arbitrary, the claim $(i)$ is true. The proof is complete.
\end{proof}

\section{Appendix: Proof of Lemma  \ref{yu-proposition-7-11-1}}\label{yu-sec-06}

     In this section, we provide the proof of Lemma \ref{yu-proposition-7-11-1} within the context of Section \ref{yu-sect-8-22}. Before proceeding, we recall some fundamental notations and concepts in Malliavin calculus.

    Let $s>0$, $p,q\in\mathbb{N}^+$ and $C^{k}_b(\mathbb{R}^{p};\mathbb{R}^q)$ ($k\in\mathbb{N}^+$) be the set of those functions in $C^{k}(\mathbb{R}^{p};\mathbb{R}^q)$ whose partial derivatives of order less than or equal to $k$ are bounded. We define
$$
    \mathcal{S}:=\{\xi=f(w(g_1),\cdots,w(g_p)): p\in\mathbb{N}^+,\;
    f\in C_b^{\infty}(\mathbb{R}^p;\mathbb{R}^q)
    (=\cap_{k\in\mathbb{N}^+}C_b^k(\mathbb{R}^p;\mathbb{R}^q)),\;(g_i)_{i=1}^p\in (L^2(s,T;\mathbb{R}^d))^p\},
$$
    where $w(g):=\int_s^Tg^\top(\sigma) dw(\sigma)$ for $g\in L^2(s,T;\mathbb{R}^d)$
    (this represents  the Wiener integral of $g^\top$ with respect to Brownian motion $w$). For  any
$\xi=(\xi_1,\xi_2,\cdots,\xi_q)^\top\in \mathcal{S}$,
    with $\xi_j=f_j(w(g_1),w(g_2),\cdots,w(g_p))$ for $1\leq j\leq q$ and some $p\in\mathbb{N}$, the  derivative $\mathbb{D}\xi:=(\mathbb{D}_t\xi)_{t\in[s,T]}(\in L^2(\Omega\times(s,T), \mathbb{P}\times \lambda;\mathbb{R}^{q\times d}))$ of $\xi$  is defined by $\mathbb{D}_t\xi=((\mathbb{D}_t\xi_1)^\top,(\mathbb{D}_t\xi_2)^\top,\cdots,
    (\mathbb{D}_t\xi_q)^\top)^\top$ with
\begin{equation}\label{yu-7-11-5}
    \mathbb{D}_t\xi_k:=\sum_{j=1}^p\partial_{\zeta_j}f_{k}(w(g_1),w(g_2),
    \cdots,w(g_p))g_j(t),\;\;t\in[s,T].
\end{equation}
       Here and in subsequent discussions, $\partial_{\zeta_j}f$  denotes the partial derivative of $f$ with respect to the $j$-th variable.
     It is evident that $\mathbb{D}_t\xi$ is a  valued function in $\mathbb{R}^{q\times d}$.
     Furthermore, we denote  the $i$-th column of $\mathbb{D}_t\xi$ as  $\mathbb{D}^i_t\xi$ (for $1\leq i\leq d$).  It is well-established that $\mathbb{D}$, as an unbounded operator from
       from $L^2_{\mathcal{F}^{w,s}_T}(\Omega,\mathbb{P};\mathbb{R}^q)$ to $L^2(\Omega\times(s,T), \mathbb{P}\times \lambda;\mathbb{R}^{q\times d}))$ is closable.
       Consequently, it can be extended to a closed operator from  $L^2_{\mathcal{F}^{w,s}_T}(\Omega,\mathbb{P};\mathbb{R}^q)$ to $L^2(\Omega\times(s,T), \mathbb{P}\times \lambda;\mathbb{R}^{q\times d})$.  This extension retains the same notation.
    On the set $\mathcal{S}$, we define a norm by
$\|\xi\|_{1,2}:=\left(E_{\mathbb{P}}|\xi|_{\mathbb{R}^q}^2+
    \sum_{i=1}^q E_{\mathbb{P}}\int_s^T|\mathbb{D}_t\xi_i|_{\mathbb{R}^d}^2dt
    \right)^{\frac{1}{2}}$
    ($\xi=(\xi_1,\xi_2,\cdots,\xi_q)^\top\in\mathcal{S}$).
    We define  $\mathcal{D}_{\mathfrak{T}^s(w,\mathbb{P}),q}^{1,2}(s,T)$ as the completion of $\mathcal{S}$
     with the norm $\|\cdot\|_{1,2}$ (i.e., $\mathcal{D}_{\mathfrak{T}^s(w,\mathbb{P}),q}^{1,2}(s,T)=
    \overline{\mathcal{S}}^{\|\cdot\|_{1,2}}$).
     Consequently, the domain of $\mathbb{D}$ in $L^2_{\mathcal{F}^{w,s}_T}(\Omega,\mathbb{P};\mathbb{R}^q)$ is $\mathcal{D}_{\mathfrak{T}^s(w,\mathbb{P}),q}^{1,2}(s,T)$.

    In this section, we define
\begin{eqnarray*}
    L^p_{\mathfrak{T}^s(w,\mathbb{P})}(\Omega; L^2(s,T;\mathbb{R}^q)):=\Big\{
    \xi:[s,T]\times \Omega\to\mathbb{R}^q:\xi
    \;\mbox{is}\;\{\mathcal{F}^{w,s}_t\}_{t\geq 0}\mbox{-progresively measurable}
     \\
     \mbox{and}
    \;E_{\mathbb{P}}\Big(\int_s^T|\xi(t)|_{\mathbb{R}^q}^2dt\Big)^{\frac{p}{2}}
    <+\infty
    \Big\},
\end{eqnarray*}
    and
\begin{eqnarray*}
    L^p_{\mathfrak{T}^s(w,\mathbb{P})}(\Omega;C([s,T];\mathbb{R}^q)):
    =\Big\{\xi:[s,T]\times\Omega
    \to\mathbb{R}^q:\xi
    \;\mbox{is}\;\{\mathcal{F}^{w,s}_t\}_{t\geq 0}\mbox{-adapted, contunous}\\
    \mbox{from}\;[s,T]\;\mbox{to}\; \mathbb{R}^q,\;\mathbb{P}\mbox{-a.s.}
    \;\mbox{and}\;E_{\mathbb{P}}\Big(
    \sup_{t\in[s,T]}|\xi(t)|^p_{\mathbb{R}^q}\Big)<+\infty\Big\}.
\end{eqnarray*}
    In this section, we let $(x^{s,\zeta},y^{s,\zeta},Y^{s,\zeta}=(Y_i^{s,\zeta})_{i=1}^d)$ be the solution of the FBSDE \eqref{yu-7-9-3}.
\begin{lemma}\label{yu-lemma-7-23-1}
      For every  $p\geq 1$, $(y^{s,\zeta},(Y_i^{s,\zeta})_{i=1}^d)
     \in L^p_{\mathfrak{T}^s(w,\mathbb{P})}(\Omega;C([s,T];\mathbb{R}^n))
     \times (L^p_{\mathfrak{T}^s(w,\mathbb{P})}(\Omega; L^2(s,T;\mathbb{R}^n)))^d$,  and  there exists  a constant $c(p,T)>0$ such that, for any $(s,\zeta)\in[0,T]\times \mathbb{R}^{nl}$, the following inequality holds
\begin{equation*}\label{yu-7-11-10}
    E_{\mathbb{P}}\big(\sup_{s\leq t\leq T}|y^{s,\zeta}(t)|_{\mathbb{R}^n}^p\big)
    +\sum_{i=1}^d E_{\mathbb{P}}\Big(\int_s^T|Y_i^{s,\zeta}(t)|^2_{\mathbb{R}^n}dt
    \Big)^{\frac{p}{2}}\leq c(p,T)E_{\mathbb{P}} |F_{l,N}(x^{s,\zeta}(T))|_{\mathbb{R}^n}^p.
\end{equation*}
\end{lemma}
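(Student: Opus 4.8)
The plan is to obtain the asserted estimate from the standard $L^p$ a priori theory for linear BSDEs, the only point requiring attention being that every constant can be chosen to depend solely on $p$, $T$ and the (fixed) matrices $A,C_1,\dots,C_d$. First I would record that, since $F_{l,N}\in C_c^\infty(\mathbb{R}^{nl};\mathbb{R}^n)$ by \eqref{yu-7-11-3}, the terminal datum $\xi^{s,\zeta}:=F_{l,N}(x^{s,\zeta}(T))$ satisfies $|\xi^{s,\zeta}|_{\mathbb{R}^n}\le\|F_{l,N}\|_{L^\infty}$ $\mathbb{P}$-a.s.; in particular $\xi^{s,\zeta}\in L^p_{\mathcal{F}^{w,s}_T}(\Omega,\mathbb{P};\mathbb{R}^n)$ for every $p\ge1$, so by the well-posedness of linear BSDEs (e.g. \cite[Chapter 4, Theorem 4.2]{Lv-Zhang}) the pair $(y^{s,\zeta},Y^{s,\zeta})$ is the unique solution in the stated solution spaces, and what remains is the quantitative bound, from which the claimed membership in $L^p_{\mathfrak{T}^s(w,\mathbb{P})}(\Omega;C([s,T];\mathbb{R}^n))$ and $(L^p_{\mathfrak{T}^s(w,\mathbb{P})}(\Omega;L^2(s,T;\mathbb{R}^n)))^d$ follows at once.

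For $p\ge2$ I would apply It\^o's formula to $t\mapsto e^{\beta t}|y^{s,\zeta}(t)|_{\mathbb{R}^n}^p$ on $[s,T]$, with $\beta:=\beta(p,A,C)$ to be chosen. Using the linear form of the generator, $\big|{-}A^\top y-\sum_{i=1}^dC_i^\top Y_i\big|_{\mathbb{R}^n}\le|A|\,|y|_{\mathbb{R}^n}+\sum_{i=1}^d|C_i|\,|Y_i|_{\mathbb{R}^n}$, together with Young's inequality, the contribution of the generator is dominated by $\varepsilon$ times the correction term $\tfrac{p(p-1)}{2}e^{\beta t}|y|_{\mathbb{R}^n}^{p-2}\sum_{i}|Y_i|_{\mathbb{R}^n}^2$ plus $c(\varepsilon,p,A,C)\,e^{\beta t}|y|_{\mathbb{R}^n}^p$; choosing $\beta$ large to absorb the latter and $\varepsilon$ small to absorb the former into the It\^o correction, then taking expectations and using the Burkholder--Davis--Gundy (BDG) inequality on $\sum_i\int_\cdot^T Y_i\,dw^i$, one gets $\sup_{s\le t\le T}E_{\mathbb{P}}|y^{s,\zeta}(t)|_{\mathbb{R}^n}^p+\sum_{i=1}^dE_{\mathbb{P}}\big(\int_s^T|Y_i^{s,\zeta}(t)|_{\mathbb{R}^n}^2dt\big)^{p/2}\le c(p,T)E_{\mathbb{P}}|\xi^{s,\zeta}|_{\mathbb{R}^n}^p$, and a further BDG estimate on the stochastic integral upgrades $\sup_t E_{\mathbb{P}}|y|^p$ to $E_{\mathbb{P}}\sup_t|y|^p$, the extra term being reabsorbed by the bound already obtained. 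For $1\le p<2$ the same scheme works after replacing $|y|_{\mathbb{R}^n}^p$ by $(\varepsilon+|y|_{\mathbb{R}^n}^2)^{p/2}$, differentiating, and letting $\varepsilon\downarrow0$; alternatively one may simply quote the $L^p$ a priori estimate for BSDEs whose generator is globally Lipschitz in $(y,Y)$ with constant $|A|\vee\max_i|C_i|$.

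The essential point for the statement is uniformity: in every step above the constants depend only on $p$, $T$ and the data $A,C_1,\dots,C_d$, never on $(s,\zeta)$, which enters solely through $\xi^{s,\zeta}$ on the right-hand side; the forward component $x^{s,\zeta}$ plays no role in the backward estimate beyond fixing this terminal value, and the shift of time origin from $0$ to $s$ (with the filtration $\{\mathcal{F}^{w,s}_t\}$) is immaterial since $w(s+\cdot)-w(s)$ is again a Brownian motion. Hence one single $c(p,T)$ works for all $(s,\zeta)\in[0,T]\times\mathbb{R}^{nl}$. I do not expect a genuine obstacle here; the only mildly delicate point is the case $1\le p<2$, where It\^o's formula applied directly to $|y|^p$ is not licit near $y=0$ and must be handled by the $(\varepsilon+|y|^2)^{p/2}$-regularization (or by invoking the known $L^p$-theory), and one should keep in mind that it is precisely the compact support of $F_{l,N}$ that renders the right-hand side finite for every $p\ge1$.
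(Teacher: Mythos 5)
Your proposal is correct and is essentially the paper's own argument: the paper simply omits the proof, citing the standard Pardoux--Peng a priori $L^p$-estimate strategy (It\^o's formula on $|y|^p$, Young plus Burkholder--Davis--Gundy absorption, constants depending only on $p$, $T$ and $A,C_1,\dots,C_d$, with $(s,\zeta)$ entering only through the terminal datum), which is exactly what you carry out. One caveat worth recording: at the endpoint $p=1$ both routes you mention degenerate (the It\^o correction term carries the factor $p-1$, so the Young-inequality absorption of the $\sum_i C_i^\top Y_i$ term fails, and the quoted Lipschitz-in-$Y$ $L^p$ theory requires $p>1$), so your scheme really proves the estimate for $p>1$ — which is all that is used later in the paper, where only $p\geq 2$ is invoked.
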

    The proof of this lemma follows the standard three-step strategy outlined in \cite{Pardoux-Peng-1990} (see also Lemma 2.1 in \cite{Pardoux-Peng-1992}). However, due to the linearity of the forward-backward stochastic differential equation (FBSDE) \eqref{yu-7-9-3}, only two steps are necessary. Therefore, we omit the detailed proof in this paper.

\begin{lemma}\label{yu-lemma-7-23-2}
     Given $(s,\zeta)\in[0,T]\times \mathbb{R}^{nl}$, the following statements hold:
    \begin{enumerate}
      \item [$(a)$] $(y^{s,\zeta}, Y^{s,\zeta})\in L^2(s,T;\mathcal{D}_{\mathfrak{T}^s(w,\mathbb{P}),n}^{1,2}(s,T))
          \times (L^2(s,T;\mathcal{D}_{\mathfrak{T}^s(w,\mathbb{P}),n}^{1,2}(s,T)))^d$ and, for any $\theta\in[s,T]$, $j\in\{1,2,\ldots,d\}$,
      \begin{equation}\label{yu-7-16-6}
    (\mathbb{D}_\theta^jy^{s,\zeta}, \mathbb{D}_\theta^jY^{s,\zeta}:=(\mathbb{D}_\theta^jY^{s,\zeta}_i)_{i=1}^d)
    \in L^2(\Omega\times(s,T), \mathbb{P}\times \lambda;\mathbb{R}^{n})\times L^2(\Omega\times(s,T), \mathbb{P}\times\lambda;\mathbb{R}^{n\times d})
\end{equation}
 and
\begin{eqnarray}\label{yu-7-16-7}
     \mathbb{D}_\theta^jy^{s,\zeta}(t)
    &=&\nabla F_{l,N}(x^{s,\zeta}(T))
    h^\top (\theta)f_j+\int_t^T\Big(A^\top \mathbb{D}_\theta^jy^{s,\zeta}(\sigma)+\sum_{i=1}^dC_i^\top \mathbb{D}_\theta^jY^{s,\zeta}_i(\sigma)\Big)d\sigma\nonumber\\
    &\;&-\sum_{i=1}^d\int_t^T\mathbb{D}_\theta^jY^{s,\zeta}_i(\sigma)dw^i(\sigma)
    \;\;\mbox{for any}\;\;t\in
    [\theta,T].
    \end{eqnarray}
       \item[$(b)$] $y^{s,\zeta}(\cdot)$ and $\mathbb{D}^{j}_\theta y^{s,\zeta}(\cdot)$ ($j\in\{1,2,\ldots,d\}$) have continuous versions over $[s,T]$ and $[\theta,T]$ respectively, where $\theta\in[s,T]$.
       \item[$(c)$] $\mathbb{D}^{j}_\theta y^{s,\zeta}(\theta)=Y_j^{s,\zeta}(\theta)$ $\mathbb{P}$-a.s. for any $\theta\in[s,T]$ and $j\in\{1,2,\ldots,d\}$.
    \end{enumerate}
\end{lemma}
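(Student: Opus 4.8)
The plan is to follow the Malliavin-calculus strategy of Pardoux--Peng \cite{Pardoux-Peng-1992} (Section 5), which is considerably lighter here because the forward equation in \eqref{yu-7-9-3} has deterministic coefficients and the backward equation is linear with constant coefficients. \textbf{Step 1 (forward component and terminal datum).} Since $H^\top(\cdot)$ and $h^\top(\cdot)$ are deterministic and bounded (continuous on $[0,T]$), the forward equation is solved explicitly by $x^{s,\zeta}(t)=\zeta-\int_s^t H^\top(\sigma)\,d\sigma+\int_s^t h^\top(\sigma)\,dw(\sigma)$, a Gaussian process; hence $x^{s,\zeta}(t)$ lies in the domain of $\mathbb D$ with all moments finite and the \emph{deterministic} derivative $\mathbb D_\theta x^{s,\zeta}(t)=h^\top(\theta)\mathbf 1_{[s,t]}(\theta)$. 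Because $F_{l,N}\in C_c^\infty(\mathbb R^{nl};\mathbb R^n)$ has bounded derivatives of every order, the chain rule for $\mathbb D$ gives that $F_{l,N}(x^{s,\zeta}(T))$ is Malliavin differentiable with $\mathbb D_\theta^j F_{l,N}(x^{s,\zeta}(T))=\nabla F_{l,N}(x^{s,\zeta}(T))\,h^\top(\theta)f_j$ for $\theta\in[s,T]$, $j\in\{1,\dots,d\}$, and with $L^p$-integrable derivative for all $p\ge1$.

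\textbf{Step 2 (part (a)).} I would run the Picard iteration for the BSDE in \eqref{yu-7-9-3}: with $(y^{(0)},Y^{(0)})=(0,0)$, set $y^{(m+1)}(t)=F_{l,N}(x^{s,\zeta}(T))+\int_t^T\big(A^\top y^{(m)}(\sigma)+\sum_i C_i^\top Y_i^{(m)}(\sigma)\big)d\sigma-\sum_i\int_t^T Y_i^{(m+1)}(\sigma)\,dw^i(\sigma)$. Using Step 1, the fact that $\mathbb D$ is closed, and the Itô-integral differentiation rule (whose boundary term $\delta_{ij}Y_i(\theta)$ is present only when the lower limit is $\le\theta$, hence absent for $t\ge\theta$), one checks inductively that every $(y^{(m)},Y^{(m)})$ is Malliavin differentiable with $L^2$-integrable derivative and that $(\mathbb D_\theta^j y^{(m)},\mathbb D_\theta^j Y^{(m)})$ solves the Picard scheme for the \emph{linear} BSDE \eqref{yu-7-16-7}. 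The a priori $L^2$-estimates for linear BSDEs used in Lemma \ref{yu-lemma-7-23-1} then yield uniform bounds on the Malliavin norms and convergence of both $(y^{(m)},Y^{(m)})$ and its derivative; closedness of $\mathbb D$ identifies the limit of the derivatives with the derivative of the limit, establishing \eqref{yu-7-16-6}--\eqref{yu-7-16-7}. Alternatively one may cite \cite[Proposition 5.3]{Pardoux-Peng-1992} directly, its hypotheses being trivially satisfied by a linear constant-coefficient generator and a smooth bounded terminal datum.

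\textbf{Step 3 (parts (b) and (c)).} For fixed $\theta$, \eqref{yu-7-16-7} is a linear BSDE on $[\theta,T]$ whose terminal value $\nabla F_{l,N}(x^{s,\zeta}(T))h^\top(\theta)f_j$ lies in $L^p$ for all $p$ (bounded $\nabla F_{l,N}$, bounded $h$); so, exactly as for $y^{s,\zeta}$ in Lemma \ref{yu-lemma-7-23-1}, both $y^{s,\zeta}(\cdot)$ on $[s,T]$ and $\mathbb D_\theta^j y^{s,\zeta}(\cdot)$ on $[\theta,T]$ admit continuous versions, which gives part (b). For part (c), fix $t<\theta$; since $y^{s,\zeta}(t)$ is $\mathcal F^{w,s}_t$-measurable, $\mathbb D_\theta^j y^{s,\zeta}(t)=0$. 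Applying $\mathbb D_\theta^j$ to the integral form $y^{s,\zeta}(t)=F_{l,N}(x^{s,\zeta}(T))+\int_t^T\big(A^\top y^{s,\zeta}+\sum_i C_i^\top Y_i^{s,\zeta}\big)d\sigma-\sum_i\int_t^T Y_i^{s,\zeta}\,dw^i$, using the boundary term $\sum_i\delta_{ij}Y_i^{s,\zeta}(\theta)=Y_j^{s,\zeta}(\theta)$ and the vanishing of $\mathbb D_\theta^j$ on $\mathcal F^{w,s}_\sigma$-measurable quantities for $\sigma<\theta$, one obtains $0=-Y_j^{s,\zeta}(\theta)+\big[\mathbb D_\theta^j F_{l,N}(x^{s,\zeta}(T))+\int_\theta^T\big(A^\top\mathbb D_\theta^j y^{s,\zeta}+\sum_i C_i^\top\mathbb D_\theta^j Y_i^{s,\zeta}\big)d\sigma-\sum_i\int_\theta^T\mathbb D_\theta^j Y_i^{s,\zeta}\,dw^i\big]$, and the bracket equals $\mathbb D_\theta^j y^{s,\zeta}(\theta)$ by \eqref{yu-7-16-7} evaluated at $t=\theta$. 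Hence $\mathbb D_\theta^j y^{s,\zeta}(\theta)=Y_j^{s,\zeta}(\theta)$, $\mathbb P$-a.s., the continuity from part (b) legitimizing these pointwise-in-time evaluations.

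The main obstacle is Step 2: proving that $(y^{s,\zeta},Y^{s,\zeta})$ is Malliavin differentiable and that its derivative genuinely solves \eqref{yu-7-16-7}. The bookkeeping of the Picard scheme, the uniform Malliavin-norm estimates, and the limit passage via closedness of $\mathbb D$ are where the work lies, although the linearity and smoothness of all data keep every individual estimate routine.
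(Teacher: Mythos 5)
Your proposal is correct and, for parts $(a)$ and $(c)$, follows essentially the paper's own route: Malliavin-differentiate the Picard scheme for the BSDE, pass to the limit via closedness of $\mathbb{D}$ to get \eqref{yu-7-16-6}--\eqref{yu-7-16-7}, and then read off $\mathbb{D}^j_\theta y^{s,\zeta}(\theta)=Y_j^{s,\zeta}(\theta)$ from the boundary term of the It\^{o}-integral differentiation rule together with the continuity in $(b)$ (the paper applies $\mathbb{D}^j_\theta$ to the forward form on $[s,t]$ and lets $t\to\theta^+$, you apply it to the backward form at some $t<\theta$ where the derivative vanishes and use \eqref{yu-7-16-7} at $t=\theta$ --- the same computation, organized differently, and both need $\theta>s$, the endpoint being immaterial). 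Where you genuinely diverge is part $(b)$ and the convergence bookkeeping. For $(b)$ the paper verifies a Malliavin--Sobolev moment condition on the stochastic integral and invokes Nualart's Proposition 3.2.2 to produce continuous versions, whereas you get them more cheaply by noting that, for fixed $\theta$, \eqref{yu-7-16-7} is a linear BSDE with $L^p$ terminal datum whose unique solution already has continuous paths (as in Lemma \ref{yu-lemma-7-23-1}), and identifying $\mathbb{D}^j_\theta y^{s,\zeta}$ with it by uniqueness; this is a legitimate and simpler argument. In Step 2 the paper is more explicit than you about why the differentiated Picard iterates converge: the contraction constant carries a factor $(T-t)$, so the paper first works on a short terminal window $[T-r_0,T]$ and then patches backwards by induction; your ``uniform bounds and convergence'' glosses this, but the fix (localization or a weighted norm) is routine and you flag it as the main work. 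Two small inaccuracies worth correcting: the boundary term $\delta_{ij}Y_i(\theta)$ is absent only for $t>\theta$ (at $t=\theta$ it is exactly the null-set ambiguity that part $(c)$ and the continuous version resolve), and the off-the-shelf result you want to cite is Proposition 5.3 of El Karoui--Peng--Quenez rather than of \cite{Pardoux-Peng-1992}, whose corresponding statement (Proposition 2.2 and Lemma 2.3, the latter being what the paper uses) is the correct reference here.
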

\begin{proof}[\emph{\textbf{Proof}}]
     We divide the proof into three steps.

 {\bf Step 1:  The proof of $(a)$.}  It is  evident that  $x^{s,\zeta}\in \mathcal{D}_{\mathfrak{T}^s(w,\mathbb{P}),nl}^{1,2}(s,T)$ and satisfies
\begin{equation}\label{yu-7-15-1}
    \mathbb{D}^j_tx^{s,\zeta}(T)=h^\top (t)f_j,\;\;\mbox{for any}\;\;t\in[s,T]
    \;\;\mbox{and each}\;\;j\in\{1,2,\ldots,d\}.
\end{equation}
    Therefore,  $\mathbb{D}x^{s,\zeta}(T)\in C([s,T];\mathbb{R}^{(nl)\times d})$ and
\begin{equation}\label{yu-7-15-2}
    \mathbb{D}F_{l,N}(x^{s,\zeta}(T))=\nabla F_{l,N}(x^{s,\zeta}(T))
    \mathbb{D}
    x^{s,\zeta}(T)\in (C([s,T];L^2_{\mathcal{F}^{w,s}_T}(\Omega,\mathbb{P}; \mathbb{R}^n)))^d,
\end{equation}
     i.e., $F_{l,N}(x^{s,\zeta}(T))\in
     \mathcal{D}_{\mathfrak{T}^s(w,\mathbb{P}),n}^{1,2}(s,T)^d$. Moreover,
     by Lemma \ref{yu-lemma-7-23-1}, it is well known from \eqref{yu-7-11-5} that, for
     any  $\theta\in(s,T]$,
$(\mathbb{D}_\theta^jy^{s,\zeta}(t), \mathbb{D}_\theta^jY^{s,\zeta}(t))=0$
    for any $t\in[s,\theta)$.
      Now, let
$$
\begin{cases}
    y^0\in L^2_{\mathfrak{T}^s(w,\mathbb{P})}(\Omega;C([s,T];\mathbb{R}^n))\cap L^2(s,T;\mathcal{D}^{1,2}_{\mathfrak{T}^s(w,\mathbb{P}),n}(s,T)),\\
    Y^0:=(Y^0_i)_{i=1}^d\in (L^2_{\mathfrak{T}^s(w,\mathbb{P})}(s,T;\mathbb{R}^n))^d\cap (L^2(s,T;\mathcal{D}^{1,2}_{\mathfrak{T}^s(w,\mathbb{P}),n}(s,T))^d
\end{cases}
$$
be arbitrarily.
    By \eqref{yu-7-15-2}, we have
\begin{equation}\label{yu-7-15-3}
    F_{l,N}(x^{s,\zeta}(T))+\int_{s}^T\Big(A^\top y^0(\sigma)+\sum_{i=1}^dC_i^\top Y^0_i(\sigma)\Big)d\sigma\in \mathcal{D}^{1,2}_{\mathfrak{T}^s(w,\mathbb{P}),n}(s,T).
\end{equation}
     By adopting the three-step strategy outlined in \cite{Pardoux-Peng-1990}, we can construct a Cauchy sequence
    $\{(y^k,Y^k:=(Y^k_i)_{i=1}^d)\}_{k\in\mathbb{N}^+}\in L^2_{\mathfrak{T}^s(w,\mathbb{P})}(\Omega;C([s,T];\mathbb{R}^n))\times (L^2_{\mathfrak{T}^s(w,\mathbb{P})}(s,T;\mathbb{R}^n))^d$ such that
    $(1)$ it holds that
    \begin{eqnarray}\label{yu-7-15-10}
    (y^k,Y^k)
    \to (y^{s,\zeta},Y^{s,\zeta})\;\;\mbox{as}\;\;k\to+\infty
\end{eqnarray}
    (in $L^2_{\mathfrak{T}^s(w,\mathbb{P})}(\Omega;C([s,T];\mathbb{R}^n))\times (L^2_{\mathfrak{T}^s(w,\mathbb{P})}(s,T;\mathbb{R}^n))^d$); $(2)$  For any  $t\in[s,T]$ and $k\in\mathbb{N}^+$,
\begin{eqnarray}\label{yu-7-13-5}
     \sum_{i=1}^d\int_t^TY^{k+1}_i(\sigma)dw^i(\sigma)
    =F_{l,N}(x^{s,\zeta}(T))-y^{k+1}(t)
    +\int_t^T\Big(A^\top y^k(\sigma)+\sum_{i=1}^dC_i^\top Y^k_i(\sigma)\Big)d\sigma;
\end{eqnarray}
  $(3)$ For each $k\in\mathbb{N}^+$,
\begin{eqnarray}\label{yu-7-15-7}
    \sum_{i=1}^d\int_s^TY^{k+1}_i(\sigma)dw^i(\sigma)
    &=&F_{l,N}(x^{s,\zeta}(T))+\int_{s}^T\Big(A^\top y^k(\sigma)+\sum_{i=1}^dC_i^\top Y^k_i(\sigma)\Big)d\sigma\nonumber\\
    &\;&- E_{\mathbb{P}}\Big[F_{l,N}(x^{s,\zeta}(T))+\int_{s}^T\Big(A^\top y^k(\sigma)+\sum_{i=1}^dC_i^\top Y^k_i(\sigma)\Big)d\sigma\Big]
\end{eqnarray}
    holds.  By \eqref{yu-7-15-3}, \eqref{yu-7-15-7}, and applying
    \cite[Lemma 2.3]{Pardoux-Peng-1992}, we can deduce that
$Y^1_j\in L^2(s,T;\mathcal{D}^{1,2}_{\mathfrak{T}^s(w,\mathbb{P}),n}(s,T))$
    for each $j\in\{1,2,\ldots,d\}$.
Consequently, from   \eqref{yu-7-13-5}, we can infer that
$y^1(t)
    \in \mathcal{D}^{1,2}_{\mathfrak{T}^s(w,\mathbb{P}),n}(s,T)$ for any $t\in[s,T]$.
     By applying an inductive argument, we can demonstrate that, for any  $k\in\mathbb{N}$,
$Y^k_i\in L^2(s,T;\mathcal{D}^{1,2}_{\mathfrak{T}^s(w,\mathbb{P}),n}(s,T))$
     for each $i\in\{1,2,\ldots,d\}$ and
     $y^k(t)
    \in \mathcal{D}^{1,2}_{\mathfrak{T}^s(w,\mathbb{P}),n}(s,T)$ for any $t\in[s,T]$.
\
  Moreover, utilizing \eqref{yu-7-13-5} and \eqref{yu-7-15-2}, for every $\theta\in [s,T)$ and $j\in\{1,2,\ldots,d\}$, we have
\begin{eqnarray*}\label{yu-7-16-1}
    \mathbb{D}_\theta^jy^{k+1}(t)
    &=&\nabla F_{l,N}(x^{s,\zeta}(T))
    \mathbb{D}^j_{\theta}
    x^{s,\zeta}(T)+\int_t^T\Big(A^\top \mathbb{D}_\theta^jy^k(\sigma)+\sum_{i=1}^dC_i^\top \mathbb{D}_\theta^jY^k_i(\sigma)\Big)d\sigma\nonumber\\
    &\;&-\sum_{i=1}^d\int_t^T\mathbb{D}_\theta^jY^k_i(\sigma)dw^i(\sigma)
    \;\;\mbox{for any}\;\;t\in[\theta,T].
\end{eqnarray*}
            By adhering to the first step of the three-step strategy presented in \cite{Pardoux-Peng-1990}, we can deduce that, for any    $k\in\mathbb{N}^+$,
\begin{eqnarray}\label{yu-7-13-6}
    &\;&E_{\mathbb{P}}|\mathbb{D}_\theta^jy^{k+1}(t)
    -\mathbb{D}_\theta^jy^k(t)|^2_{\mathbb{R}^n}
    +\sum_{i=1}^d E_{\mathbb{P}}
    \int_t^T|\mathbb{D}_\theta^jY_i^{k+1}(\sigma)
    -\mathbb{D}_\theta^jY_i^k(\sigma)|^2_{\mathbb{R}^n}d\sigma\nonumber\\
    &\leq&(T-t)c
    \biggl(E_{\mathbb{P}}\int_t^T|\mathbb{D}_\theta^jy^{k}(\sigma)
    -\mathbb{D}_\theta^jy^{k-1}(\sigma)|^2_{\mathbb{R}^n}d\sigma+\sum_{i=1}^d E_{\mathbb{P}}
    \int_t^T|\mathbb{D}_\theta^jY_i^{k}(\sigma)
    -\mathbb{D}_\theta^jY_i^{k-1}(\sigma)|^2_{\mathbb{R}^n}d\sigma\biggl).
\end{eqnarray}
    It follows that the sequence$\{(\mathbb{D}_\theta^jy^{k}, \mathbb{D}_\theta^jY^k:=(\mathbb{D}_\theta^jY^k_i)_{i=1}^d)\}_{k\in\mathbb{N}^+}$
      forms a Cauchy sequence in  $L^2(\Omega\times((T-r_0)\vee\theta,T), \mathbb{P}\times \lambda;\mathbb{R}^{n})\times L^2(\Omega\times((T-r_0)\vee\theta,T), \mathbb{P}\times \lambda;\mathbb{R}^{n\times d})$,
    where $r_0 \in(0,(c^{-1}+\frac{1}{4})^{\frac{1}{2}}-\frac{1}{2})$ is fixed.
    Moreover, by \eqref{yu-7-13-6} and \eqref{yu-7-15-10}, we also have
    $(y^k,Y^k)\in L^2((T-r_0),T;\mathcal{D}_{\mathfrak{T}^s(w,\mathbb{P}),n}^{1,2}(s,T)
    \times (L^2(T-r_0,T;\mathcal{D}_{\mathfrak{T}^s(w,\mathbb{P}),n}^{1,2}(s,T))^d$.
  Therefore, by the closedness of $\mathbb{D}$, \eqref{yu-7-13-6} and \eqref{yu-7-15-10},  we can conclude that $(y^{s,\zeta}, Y^{s,\zeta})\in L^2(T-r_0,T;\mathcal{D}_{\mathfrak{T}^s(w,\mathbb{P}),n}^{1,2}(s,T))
          \times (L^2(T-r_0,T;\mathcal{D}_{\mathfrak{T}^s(w,\mathbb{P}),n}^{1,2}(s,T)))^d$,
\begin{eqnarray}\label{yu-7-16-3}
    (\mathbb{D}_\theta^jy^{s,\zeta}, \mathbb{D}_\theta^jY^{s,\zeta})
    \in L^2(\Omega\times((T-r_0)\vee\theta,T), \mathbb{P}\times \lambda;\mathbb{R}^{n})\nonumber\\
    \times L^2(\Omega\times((T-r_0)\vee\theta,T), \mathbb{P}\times \lambda;\mathbb{R}^{n\times d})
\end{eqnarray}
and
\begin{eqnarray}\label{yu-7-16-2}
    \mathbb{D}_\theta^jy^{s,\zeta}(t)
    &=&\nabla F_{l,N}(x^{s,\zeta}(T))
    \mathbb{D}^j_{\theta}
    x^{s,\zeta}(T)+\int_t^T\Big(A^\top \mathbb{D}_\theta^jy^{s,\zeta}(\sigma)+\sum_{i=1}^dC_i^\top \mathbb{D}_\theta^jY_i^{s,\zeta}(\sigma)\Big)d\sigma\nonumber\\
    &\;&-\sum_{i=1}^d\int_t^T\mathbb{D}_\theta^jY^{s,\zeta}_i(\sigma)dw^i(\sigma)
    \;\;\mbox{for any}\;\;t\in
    \begin{cases}[T-r_0,T],&\mbox{if}\;\;l(\theta)>0,\\
    [\theta,T],&\mbox{if}\;\;l(\theta)=0,
    \end{cases}
\end{eqnarray}
    where  $l(\theta):=
    \left[\frac{T-\theta}{r_0}\right]$. If $l(\theta)=0$,
    then the proof of \eqref{yu-7-16-7} is completed based on \eqref{yu-7-16-3} and \eqref{yu-7-16-2}. If $l(\theta)\geq 1$, it is evident that $[\theta,T]$ can be decomposed as
    $[\theta,T]=[\theta,T-l(\theta)r_0]
    \cup(\cup_{i=1}^{l(\theta)}[T-ir_0,T-(i-1)r_0])$. In this case, we can replace $T$ and $F_{l,N}(x^{s,\zeta}(T))$ with  $T-r_0$ and $y^{s,\zeta}(T-r_0)$ respectively
     (noting that, by \eqref{yu-7-16-3} and \eqref{yu-7-16-2}, $\mathbb{D}_\theta^jy^{s,\zeta}(T-r_0)\in L^2_{\mathcal{F}_{T-r_0}^{w,s}}(\Omega,\mathbb{P};\mathbb{R}^n)$).
     Using the same reasoning, we can demonstrate that $(y^{s,\zeta}, Y^{s,\zeta})\in L^2(T-2r_0,T-r_0;\mathcal{D}_{\mathfrak{T}^s(w,\mathbb{P}),n}^{1,2}(s,T))
          \times (L^2(T-2r_0,T-r_0;\mathcal{D}_{\mathfrak{T}^s(w,\mathbb{P}),n}^{1,2}(s,T)))^d$,
\begin{eqnarray*}\label{yu-7-16-4}
    (\mathbb{D}_\theta^jy^{s,\zeta}, \mathbb{D}_\theta^jY^{s,\zeta})
    \in L^2(\Omega\times((T-2r_0)\vee\theta,T-r_0), \mathbb{P}\times \lambda;\mathbb{R}^{n})\nonumber\\
    \times L^2(\Omega\times((T-2r_0)\vee\theta,T-r_0), \mathbb{P}\times \lambda;\mathbb{R}^{n\times d})
\end{eqnarray*}
    and
\begin{eqnarray*}\label{yu-7-16-5}
     \mathbb{D}_\theta^jy^{s,\zeta}(t)
    &=&\mathbb{D}_\theta^jy^{s,\zeta}(T-r_0)+\int_t^{T-r_0}\Big(A^\top \mathbb{D}_\theta^jy^{s,\zeta}(\sigma)+\sum_{i=1}^dC_i^\top \mathbb{D}_\theta^jY_i^{s,\zeta}(\sigma)\Big)d\sigma\nonumber\\
    &\;&-\sum_{i=1}^d\int_t^{T-r_0}\mathbb{D}_\theta^jY^{s,\zeta}_i(\sigma)dw^i(\sigma)
    \;\;\mbox{for any}\;\;t\in
    \begin{cases}[T-2r_0,T-r_0],&\mbox{if}\;\;l(\theta)>1,\\
    [\theta,T-r_0],&\mbox{if}\;\;l(\theta)=1.
    \end{cases}
\end{eqnarray*}
     Thus, employing an inductive argument and \eqref{yu-7-15-1}, we can establish that \eqref{yu-7-16-7} holds. Leveraging \eqref{yu-7-16-7}, Lemma \ref{yu-lemma-7-23-1}, and the facts that  $\mathbb{D}^{j}_\theta y^{s,\zeta}(\sigma)=0$ and $\mathbb{D}^{j}_\theta Y_i^{s,\zeta}(\sigma)=0$ for any $\sigma\in[s,\theta)$ when $\theta>s$, we can  conclude that  $(a)$ is true.

    {\bf Step 2: The proof of $(b)$.} Utilizing \eqref{yu-7-15-1}, Claim (a), and Lemma \ref{yu-lemma-7-23-1} (applied to \eqref{yu-7-16-7}), we can readily deduce that, for
    every  $p\geq 1$,
\begin{eqnarray}\label{yu-7-18-1}
   \sum_{j=1}^dE_{\mathbb{P}}\int_s^T\Big(\int_s^T\sum_{i=1}^n|\mathbb{D}^j_\theta Y_i^{s,\zeta}(t)|^2_{\mathbb{R}^{n}}dt\Big)^{\frac{p}{2}}d\theta
    =\sum_{j=1}^dE_{\mathbb{P}}\int_s^T
    \Big(\int_{\theta}^T\sum_{i=1}^d|\mathbb{D}^j_\theta Y_i^{s,\zeta}(t)|^2_{\mathbb{R}^{n}}dt\Big)^{\frac{p}{2}}d\theta
       \nonumber\\
       \leq c(p,s,T)\|h^\top\|^p_{C([s,T];\mathbb{R}^{(nl)\times d})}.
\end{eqnarray}
       Let $\psi(t):=\sum_{i=1}^d\int_t^TY^{s,\zeta}_i(\sigma)dw^i(\sigma)$ for $t\in[s,T]$.
     It is straightforward to verify that, for any $(\theta,t)\in[s,T]\times[s,T]$,
\begin{equation*}
     \sum_{j=1}^d|\mathbb{D}_\theta^j\psi(t)|_{\mathbb{R}^{n}}^2
     \leq c\sum_{i=1}^d\Big(|Y_i^{s,\zeta}(\theta)|^2_{\mathbb{R}^n}
     +\sum_{j=1}^d\Big|\int_t^T\mathbb{D}_\theta^jY_i(\sigma)dw^i(\sigma)
     \Big|_{\mathbb{R}^n}^2\Big).
\end{equation*}
         By applying the Bennett-Durrett-Gundy inequality, for each $p\geq 2$, we obtain
\begin{eqnarray*}\label{yu-10-22-b-3}
    &\;&E_{\mathbb{P}}\Big(\sum_{j=1}^d
    \int_s^T|\mathbb{D}^j_\theta\psi(t)|_{\mathbb{R}^n}^2
    d\theta\Big)^\frac{p}{2}\nonumber\\
    &\leq& c(p,s,T)\sum_{i=1}^d E_{\mathbb{P}}
    \Big[\Big(\int_s^T|Y_i^{s,\zeta}(\theta)|^2_{\mathbb{R}^n}
    d\theta\Big)^{\frac{p}{2}}
    +\sum_{j=1}^d\int_s^T\Big|\int_t^T\mathbb{D}^j_\theta Y_i^{s,\zeta}(\sigma)dw^i(\sigma)\Big|^p_{\mathbb{R}^n}d\theta\Big]
    \nonumber\\
    &\leq& c(p,s,T)\sum_{i=1}^d E_{\mathbb{P}}
    \Big[\Big(\int_s^T|Y_i^{s,\zeta}(\theta)|^2_{\mathbb{R}^n}
    d\theta\Big)^{\frac{p}{2}}
    +\sum_{j=1}^d\int_s^T\Big(\int_s^T|\mathbb{D}^j_\theta Y_i^{s,\zeta}(\sigma)|_{\mathbb{R}^n}^2d\sigma\Big)^{\frac{p}{2}} d\theta\Big].
\end{eqnarray*}
    Thus, combining Lemma \ref{yu-lemma-7-23-1} and \eqref{yu-7-18-1}, we conclude that
    $E_{\mathbb{P}}\int_s^T\big(\sum_{j=1}^d\int_s^T
    |\mathbb{D}^j_\theta\psi(t)|^2_{\mathbb{R}^n}d\theta\big)^{\frac{p}{2}}dt<+\infty$.
     Therefore, according to \cite[Proposition 3.2.2, Chapter 3]{Nualart}, we can assert that
    $\sum_{i=1}^d\int_t^TY^{s,\zeta}_i(\sigma)dw^i(\sigma)$  possesses a continuous version. Furthermore, by Lemma \ref{yu-lemma-7-23-1}, it is evident that the term $\int_t^T\big(A^\top y^{s,\zeta}(\sigma)+\sum_{i=1}^dC_i^\top Y_i^{s,\zeta}(\sigma)\big)
    d\sigma$ also has a continuous version. It follows that $y^{s,\zeta}(\cdot)$ has a continuous version over $[s,T]$. By substituting $s$ and $F_{l,N}(x^{s,\zeta}(T))$ with $\theta$ and  $F_{l,N}(x^{s,\zeta}(T))
    \mathbb{D}^j_{\theta}
    x^{s,\zeta}(T)$ respectively, and utilizing \eqref{yu-7-16-7}, we can replicate the aforementioned procedure to demonstrate that $\mathbb{D}^{j}_\theta y^{s,\zeta}(\cdot)$
  possesses a continuous version over $[\theta,T]$ for each $j\in\{1,2,\ldots,d\}$. Therefore, the claim $(b)$ is true.

    {\bf Step 3: The proof of $(c)$.} Indeed, for each $\theta\in(s,T]$,
    based on \eqref{yu-7-16-6} and the equation
\begin{eqnarray*}
    y^{s,\zeta}(t)
    =y^{s,\zeta}(s)
    -\int_s^t\Big(A^\top y^{s,\zeta}(\sigma)+\sum_{i=1}^dC_i^\top Y^{s,\zeta}_i(\sigma)\Big)d\sigma
    +\sum_{i=1}^d\int_s^tY_i^{s,\zeta}(\sigma)dw^i(\sigma)\;\;\mbox{for any}\;\;t\in(\theta,T],
\end{eqnarray*}
    we can deduce that, for each $j\in\{1,2,\ldots,d\}$ and any $\theta\in(s,T]$,
\begin{eqnarray}\label{yu-7-16-20}
     \mathbb{D}^{j}_\theta y^{s,\zeta}(t)&=&Y_j^{s,\zeta}(\theta)
     -\int_{\theta}^t\Big(A^\top \mathbb{D}^{j}_\theta y^{s,\zeta}(\sigma)+\sum_{i=1}^dC_i^\top \mathbb{D}^{j}_\theta Y^{s,\zeta}_i(\sigma)\Big)d\sigma\nonumber\\
     &\;&+\sum_{i=1}^d\int_{\theta}^t\mathbb{D}^{j}_\theta Y_i^{s,\zeta}(\sigma)dw^i(\sigma)\;\;\mbox{for any}\;\;t\in(\theta,T].
\end{eqnarray}
       By Claim (b) and taking the limit as $t\to \theta^+$  in \eqref{yu-7-16-20}, we can conclude that  $\mathbb{D}^{j}_\theta y^{s,\zeta}(\theta)=Y_j^{s,\zeta}(\theta)$ $\mathbb{P}$-almost surely for any $\theta\in[s,T]$.
       The proof of Lemma \ref{yu-lemma-7-23-2} is complete.
\end{proof}

\vskip 5pt

\begin{proof}[\emph{{\bf Proof of  Lemma \ref{yu-proposition-7-11-1}}}]
      The proof is organized into two steps.

    {\bf Step 1: The function $\zeta\to (y^{s,\zeta}, Y^{s,\zeta})$  is second-order continuously differentiable $\mathbb{P}\times \lambda$-almost surely, and satisfies the equation
\begin{equation}\label{yu-7-20-bbb00}
    \nabla y^{s,\zeta}(t)=\nabla F_{l,N}(x^{s,\zeta}(T))
        +\int_t^T\Big(A^\top\nabla y^{s,\zeta}(\sigma)
        +\sum_{i=1}^d C^\top_i\nabla Y_i^{s,\zeta}(\sigma)\Big)d\sigma
        -\sum_{i=1}^d\int_t^T \nabla Y_i^{s,\zeta}(\sigma)dw^{i}(\sigma).
\end{equation}
     Furthermore,
$\nabla y^{s,\zeta}(\cdot)$  has a continuous versions over $[s,T]$.}
      For a function  $g=(g_1,g_2,\cdots,g_n)^\top\in C^1(\mathbb{R}^{nl};\mathbb{R}^{n})$,
       for each  $j\in\{1,2,\ldots, nl\}$, we define the difference operator at
       $\xi\in\mathbb{R}^{nl}$ along the direction $e_j$   with step size
       $ h$ (where $h>0$) as
       $\triangle_h^j g(\xi):=h^{-1}(g(\xi+he_j)-g(\xi))$, where $\{e_j\}_{j=1}^{nl}$ represents the $j$-th vector in the standard orthonormal basis of  $\mathbb{R}^{nl}$.
       It is evident that  $\nabla g(\xi)e_j=\lim_{h\to 0}(\triangle_h^j g(\xi))_{i=1}^d$ for each $j\in\{1,2,\ldots,nl\}$.
 Given the following facts:
\begin{equation*}\label{yu-7-17-11}
    F_{l,N}(x_1)-F_{l,N}(x_2)=\Big(\int_0^1\nabla F_{l,N}(x_2+\rho(x_1-x_2))d\rho\Big) (x_1-x_2)\;\;\mbox{for any}\;\;x_1,x_2\in\mathbb{R}^{nl}
\end{equation*}
    and
$\triangle_h^jx^{s,\zeta}(t)=e_j$ for each $j\in\{1,2,\cdots,nl\}$,
    one can readily verify that, for every   $t\in[s,T]$,
\begin{eqnarray}\label{yu-7-19-2}
    \triangle_h^j y^{s,\zeta}(t)=G_{N,h,j}^{s,\zeta}e_j
        +\int_t^T\Big(A^\top\triangle_h^j y^{s,\zeta}(\sigma)
        +\sum_{i=1}^d C^\top_i\triangle_h^j Y_i^{s,\zeta}(\sigma)\Big)d\sigma
        -\sum_{i=1}^d\int_t^T \triangle_h^j Y_i^{s,\zeta}(\sigma)dw^{i}(\sigma)
\end{eqnarray}
     for each $j\in\{1,2,\cdots,nl\}$, where $G_{N,h,j}^{s,\zeta}:=\int_0^1\nabla F_{l,N}(x^{s,\zeta}(T)+\rho he_j)d\rho$. Now, we  fix $j\in\{1,2,\cdots,nl\}$ arbitrarily.  By the definition of $F_{l,N}$, it is clear that
$|G_{N,h,j}^{s,\zeta}-G_{N,h',j}^{s,\zeta}|_{\mathbb{R}^{n\times nl}}\leq
    c(N)|h-h'|$ $\mathbb{P}$-a.s. for any $h,h'\in\mathbb{R}^+$.
    This, along with \eqref{yu-7-19-2} and Lemma \ref{yu-lemma-7-23-1},  implies  that
\begin{equation}\label{yu-7-19-4}
    E_{\mathbb{P}}\sup_{s\leq t\leq T}|\triangle_{h}^j y^{s,\zeta}(t)-\triangle_{h'}^j y^{s,\zeta}(t)|^2_{\mathbb{R}^n}
    +\sum_{i=1}^dE_{\mathbb{P}}
    \int_s^T|\triangle_{h}^jY_i^{s,\zeta}(t)
    -\triangle_{h'}^jY_i^{s,\zeta}(t)|^2_{\mathbb{R}^n}dt
    \leq c(N)|h-h'|^2
\end{equation}
    for any $h,h'\in\mathbb{R}^+$. It follows that, for any
    sequence  $\{h_k\}_{k\in\mathbb{N}}\subset\mathbb{R}^+$ with $\lim_{k\to+\infty}h_k=0$,
    there exists a tuple  $(z^j(\cdot;\zeta),Z^j(\cdot;\zeta):=(Z^j_i(\cdot;\zeta))_{i=1}^d)\in L^2_{\mathfrak{T}^s(w,\mathbb{P})}(\Omega;C([s,T];\mathbb{R}^n))
     \times (L^2_{\mathfrak{T}^s(w,\mathbb{P})}(s,T;\mathbb{R}^n))^d$ such  that
\begin{equation}\label{yu-7-19-5}
    \lim_{k\to+\infty}
    \Big(E_{\mathbb{P}}\sup_{s\leq t\leq T}|\triangle_{h_k}^j y^{s,\zeta}(t)-z^j(t;\zeta)|^2_{\mathbb{R}^n}
    +\sum_{i=1}^dE_{\mathbb{P}}
    \int_s^T|\triangle_{h_k}^jY_i^{s,\zeta}(t)
    -Z^j_i(t;\zeta)|^2_{\mathbb{R}^n}dt\Big)=0.
\end{equation}
    Moreover, by the dominated convergence theorem, we have
$\lim_{h\to0^+}E_{\mathbb{P}}|G_{N,h,j}^{s,\zeta}-\nabla F_{l,N}(x^{s,\zeta}(T))|^2_{\mathbb{R}^{n\times nl}}=0$.
    It follows from \eqref{yu-7-19-2} and \eqref{yu-7-19-5} that
    the tuple $(z^j(\cdot;\zeta),Z^j(\cdot;\zeta))$ satisfies
\begin{equation*}\label{yu-7-20-1}
    z^j(t;\zeta)=\nabla F_{l,N}(x^{s,\zeta}(T))e_j
        +\int_t^T\Big(A^\top z^j(\sigma,\zeta)
        +\sum_{i=1}^d C^\top_iZ^j_i(\sigma,\zeta)\Big)d\sigma
        -\sum_{i=1}^d\int_t^T Z_i^j(\sigma,\zeta)dw^{i}(\sigma).
\end{equation*}
     Furthermore, by \eqref{yu-7-19-4} and \eqref{yu-7-19-5}, it is evident that
\begin{equation*}\label{yu-7-20-2}
    \lim_{h\to0}
    \Big(E_{\mathbb{P}}\sup_{s\leq t\leq T}|\triangle_{h}^j y^{s,\zeta}(t)-z^j(t,\zeta)|^2_{\mathbb{R}^n}
    +\sum_{i=1}^dE_{\mathbb{P}}
    \int_s^T|\triangle_{h}^jY_i^{s,\zeta}(t)
    -Z^j_i(t,\zeta)|^2_{\mathbb{R}^n}dt\Big)=0.
\end{equation*}
     This, combined with the arbitrariness of  $j$, implies  that $\nabla y^{s,\zeta}(\cdot)=z(\cdot;\zeta):=(z^j(\cdot;\zeta))_{j=1}^n$ and
    $\nabla Y_i^{s,\zeta}(\cdot)=Z_i(\cdot;\zeta):=(Z_i^j(\cdot;\zeta))_{j=1}^n$ hold  $\mathbb{P}\times \lambda$-almost surely for each  $i\in\{1,2,\ldots,d\}$.
    Therefore, the function $\zeta\to (y^{s,\zeta}, Y^{s,\zeta})$ is first-order differentiable $\mathbb{P}\times \lambda$-almost surely, and \eqref{yu-7-20-bbb00} holds. Applying the same reasoning to  $(z^j(\cdot;\zeta),Z^j(\cdot;\zeta))$, we can demonstrate  that $(z^j(\cdot;\zeta),Z^j(\cdot;\zeta))$ is also first-order differentiable $\mathbb{P}\times \lambda$-almost surely. Consequently,   $\zeta\to (y^{s,\zeta}, Y^{s,\zeta})$ is second-order differentiable $\mathbb{P}\times \lambda$-almost surely.  The continuity of
    $\zeta\to (y^{s,\zeta}, Y^{s,\zeta})$ and its first and second-order differentiations are straightforward to prove due to the smoothness of
     $F_{l,N}$.

    By replacing  $F_{l,N}(x^{s,\zeta}(T))$ with   $\nabla F_{l,N}(x^{s,\zeta}(T))e_j$,  and following the same reasoning used in the proof of part $(b)$ of Lemma \ref{yu-lemma-7-23-2}, one can easily verify that  $\nabla y^{s,\zeta}(\cdot)e_j$ has a continuous version over $[s,T]$.

    {\bf Step 2: Completion of  the proof.}
    From part (a) of Lemma \ref{yu-lemma-7-23-2}, the claim in {\bf Step 1}, and the uniqueness of the solution to the FBSDE \eqref{yu-7-9-3}, we can conclude that
$\mathbb{D}_\theta y^{s,\zeta}(t)=\nabla y^{s,\zeta}(t)h^{\top}(\theta)$
    for any $0\leq \theta\leq t\leq T$.
     This, combined with part $(c)$ of Lemma \ref{yu-lemma-7-23-2}, implies that
$Y^{s,\zeta}(t)=\nabla y^{s,\zeta}(t)h^{\top}(t)$ for a.e. $t\in[s,T]$.

{\bf Substep 2.1: Show  that $U(s,\zeta):=y^{s,\zeta}(s)$ is in $C^{1,2}([0,T]\times \mathbb{R}^{nl};\mathbb{R}^n)$, bounded and satisfies the equation \eqref{yu-7-22-4}.} By Lemma \ref{yu-lemma-7-23-1}, the claim in {\bf Step 1}, and the definition of
     $F_{l,N}$, it can be readily shown that $U(\cdot,\cdot)$ is bounded and in $C^{0,2}([0,T]\times \mathbb{R}^{nl};\mathbb{R}^n)$.  Other conclusions have been established in \cite[Theorem 3.2]{Pardoux-Peng-1992} using the  It\^{o} formula and the claims in {\bf Step 1}. Therefore, we omit the proof here.

    {\bf Substep 2.2:  Demonstrate the uniqueness of the solution to equation \eqref{yu-7-22-4} and establish that if $U(\cdot,\cdot)\in C^{1,2}([0,T]\times\mathbb{R}^{nl};\mathbb{R}^n)$ is the solution of the equation \eqref{yu-7-22-4}, then  $(y^{s,\zeta}(t),Y^{s,\zeta}(t)):=(U(t,x^{s,\zeta}(t)),\nabla U(t,x^{s,\zeta}(t))h^\top(t))$ solves the BSDE in \eqref{yu-7-9-3}.}
     We first demonstrate that if  $U(\cdot,\cdot)\in C^{1,2}([0,T]\times\mathbb{R}^{nl};\mathbb{R}^n)$ is the solution to the equation \eqref{yu-7-22-4}, then
\begin{equation}\label{yu-7-23-0001}
    (y(\cdot),Y(\cdot)):=(U(\cdot,x^{s,\zeta}(\cdot)),\nabla U(\cdot,x^{s,\zeta}(\cdot))h^\top(\cdot))
\end{equation}
       is the solution to the BSDE in \eqref{yu-7-9-3}.

From \eqref{yu-7-22-4}, it is evident that
\begin{eqnarray}\label{yu-7-23-10}
   U(T,x^{s,\zeta}(T))=F_{l,N}(x^{s,\zeta}(T)).
\end{eqnarray}
    Applying the It\^{o} formula to $U(t,x^{s,\zeta}(t))$, and using  \eqref{yu-7-22-4}, we obtain
\begin{eqnarray*}\label{yu-7-23-11}
 dU(t,x^{s,\zeta}(t))=\Big(-A^\top U(t,x^{s,\zeta}(t))-\sum_{i=1}^d
 C_i^\top \nabla U(t,x^{s,\zeta}(t))h^\top (t)f_i\Big)dt
 +\nabla U(t,x^{s,\zeta}(t))h^\top (t) dw(t).
\end{eqnarray*}
     Combining this with \eqref{yu-7-23-10} and \eqref{yu-7-23-0001}, we conclude tha $(x^{s,\zeta}(\cdot),y(\cdot),Y(t))$  is the solution to
    the equation \eqref{yu-7-9-3}.

    Next, we prove uniqueness. Suppose that \eqref{yu-7-22-4} has two distinct  solutions $U$ and $\widehat{U}$ in $C^{1,2}([0,T]\times\mathbb{R}^{nl};\mathbb{R}^n)$.
    Then, there exists  $(s_0,\zeta_0)
    \in [0,T]\times\mathbb{R}^n$ such  that $U(s_0,\zeta_0)\neq \widehat{U}(s_0,\zeta_0)$.   By the previous claim, this implies that both
    $(y^{s_0,\zeta_0}(t),Y^{s_0,\zeta_0}(t)):=(U(t,x^{s_0,\zeta_0}(t)),\nabla U(t,x^{s_0,\zeta_0}(t))h^\top(t))$ and  $(\widehat{y}^{s_0,\zeta_0}(t),\widehat{Y}^{s_0,\zeta_0}(t)):=
     (\widehat{U}(t,x^{s_0,\zeta_0}(t)),\nabla \widehat{U}(t,x^{s_0,\zeta_0}(t))h^\top(t))$
     are  the solutions   to  \eqref{yu-7-9-3}
     with   initial value $\zeta_0$ at  initial time $s_0$. However, since $U^{s_0,\zeta_0}(s_0)\neq \widehat{U}^{s_0,\zeta_0}(s_0)$, it follows that $y^{s_0,\zeta_0}(s_0)\neq \widehat{y}^{s_0,\zeta_0}(s_0)$.
      This contradicts the uniqueness of the solution to \eqref{yu-7-9-3}, thereby proving that the solution to \eqref{yu-7-22-4} is unique.
\end{proof}

\vskip 5pt
    \textbf{Acknowledgments.} The authors would like to express their  appreciation to Dr. Tianxiao Wang of Sichuan University for his valuable comments and suggestions during the preparation of this manuscript.

\end{document}